\documentclass[11pt,a4paper,reqno]{amsart}%

\usepackage{amsfonts}
\usepackage{amsmath}
\usepackage{amssymb}
\usepackage{graphicx}%

\usepackage{tikz}

\usepackage[margin=20mm]{geometry}

\usepackage{multirow}
\usepackage{array}

\usepackage[colorlinks, linkcolor=blue,  citecolor=blue, urlcolor=blue, bookmarks=false]{hyperref}%
\hypersetup{pdfstartview=FitH}
\usepackage{url}
\usepackage{xspace}

\usepackage{dsfont}

\usepackage{verbatim}
\usepackage{alltt}
\usepackage{graphicx}
\graphicspath{{../figures/}{../figuresOld/}}

\newtheorem{theorem}{Theorem}

\newtheorem{assumption}{Assumption}

\newtheorem{corollary}{Corollary}

\newtheorem{definition}{Definition}

\newtheorem{lemma}{Lemma}

\newtheorem{proposition}{Proposition}
\newtheorem{remark}{Remark}

\numberwithin{equation}{section}

\let\originalleft\left
\let\originalright\right
\renewcommand{\left}{\mathopen{}\mathclose\bgroup\originalleft}
\renewcommand{\right}{\aftergroup\egroup\originalright}

\renewcommand{\ref}[1]{\autoref{#1}}

\usepackage{enumitem}
\setlist[description]{align=right, labelwidth=2cm, leftmargin=!, font=\normalfont\bfseries}

\begin{document}
\title[Sharp conditioning of optimal transport]{Sharp conditioning and quantitative stability of optimal transport}
\author{Yuanlong Ruan}
\address{Beihang University, Beijing, China}

\subjclass[2020]{49Q22, 49J52, 52A41}
\keywords{optimal transport, quantitative stability, power-logarithm conditioning, sharp error bound, non-compact support}

\date{\today}

\begin{abstract}
When a feasible plan is obtained whose quadratic transport cost is known to be close to the optimal cost, we try to determine how close the feasible plan is to the true optimal map under mild density and moment conditions. The source and target may be unbounded or have non-compact supports. We show that for a source with density in $L^p$ and an $n$-th moment, let
$s=1-1/p$ and $\eta=sn/[n+(d-1)s]$.
Under the target constraint
$\int |y|^m[\log(e+|y|)]^\beta\,d\nu\leqslant1$,
the worst-case class-uniform conditioning has a sharp modulus comparable to
\[
 t^{\eta(m-2)/[m(1+\eta)-\eta]}
 [\log(e/t)]^{-\beta(2+\eta)/[m(1+\eta)-\eta]},
\]
whenever $t>0$ is small. This holds for $m\geqslant2$ and $\beta\geqslant0$. When $m=2$, every $\beta>0$ gives a sharp logarithmic modulus, whereas the $m=2,\,\beta=0$ extreme has no
vanishing modulus. The matching lower bound is verified for both the squared map error and barycentric projection error. The sharp map error bounds allow us to directly derive quantitative stabilities of common source Brenier maps without passing through potential estimates, thereby avoiding loss of information.  These stabilities strictly improve the corresponding results of Delalande-Merigot \cite{delalande2023quantitative} and Letrouit-Mérigot \cite{letrouit2026gluing} under identical or weaker settings, particularly no convexity of the source support or bounds of the source density are assumed.
\end{abstract}

\maketitle
\setcounter{tocdepth}{1}
\tableofcontents

\section{Introduction and main results}

Quantitative stability of optimal transport has attracted a substantial effort
in the literature with seminal works
\cite{berman2021convergence,merigot2020quantitative,delalande2023quantitative,carlier2025quantitative,merigot2026sharp}
and in Wasserstein barycenter stability
\cite{carlier2024quantitative,carlier2026quantitative}. The settings are
mostly for compactly supported source, convex source support or a combination
of the two. Unbounded source support is considered recently, e.g.,
\cite{letrouit2026unstable}, which also extends the strong convexity theory of
\cite{delalande2023quantitative} to more general domains. Sources are treated
through variance inequalities and gluing. Extension to general cost
\cite{mischler2026quantitative} is also studied. If more regularities are
assumed, stronger stabilities are anticipated. For example
\cite{gallouet2026strong} assumes strong $c$-concavity with general costs,
\cite{balakrishnan2026stability} treats smooth map stability and statistical
consequences, \cite{caja2026stability} proves Lipschitz stability for
nondegenerate H\"{o}lder densities on smooth uniformly convex domains, to name
a few.

Our focus in this article is to provide a matched conditioning law under
simultaneous density and moment constraints and its power-logarithm case,
including rough and singlar targets. It is designed to answer the question: if
an approximate feasible plan is obtained whose transport cost is known to be
close to the ground true within a given tolerance and an approximate optimal
map is constructed, then what we can say about how close the constructed map
is to the true optimal map? The answer to this question will have its own
numerical benefits, but it will also allow us to treat the quantitative
stability of optimal transport maps without touching the potentials. As
implied by the recent work \cite{merigot2026sharp}, we believe going from
potential to map could cause loss of information if the goal is stability of
Brenier map itself.

\subsection{Problem setting}

Let $\rho,$ $\nu\in\mathcal{P}_{2}\left(  \mathbb{R}^{d}\right)  $. When
$\rho$ has a density, write $T_{\nu}$ for the unqiue Brenier map for the
quadratic optimal transport. For a feasible plan $\sigma\in\Pi\left(  \rho
,\nu\right)  $, let%
\begin{equation}
\mathcal{E}_{\nu}\left(  \sigma\right)  =\int\left\vert y-T_{\nu}\left(
x\right)  \right\vert ^{2}d\sigma\left(  x,y\right)  , \label{eq:Eintro}%
\end{equation}%
\begin{equation}
\mathcal{D}_{\nu}\left(  \sigma\right)  =\frac{1}{2}\int\left\vert
x-y\right\vert ^{2}d\sigma\left(  x,y\right)  -\frac{1}{2}W_{2}^{2}\left(
\rho,\nu\right)  . \label{eq:Dintro}%
\end{equation}
The first quantity $\left(  \ref{eq:Eintro}\right)  $, the \textit{squared map
error}, measures the error of the plan $\sigma$ relative to the map $T_{\nu}$
at the same source point. The second quantity $\mathcal{D}_{\nu}\left(
\sigma\right)  $ is the \textit{objective/cost gap}, the difference between
the cost of an approximate plan $\sigma$ and the optimal cost $\frac{1}%
{2}W_{2}^{2}\left(  \rho,\nu\right)  $.

If $\sigma=\left(  \operatorname{Id},S\right)  _{\#}\rho$, not necessarily in
$\Pi\left(  \rho,\nu\right)  ,$ then
\begin{equation}
\mathcal{E}_{\nu}\left(  \sigma\right)  =\int\left\vert S\left(  x\right)
-T_{\nu}\left(  x\right)  \right\vert ^{2}d\rho\left(  x\right)  =\left\Vert
S-T_{\nu}\right\Vert _{L^{2}\left(  \rho\right)  }^{2}. \label{eq:Eintro-map}%
\end{equation}
A general plan $\sigma$ may split mass. Barycentric projection
\cite[Definition 5.4.2]{ambrosio2005gradient}\cite{oberman2015efficient} is
the usual way to reconstruct an approximate map. Write $\sigma\left(
dx,dy\right)  =\rho\left(  dx\right)  \sigma_{x}\left(  dy\right)  $, where
$\sigma_{x}$ is the conditional kernel. The barycentric projection is defined
for $\rho$-a.e. $x,$
\[
\bar{T}_{\sigma}\left(  x\right)  =\int yd\sigma_{x}\left(  y\right)  .
\]
If we write
\[
V_{\sigma}=\int\left\vert y-\bar{T}_{\sigma}\left(  x\right)  \right\vert
^{2}d\sigma\left(  x,y\right)  ,
\]
then
\begin{equation}
\mathcal{E}_{\nu}\left(  \sigma\right)  =\left\Vert \bar{T}_{\sigma}-T_{\nu
}\right\Vert _{L^{2}\left(  \rho\right)  }^{2}+V_{\sigma}.
\label{eq:variance-decomposition}%
\end{equation}
The cross term vanishes because $\int\left(  y-\bar{T}_{\sigma}\left(
x\right)  \right)  d\sigma_{x}\left(  y\right)  =0$. The barycentric map
$\bar{T}_{\sigma}$ needs not push $\rho$ to $\nu$.

We ask how accurately a small objective gap determines the optimal map,
uniformly over a specified class of marginals. This is a quantitative
stability question in practice: does a nearly optimal feasible plan
approximate the optimal map, and at what rate?

There are two aspects of this question. The first is \textit{conditioning}
which views the question as the conversion of objective gap into squared map
error, i.e. a bound of the form%
\[
\mathcal{D}_{\nu}\left(  \sigma\right)  \leqslant t\implies\mathcal{E}_{\nu
}\left(  \sigma\right)  \leqslant\omega\left(  t\right)  ,
\]
where $\omega$ is a conditioning modulus. Its size measures how much map error
a given objective tolerance permits under required assumptions. It provides a
sufficient upper error modulus. For a fixed class $\mathcal{C}$ of probability
marginals, define
\begin{equation}
\Omega_{\mathcal{C}}\left(  t\right)  =\left\{  \mathcal{E}_{\nu}\left(
\sigma\right)  :\left(  \rho,\nu\right)  \in\mathcal{C},\text{ }\sigma\in
\Pi\left(  \rho,\nu\right)  ,\text{ }\mathcal{D}_{\nu}\left(  \sigma\right)
\leqslant t\right\}  . \label{eq:class-modulusIntro}%
\end{equation}
This is the smallest class-uniform conditioning modulus. An upper bound
$\Omega_{\mathcal{C}}\left(  t\right)  \leqslant C\varphi\left(  t\right)  $
answers how small a cost tolerance is sufficient for desired solution
accuracy. A matching lower bound $\Omega_{\mathcal{C}}\left(  t\right)
\geqslant c\varphi\left(  t\right)  $ answers whether a uniformly better
conversion can follow from the same requirement. Together they describe the
intrinsic worst-case information provided by the cost gap. This is a
\textit{class-uniform stability,} $\Omega_{\mathcal{C}}\left(  t\right)
\rightarrow0$ as $t\rightarrow0.$ By $\left(  \ref{eq:variance-decomposition}%
\right)  $, it also justifies the barycentric reconstruction.

The other aspect is data perturbation: how does the optimal map change when a
marginal changes? The two aspects are closedly related but not the same.
Uniform stability implies indivisual stability, but the converse does not
hold. Failure of uniform stability does not rule out stability for a fixed
pair of marginals.

\subsection{Main results and organization}

\emph{The structural distribution-adaptive estimate}\textit{.} Under the
Brenier setting of Assumption \ref{ass:brenier}, Section \ref{sec:error-bound}
proves the structural estimate for an arbitrary plan $\sigma\in\Pi\left(
\rho,\nu\right)  ,$%
\begin{equation}
\mathcal{E}_{\nu}(\sigma)\leqslant\inf_{b\in\mathbb{R}^{d},\,R,h>0}\left\{
4R\frac{\mathcal{D}_{\nu}(\sigma)}{h}+18R^{2}\mathfrak{A}_{\rho}%
(h)+8\tau_{2,\nu,b}(R)\right\}  . \label{eq:error-intro}%
\end{equation}
Here $\mathfrak{A}_{\rho}$ measures squared subgradient oscillation of convex
$1$-Lipschitz functions and $\tau_{2,\nu,b}$ is the ordinary squared target
tail. The information of the source measure enters only $\mathfrak{A}_{\rho}$
and is processed through the\ elegent geometric estimates from
\cite{carlier2023fenchel,carlier2024quantitative}. The estimate is also
inspired by the recent work \cite{merigot2026sharp}.\ One advantage of the
structural estimate is it allows us to treat source and target separately,
giving distribution-adaptive error bound. Moreover, since we directly addresss
the stability of the maps, more information is retained. Definitions and
concrete controls appear in Sections~\ref{sec:profiles} -
\ref{sec:source-profile}.

\emph{Source estimates and power laws}. Under appropriate source density
conditions, $\rho=fdx,$ $f\in L^{p}\left(  \mathbb{R}^{d}\right)  ,$
$1<p\leqslant\infty$ and $s=1-1/p,$ Theorem~\ref{thm:power} and
Theorem~\ref{thm:unbounded-power} combine subgradient oscillation estimates
with an $m$-th target moment, $m>2$ to build power laws for $\mathcal{E}_{\nu
}(\sigma).$ For bounded densities and compact source and target,
Corollary~\ref{cor:compact-target} gives
\[
\mathcal{E}_{\nu}(\sigma)\leqslant\operatorname*{Const}\cdot\mathcal{D}_{\nu
}(\sigma)^{s/\left(  1+s\right)  },\text{ }0<s\leqslant1.
\]
When $p=\infty$, $s=1,$ this is the squared map error estimate of M\'{e}rigot
\cite[Theorem 3.4 (3.15)]{merigot2026sharp}, for an arbitrary feasible plan
with the same marginals as the reference optimal map. The correlation gap
there equals $\left(  \ref{eq:Dintro}\right)  $. The constructions in
Section~\ref{sec:sharpness} confirms that a matching class-uniform modulus is
not available in the extreme cases, including the unbounded density and $m=2$
cases. For a prescribed $L^{1}$ density the source profile still vanishes.

\emph{The power-logarithm conditioning law}\textit{. }The pure power law is
established in Theorem \ref{thm:unbounded-power} with a \textit{matching lower
bound} verified in Theorem \ref{thm:escaping-slabs}, and consolidated in
Theorem \ref{thm:class-law} when $m\geqslant2,$ $\beta=0.$ With%
\[
\eta=\frac{sn}{n+(d-1)s},\text{ }H=m(1+\eta)-\eta,
\]
the pure power law is, for small $t>0,$%
\[
\Omega_{\mathcal{C}_{m,0}}\left(  t\right)  \simeq t^{\eta\left(  m-1\right)
/H},
\]
for the fixed class $\mathcal{C}_{m,0}$ defined in Section \ref{sec:class-law}%
. Theorem \ref{thm:class-law} in Section~\ref{sec:class-law} extends it to the
logarithmically weighted target moment class $\mathcal{C}_{m,\beta},$ giving
the \textit{power-logarithm law}%
\[
\Omega_{\mathcal{C}_{m,\beta}}\left(  t\right)  \simeq t^{\eta\left(
m-1\right)  /H}\left[  \log\left(  e/t\right)  \right]  ^{-\beta\left(
2+\eta\right)  /H}.
\]

\emph{Brenier map stabilities with common source}. Section~\ref{sec:two-map}
introduces a repairing process that constructs an optimal plan through a
coupling of two targets. The objective gap of the repaired plan is then
controlled by centered source moments or integrated source tails.

For compact source and target, Theorem~\ref{thm:compact-maps} gives
\[
\Vert T_{\mu}-T_{\nu}\Vert_{L^{2}(\rho)}\leqslant\operatorname*{Const}\cdot
W_{1}(\mu,\nu)^{s/[2(1+s)]}.
\]
At $p=\infty$, $s=1,$ this recovers M\'{e}rigot's $1/4$ power \cite[Theorem
3.1]{merigot2026sharp}. It is also the common source specialization of
\cite[Theorem 4.6]{ford2026quantitative}. Here the error is an $L^{2}$ norm,
not its square, and the input is the distance between two targets, not a
feasible plan's cost gap. The passage between these problems requires the
repairing argument in the proof of Theorem~\ref{thm:compact-maps}, not just
taking a square root.

Theorem~\ref{thm:both-unbounded} gives the corresponding map bound for
unbounded non-compact sources and targets under its stated moment assumptions,%
\[
\Vert T_{\mu}-T_{\nu}\Vert_{L^{2}(\rho)}\leqslant\operatorname*{Const}\cdot
W_{1}(\mu,\nu)^{\alpha\Theta_{\eta,m}/2}.
\]
And in Section \ref{sec:sharp-1D}, we discuss the sharpness of the exponent.

The flexible structural estimate $\left(  \ref{eq:error-intro}\right)  $
permits the combinations of various source and target assumptions, each
combination may lead to a dedicated quantiative stability which exploits
specific properties of a given family of marginals thus gives stronger
stability than the general theorem would provide, and the sharp conditioning
can also give refined stability. But we have refrained from exhausting the
combinations and focus instead on combinations of moments and compactness of supports.

Section \ref{sec:DM-noncompact}, Section~\ref{sec:LM-comparison},
Section~\ref{sec:gap-map-error} are devoted to comparisons, particularly
showing dedicated quantiative stabilities for specific probability classes.
These sections may be read as applications of the main results. The main part
of the article does not depend on these materials.

\emph{Further comparisons with prior works and strict improvements}. For
targets in a fixed ball and $\delta=W_{1}(\mu,\nu)$, we prove in
Section~\ref{sec:source-envelopes}, for source with Gaussian upper envelope
(Theorem \ref{thm:logconcave}), such as the log-concave family, that%
\[
\left\Vert T_{\mu}-T_{\nu}\right\Vert _{L^{2}(\rho)}\leqslant
\operatorname*{Const}\cdot\delta^{1/4}\left[  \log\left(  c/\delta\right)
\right]  ^{1/8},\text{ \ if }f\left(  x\right)  \leqslant C_{0}e^{-\kappa
\left\vert x-a\right\vert ^{2}/2},
\]
and for source with polynomial upper envelope (Theorem \ref{thm:cauchy}), that%
\[
\left\Vert T_{\mu}-T_{\nu}\right\Vert _{L^{2}(\rho)}\leqslant
\operatorname*{Const}\cdot\delta^{\left(  \beta-d-1\right)  /\left(  4\left(
\beta-d\right)  \right)  },\text{ \ if }f\left(  x\right)  \leqslant
C_{0}\left(  1+\left\vert x-a\right\vert ^{2}\right)  ^{-\beta/2},\text{
}\beta>d+2.
\]
On common hypotheses these improve the map exponents in \cite[Theorems 1.4 and
1.12]{letrouit2026gluing}. The first also avoids an upper Hessian bound, and
the second avoids a density lower bound. Precise comparisons are given after
the corresponding proofs.

Section \ref{sec:DM-noncompact} gives detailed comparison with \cite[Section
4]{delalande2023quantitative} on Brenier map stability between compact source
and noncompact target. Section \ref{sec:LM-comparison} is devoted to
comparison with the corresponding results of \cite{letrouit2026gluing} on
compact source and target under various source density assumptions. Our
Brenier map stability results give strict improvements in all these
comparisons under identical or weaker assumptions.

\emph{From objective gap to reconstructed maps}.
Section~\ref{sec:gap-map-error} is for applications. It treats the
verification of solution accuracy from primal-dual gap, and gives barycentric
map error bounds under explicit accuracy requirements. It is worth noting in
passing that the map error bound and the conditioning law could also be used
in regularized settings, such as verification of the accuracy of entropic map
\cite{pooladian2021entropic}, or barycentric map constructed from
divergence-regularized optimal transport plan \cite{eckstein2024convergence}.
The theory will tell us when to stop a solver to reach a desired solution accuracy.

\begin{remark}
Throughout the article, when compactly supported probability measures are
defined, we use closed balls such as $B\left(  a,R_{X}\right)  ,$ $B\left(
b,R_{Y}\right)  $ to indicate where their supports belong. But no convexity of
these underlying sets are needed.
\end{remark}

\section{Preliminaries and assumptions}

\label{sec:prelim}

\subsection{Notations}

We use $\left\vert \cdot\right\vert $ to denote the Euclidean norm, and
$B(a,R)$ is the \textbf{closed} ball with center $a$ and radius $R$.
$\omega_{d}$ is the volume of the unit ball in $\mathbb{R}^{d}$,
$d\geqslant1.$

$\mathcal{P}\left(  \mathbb{R}^{d}\right)  $ denotes probability measures on
$\mathbb{R}^{d}$. $\mathcal{P}_{2}\left(  \mathbb{R}^{d}\right)  $ denotes
probability measures with finite second moment. If a probability has a
density, then we write e.g. $\rho=fdx$ or $\rho\ll dx.$ For $\mu\in
\mathcal{P}\left(  \mathbb{R}^{d}\right)  ,$ $r>0,$ $b\in\mathbb{R}^{d}$,
define the $r$-th probability norm with center $b$ and its minimized form
\[
M_{\mu,r}\left(  b\right)  =\left(  \int_{\mathbb{R}^{d}}\left\vert
y-b\right\vert ^{r}d\mu\left(  y\right)  \right)  ^{1/r},\text{ }M_{\mu
,r}=\inf_{b\in\mathbb{R}^{d}}M_{\mu,r}\left(  b\right)  .
\]
For $R>0,$ $k\geqslant0,$ define the $k$-th order tail integral
\[
\tau_{k,\mu,b}\left(  R\right)  =\int_{\left\vert y-b\right\vert >R}\left\vert
y-b\right\vert ^{k}d\mu\left(  y\right)  .
\]
In particular, we write $\tau_{\mu,b}\left(  R\right)  =\tau_{0,\mu,b}\left(
R\right)  =\mu\left(  \left\vert y-b\right\vert >R\right)  $ for the $0$-th
tail integral.

For convex $u$, $\operatorname{dom}u=\left\{  x:u\left(  x\right)
<+\infty\right\}  ,$ $\operatorname{dom}\partial u=\left\{  x:\partial
u\left(  x\right)  \text{ is non-empty}\right\}  .$ For a set $C\subset
\mathbb{R}^{d}$, $I_{C}$ denotes its indicator, $I_{C}\left(  x\right)  =0$ on
$C,$ while $I_{C}\left(  x\right)  =+\infty$ otherwise.

We write $\log$ for the natural logarithm.

\subsection{Transport, potentials and integrability}

The following is the standing assumption that holds throughout the paper.%

\begin{assumption}[\textbf{Brenier setting}]%
\label{ass:brenier}The Borel probability measures $\rho,$ $\nu$ belong to
$\mathcal{P}_{2}\left(  \mathbb{R}^{d}\right)  $. There is a proper lower
semicontinuous convex function $u:\mathbb{R}^{d}\mapsto\left(  -\infty
,+\infty\right]  $ such that the unique optimal plan from $\rho$ to $\nu$ is
$\left(  \operatorname{Id},T\right)  _{\#}\rho$, where $T=\nabla u,$ $\rho
$-a.e. The potential is chosen so that%
\[
u\in L^{1}\left(  \rho\right)  ,\text{ }u^{\ast}\in L^{1}\left(  \nu\right)
.
\]
%

\end{assumption}%

If $\rho\ll dx$, this assumption follows from Brenier's theorem and quadratic
Kantorovich duality
\cite{brenier1991polar,villani2003topics,villani2009optimal}. Some singular
sources are also admissible if they do not give mass to small sets of
Hausdorff dimension at most $d-1$, but we retain Assumption~\ref{ass:brenier}
whenever singular sources are discussed \cite{mccann1995existence}.

In general for a proper convex $u,$ $\operatorname{dom}\partial u\subset
\operatorname{dom}u$ \cite[Theorems 23.4]{rockafellar1997convex}$.$ Assumption
\ref{ass:brenier} implies that%
\begin{equation}
\rho\left(  x:\partial u\left(  x\right)  =\left\{  \nabla u\right\}  \right)
=\rho\left(  \operatorname{dom}\partial u\right)  =\rho\left(
\operatorname{dom}u\right)  =1. \label{eq:Rho-1}%
\end{equation}
For a lower semicontinuous convex $v:\mathbb{R}^{d}\mapsto\left(
-\infty,+\infty\right]  $, its conjugate and subdifferential are%
\[
v^{\ast}(p)=\sup_{x\in\mathbb{R}^{d}}\left\{  x\cdot p-v\left(  x\right)
\right\}  ,
\]%
\[
\partial v\left(  x\right)  =\left\{  p\in\mathbb{R}^{d}:v\left(  z\right)
\geqslant v\left(  x\right)  +p\cdot\left(  z-x\right)  ,\ \forall
z\in\mathbb{R}^{d}\right\}  .
\]
Following \cite{carlier2023fenchel}, for a proper lower semicontinuous convex
$v$, we define its Fenchel gap by%
\begin{equation}
g_{v}\left(  x,p\right)  \triangleq v\left(  x\right)  +v^{\ast}\left(
p\right)  -x\cdot p\geqslant0. \label{eq:fenchel-gap}%
\end{equation}
The equality holds if and only if $p\in\partial v(x)$.

\subsection{Lifting a feasible plan}

\begin{lemma}
[Self-coupling lift]\label{lem:lift}Assume Assumption~\ref{ass:brenier} and
let $\sigma\in\Pi(\rho,\nu)$. There is a self-coupling $q\in\Pi(\rho,\rho)$
such that
\begin{equation}
\mathcal{E}_{\nu}\left(  \sigma\right)  =\int\left\vert T\left(  x^{\prime
}\right)  -T\left(  x\right)  \right\vert ^{2}dq\left(  x,x^{\prime}\right)  ,
\label{eq:E-lift}%
\end{equation}%
\begin{equation}
\mathcal{D}_{\nu}\left(  \sigma\right)  =\int x\cdot\left(  T\left(  x\right)
-T\left(  x^{\prime}\right)  \right)  dq\left(  x,x^{\prime}\right)  ,
\label{eq:D-corr}%
\end{equation}%
\begin{equation}
\mathcal{D}_{\nu}\left(  \sigma\right)  =\int g_{v}\left(  x,T\left(
x^{\prime}\right)  \right)  dq\left(  x,x^{\prime}\right)  . \label{eq:D-gap}%
\end{equation}

\end{lemma}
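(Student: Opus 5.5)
The plan is to build $q$ by gluing $\sigma$ with the optimal plan along the common $\nu$-marginal. Then each of the three identities follows from expanding squares and using that both marginals of $q$ equal $\rho$. Throughout, $v$ in \eqref{eq:D-gap} is the Brenier potential $u$ of Assumption~\ref{ass:brenier}, so $T=\nabla u$ $\rho$-a.e.

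\emph{Construction of $q$.} Let $\pi=(\operatorname{Id},T)_{\#}\rho\in\Pi(\rho,\nu)$ be the optimal plan. Both $\sigma(dx,dy)$ and $\pi(dx',dy)$ have second marginal $\nu$. By the gluing lemma (disintegrate both with respect to $y$ and take the conditional product), there is a probability $\gamma$ on $\mathbb{R}^{d}\times\mathbb{R}^{d}\times\mathbb{R}^{d}$ in the variables $(x,y,x')$ such that the law of $(x,y)$ under $\gamma$ is $\sigma$ and the law of $(x',y)$ under $\gamma$ is $\pi$. Since $\pi$ is concentrated on the graph of $T$, we have $y=T(x')$ for $\gamma$-a.e. $(x,y,x')$. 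Define $q$ as the law of $(x,x')$ under $\gamma$; its two marginals are both $\rho$, so $q\in\Pi(\rho,\rho)$. Every $\sigma$-integral of a function of $(x,y)$ then becomes a $q$-integral after substituting $y=T(x')$.

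\emph{Identities \eqref{eq:E-lift} and \eqref{eq:D-corr}.} Formula \eqref{eq:E-lift} is immediate from this substitution in the definition \eqref{eq:Eintro}. For \eqref{eq:D-corr}, write
\[
\mathcal{D}_{\nu}(\sigma)=\tfrac12\int|x-y|^{2}d\sigma-\tfrac12\int|x'-T(x')|^{2}d\rho(x').
\]
Expand both squares. The $\tfrac12|x|^{2}$ terms cancel because both source marginals are $\rho$, and the $\tfrac12|y|^{2}$ terms cancel because both target marginals are $\nu$; all of these are finite since $\rho,\nu\in\mathcal{P}_{2}$. This leaves
\[
\mathcal{D}_{\nu}(\sigma)=\int x'\cdot T(x')\,dq-\int x\cdot T(x')\,dq .
\]
The cross terms are integrable by Cauchy--Schwarz. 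Because the second marginal of $q$ is $\rho$, we have $\int x'\cdot T(x')\,dq=\int x\cdot T(x)\,d\rho=\int x\cdot T(x)\,dq$, which gives \eqref{eq:D-corr}.

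\emph{Identity \eqref{eq:D-gap}.} Integrate the Fenchel gap $g_u(x,T(x'))=u(x)+u^{*}(T(x'))-x\cdot T(x')$ against $q$. The first term gives $\int u\,d\rho$. The second gives $\int u^{*}\,d\nu$, since $T_{\#}\rho=\nu$. Both integrals are finite by Assumption~\ref{ass:brenier}. By \eqref{eq:Rho-1}, $T(x)\in\partial u(x)$ for $\rho$-a.e. $x$, so the Fenchel equality holds there:
\[
\int u\,d\rho+\int u^{*}\,d\nu=\int\bigl(u(x)+u^{*}(T(x))\bigr)d\rho=\int x\cdot T(x)\,d\rho .
\]
Subtracting $\int x\cdot T(x')\,dq$ and comparing with the previous step gives \eqref{eq:D-gap}.

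\emph{Main obstacle.} The only delicate point is bookkeeping rather than any estimate. One must justify the gluing with Borel disintegrations, and one must ensure every integral is finite so that the integrals may be split. The $L^{1}$ conditions on $u$ and $u^{*}$ in Assumption~\ref{ass:brenier} are used exactly to make that splitting legitimate, while the nonnegativity of $g_u$ guarantees the final integral is well defined.
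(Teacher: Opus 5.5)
Your proposal is correct and follows essentially the same route as the paper. Gluing $\sigma$ with $(\operatorname{Id},T)_{\#}\rho$ along $y$ is exactly the paper's triple law $\Lambda(dx,dy,dx')=\sigma(dx,dy)\,\kappa_y(dx')$, obtained by disintegrating $\rho$ along $T$. The three identities then come from the same substitution $y=T(x')$, the same cancellation of marginal terms, and the Fenchel equality $u(x')+u^{*}(T(x'))=x'\cdot T(x')$ on the diagonal, with integrability supplied by $u\in L^{1}(\rho)$, $u^{*}\in L^{1}(\nu)$ and $\rho,\nu\in\mathcal{P}_2$.
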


\begin{proof}
By disintegration \cite[Theorem 5.3.1]{ambrosio2005gradient}, there is a
measurable probability kernel $y\mapsto\kappa_{y}$ which disintegrates $\rho$
w.r.t. the Brenier map $T$,
\[
\rho\left(  A\right)  =\int\kappa_{y}\left(  A\right)  d\nu\left(  y\right)
,
\]
and%
\begin{equation}
\kappa_{y}\left(  \left\{  x^{\prime}:T\left(  x^{\prime}\right)  =y\right\}
\right)  =1\text{, }\nu\text{-a.e. }y. \label{eq:Ky1}%
\end{equation}
Define%
\[
\Lambda\left(  dx,dy,dx^{\prime}\right)  =\sigma\left(  dx,dy\right)
\kappa_{y}\left(  dx^{\prime}\right)  .
\]
and let $q$ be the $\left(  x,x^{\prime}\right)  $-marginal of $\Lambda$. The
first marginal of $q$ is the first marginal of $\sigma$, namely $\rho$, and
its second marginal is%
\[
\int\kappa_{y}\left(  dx^{\prime}\right)  d\nu\left(  y\right)  =\rho\left(
dx^{\prime}\right)  .
\]
Thus $q\in\Pi\left(  \rho,\rho\right)  $. Since $T(x^{\prime})=y$ for
$\Lambda$-a.e. triple $\left(  x,y,x^{\prime}\right)  $ by $\left(
\ref{eq:Ky1}\right)  $, the map error can be rewritten as%
\[
\int\left\vert y-T\left(  x\right)  \right\vert ^{2}d\sigma\left(  x,y\right)
=\int\left\vert y-T\left(  x\right)  \right\vert ^{2}d\Lambda\left(
x,y,x^{\prime}\right)  =\int\left\vert T\left(  x^{\prime}\right)  -T\left(
x\right)  \right\vert ^{2}dq\left(  x,x^{\prime}\right)  .
\]
This proves $\left(  \ref{eq:E-lift}\right)  $.

Expanding the difference $\left(  \ref{eq:Dintro}\right)  $\ of costs gives%
\begin{align*}
\mathcal{D}_{\nu}\left(  \sigma\right)   &  =\frac{1}{2}\int\left\vert
x-y\right\vert ^{2}d\sigma\left(  x,y\right)  -\frac{1}{2}W_{2}^{2}\left(
\rho,\nu\right) \\
&  =\frac{1}{2}\int\left\vert x-y\right\vert ^{2}d\Lambda\left(
x,y,x^{\prime}\right)  -\frac{1}{2}\int\left\vert x-T\left(  x\right)
\right\vert ^{2}d\Lambda\left(  x,y,x^{\prime}\right) \\
&  =\frac{1}{2}\int\left\vert x-T(x^{\prime})\right\vert ^{2}d\Lambda\left(
x,y,x^{\prime}\right)  -\frac{1}{2}\int\left\vert x-T\left(  x\right)
\right\vert ^{2}d\Lambda\left(  x,y,x^{\prime}\right) \\
&  =\frac{1}{2}\int\left[  \left\vert x-T(x^{\prime})\right\vert
^{2}-\left\vert x-T\left(  x\right)  \right\vert ^{2}\right]  dq\left(
x,x^{\prime}\right) \\
&  =\int x\cdot\left(  T\left(  x\right)  -T\left(  x^{\prime}\right)
\right)  dq\left(  x,x^{\prime}\right)  +\int\left[  \left\vert T\left(
x^{\prime}\right)  \right\vert ^{2}-\left\vert T\left(  x\right)  \right\vert
^{2}\right]  dq\left(  x,x^{\prime}\right)  .
\end{align*}
The last integral is zero because both coordinates of $q$ have law $\rho$.
Hence $\left(  \ref{eq:D-corr}\right)  $ follows.

Since $T\left(  x^{\prime}\right)  \in\partial u\left(  x^{\prime}\right)  $,%
\[
u^{\ast}\left(  T\left(  x^{\prime}\right)  \right)  =x^{\prime}\cdot T\left(
x^{\prime}\right)  -u\left(  x^{\prime}\right)  .
\]
All terms are integrable under Assumption \ref{ass:brenier}. Hence%
\begin{align*}
\int g_{u}\left(  x,T\left(  x^{\prime}\right)  \right)  dq  &  =\int\left[
u\left(  x\right)  -u\left(  x^{\prime}\right)  -\left(  x-x^{\prime}\right)
\cdot T\left(  x^{\prime}\right)  \right]  dq\left(  x,x^{\prime}\right) \\
&  =\int x^{\prime}\cdot T\left(  x^{\prime}\right)  dq\left(  x,x^{\prime
}\right)  -\int x\cdot T\left(  x^{\prime}\right)  dq\left(  x,x^{\prime
}\right) \\
&  =\int x\cdot T\left(  x\right)  d\rho\left(  x\right)  -\int x\cdot
T\left(  x^{\prime}\right)  dq\left(  x,x^{\prime}\right) \\
&  =\int x\cdot T\left(  x\right)  dq\left(  x,x^{\prime}\right)  -\int x\cdot
T\left(  x^{\prime}\right)  dq\left(  x,x^{\prime}\right)
\end{align*}
where the second line uses equality of the two marginals of $q$ and the third
renames the integration variable. The result is $\left(  \ref{eq:D-corr}%
\right)  $. Therefore $\left(  \ref{eq:D-gap}\right)  $ is proved.
\end{proof}

\section{Intrinsic source and target profiles}

\label{sec:profiles}

Let $v:\mathbb{R}^{d}\mapsto\mathbb{R}$ be convex. For $h>0$, define its
subgradient oscillation around a point $x$ by
\begin{equation}
\omega_{v,h}\left(  x\right)  =\operatorname{diam}\left(  \bigcup_{z\in
B\left(  x,h\right)  }\partial v\left(  z\right)  \right)  . \label{eq:omega}%
\end{equation}
If $v$ is $L$-Lipschitz, all subgradient norms are bounded by $L,$ so%
\begin{equation}
0\leqslant\omega_{v,h}\left(  x\right)  \leqslant2L\text{ for all }x.
\label{eq:omega-2L}%
\end{equation}
The union in $\left(  \ref{eq:omega}\right)  $ is non-empty and compact.
Indeed, the subdifferential has closed graph and is locally bounded
\cite[Theorems 24.4 and 24.7]{rockafellar1997convex}. The correspondence
$x\twoheadrightarrow\bigcup_{z\in B\left(  x,h\right)  }\partial v\left(
z\right)  $ is upper hemicontinuous. Thus by Berge's maximum theorem
\cite[Lemma 17.30]{aliprantis2006infinite}, the maximum of $\left\vert
p-p^{\prime}\right\vert $ over the compact set of the pairs $\left(
z,p\right)  ,$ $\left(  z^{\prime},p^{\prime}\right)  $ with $z,$ $z^{\prime
}\in B\left(  x,h\right)  $ is upper semicontinuous in $x$. In particular it
is Borel measurable.

\begin{definition}
[Source oscillation profile]\label{def:A}Let $\rho\in\mathcal{P}_{2}\left(
\mathbb{R}^{d}\right)  $. Define
\begin{equation}
\mathfrak{A}_{\rho}\left(  h\right)  =\sup_{v}\int\omega_{v,h}^{2}\left(
x\right)  d\rho\left(  x\right)  , \label{eq:Adef}%
\end{equation}
where the supremum is over all convex $1$-Lipschitz functions $v:\mathbb{R}%
^{d}\mapsto\mathbb{R}$. An immediate bound from its definition is%
\begin{equation}
0\leqslant\mathfrak{A}_{\rho}\left(  h\right)  \leqslant4. \label{eq:A-univ}%
\end{equation}

\end{definition}

If $v$ is convex and $L$-Lipschitz, then
\begin{equation}
\int\omega_{v,h}^{2}\left(  x\right)  d\rho\left(  x\right)  \leqslant
L^{2}\mathfrak{A}_{\rho}\left(  h\right)  \leqslant4L^{2}. \label{eq:A-scale}%
\end{equation}

\begin{definition}
[Target tail profiles]\label{def:T}For $\nu\in\mathcal{P}_{2}\left(
\mathbb{R}^{d}\right)  $, $b\in\mathbb{R}^{d}$, $R>0$ and $t>0$, define%
\begin{equation}
\tau_{\nu,b}\left(  R\right)  =\nu\left(  \left\vert y-b\right\vert >R\right)
, \label{eq:Ftail}%
\end{equation}%
\begin{equation}
\tau_{2,\nu,b}\left(  R\right)  =\int_{\left\vert y-b\right\vert >R}\left\vert
y-b\right\vert ^{2}\,d\nu\left(  y\right)  , \label{eq:tau}%
\end{equation}
and%
\begin{equation}
\mathfrak{T}_{\nu,b}\left(  R\right)  =4\left[  \tau_{2,\nu,b}\left(
R\right)  +U_{\nu,b}\left(  \tau_{\nu,b}\left(  R\right)  \right)  \right]  ,
\label{eq:Tdef}%
\end{equation}
where for $t\geqslant0,$%
\begin{equation}
U_{\nu,b}\left(  t\right)  =\sup\left\{  \int\left\vert y-b\right\vert
^{2}\vartheta\left(  y\right)  d\nu\left(  y\right)  :0\leqslant
\vartheta\leqslant1,\ \int\vartheta d\nu\leqslant t\right\}  . \label{eq:U}%
\end{equation}

\end{definition}

\begin{remark}
Throughout the article we introduce centers to both source or target measures.
These centers help optimize the coefficients appearing in various estimates in
a distribution-adaptive manner.
\end{remark}

The function $\vartheta$ in $\left(  \ref{eq:U}\right)  $ acts as a selector
and is required for general $t$ when $\nu$ has atoms. In $\left(
\ref{eq:Tdef}\right)  $ the mass allowance of $U_{\nu,b}$ corresponds to
$t=\tau_{\nu,b}\left(  R\right)  $. In this case the tail profile is
simplified,
\begin{equation}
U_{\nu,b}\left(  \tau_{\nu,b}\left(  R\right)  \right)  =\tau_{2,\nu
,b}(R),\text{ }\mathfrak{T}_{\nu,b}\left(  R\right)  =8\tau_{2,\nu,b}\left(
R\right)  . \label{eq:tail-exact}%
\end{equation}
Indeed, let $Z=\left\vert y-b\right\vert ^{2}$, $A=\left\{  \left\vert
y-b\right\vert >R\right\}  =\{Z>R^{2}\}$ and $t=\nu(A)$. For every $\vartheta$
with $0\leqslant\vartheta\leqslant1$,%
\begin{align*}
\int Z\left(  \vartheta-1_{A}\right)  d\nu &  =\int\left(  Z-R^{2}\right)
\left(  \vartheta-1_{A}\right)  d\nu+R^{2}\left(  \int\vartheta d\nu-t\right)
\\
&  =\int_{A}\left(  Z-R^{2}\right)  \left(  \vartheta-1_{A}\right)  d\nu
+\int_{A^{c}}\left(  Z-R^{2}\right)  \left(  \vartheta-1_{A}\right)
d\nu+R^{2}\left(  \int\vartheta d\nu-t\right)  .
\end{align*}
Observe that all terms are nonpositive, and equal to zero if $\vartheta=1_{A}%
$. Thus $U_{\nu,b}\left(  \tau_{\nu,b}\left(  R\right)  \right)  $ is attained
by $\vartheta=1_{A}$ and $\left(  \ref{eq:tail-exact}\right)  $ follows.

\begin{lemma}
\label{lem:tail-vanish}If $\nu\in\mathcal{P}_{2}\left(  \mathbb{R}^{d}\right)
$, then%
\[
\lim_{R\rightarrow\infty}\mathfrak{T}_{\nu,b}\left(  R\right)  =0.
\]

\end{lemma}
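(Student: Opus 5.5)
The plan is to reduce the statement to the vanishing of the ordinary squared tail $\tau_{2,\nu,b}(R)$ and then apply dominated convergence. By the identity \eqref{eq:tail-exact}, established just above, the supremum defining $U_{\nu,b}$ at mass allowance $t=\tau_{\nu,b}(R)$ is attained by the selector $\vartheta=1_{\{|y-b|>R\}}$. Hence
\[
\mathfrak{T}_{\nu,b}(R)=8\,\tau_{2,\nu,b}(R)\quad\text{for every }R>0.
\]
So it suffices to show that $\tau_{2,\nu,b}(R)\to0$ as $R\to\infty$.

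First, $\nu\in\mathcal{P}_2(\mathbb{R}^d)$ means $\int|y|^2d\nu<\infty$. Since $|y-b|^2\leqslant 2|y|^2+2|b|^2$, the function $y\mapsto|y-b|^2$ belongs to $L^1(\nu)$ for every fixed $b\in\mathbb{R}^d$.

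Next, take any sequence $R_k\to\infty$ and consider
\[
\phi_k(y)=|y-b|^2\,1_{\{|y-b|>R_k\}}.
\]
Each $\phi_k$ is dominated by the integrable function $|y-b|^2$. Moreover, $\phi_k(y)\to0$ for every $y\in\mathbb{R}^d$, because the indicator vanishes as soon as $R_k\geqslant|y-b|$. Dominated convergence then gives $\tau_{2,\nu,b}(R_k)=\int\phi_k\,d\nu\to0$. Since $R\mapsto\tau_{2,\nu,b}(R)$ is nonincreasing, the limit along all $R\to\infty$ follows, and therefore $\mathfrak{T}_{\nu,b}(R)\to0$.

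There is no real obstacle here. The only point needing care is the use of \eqref{eq:tail-exact}, which legitimately replaces the supremum $U_{\nu,b}$ by the tail integral even when $\nu$ has atoms. If one preferred not to rely on it, the direct bound would be as follows. Any admissible $\vartheta$ satisfies $\int\vartheta\,d\nu\leqslant\tau_{\nu,b}(R)\to0$, and $|y-b|^2$ is uniformly integrable as a single $L^1(\nu)$ function, so absolute continuity of the integral gives $U_{\nu,b}(\tau_{\nu,b}(R))\to0$ as well.
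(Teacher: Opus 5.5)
Your proposal is correct and matches the paper's argument: dominated convergence with the integrable majorant $|y-b|^{2}$ gives $\tau_{2,\nu,b}(R)\to0$, and the exact identity $\mathfrak{T}_{\nu,b}(R)=8\tau_{2,\nu,b}(R)$, attained by the selector $\vartheta=1_{\{|y-b|>R\}}$, transfers this to $\mathfrak{T}_{\nu,b}$. Your extra details are all valid but not needed: integrability of $|y-b|^{2}$ for arbitrary $b$, monotonicity in $R$, and the alternative absolute-continuity bound on $U_{\nu,b}$.
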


\begin{proof}
Since the function $y\mapsto\left\vert y-b\right\vert ^{2}$ is $\nu
$-integrable, the dominated convergence theorem gives $\tau_{2,\nu
,b}(R)\rightarrow0$. The limit follows from the identity $\left(
\ref{eq:tail-exact}\right)  $: $\mathfrak{T}_{\nu,b}\left(  R\right)
=8\tau_{2,\nu,b}\left(  R\right)  $.
\end{proof}

\section{The proximal Fenchel estimate}

\label{sec:prox}

In the proof of Theorem~\ref{thm:error-bound} we will need a local slope with
required closeness to a given subgradient. The proximal map produces such a
slope. For $\lambda>0$, define
\[
\operatorname{prox}_{\lambda u}\left(  \xi\right)  \triangleq\arg\min
_{z\in\mathbb{R}^{d}}\left\{  u\left(  z\right)  +\frac{\left\vert
z-\xi\right\vert ^{2}}{2\lambda}\right\}  .
\]
The proximal operator is well-defined for proper lower semicontinuous convex
$u$ with unique minimizer for each $\xi.$ The first order condition of the
minimization gives%
\[
0\in\partial\left[  u\left(  z\right)  +\frac{\left\vert z-\xi\right\vert
^{2}}{2\lambda}\right]  .
\]
The two terms have non-empty intersection of domain relative interiors, since
the quadratic term has a domain relative interior of $\mathbb{R}^{d}.$ So
\[
0\in\partial u\left(  z\right)  +\frac{z-\xi}{\lambda}\text{ or equivalently
}\frac{\xi-z}{\lambda}\in\partial u\left(  z\right)  .
\]

We recall a useful property of the proximal map $\operatorname{prox}_{\lambda
u}\left(  \cdot\right)  $ that it is nonexpansive. Indeed, if $z_{i}%
=\operatorname{prox}_{\lambda u}\left(  \xi_{i}\right)  $, $i=1,2,$ set%
\[
q_{i}=\frac{\xi_{i}-z_{i}}{\lambda}\in\partial u\left(  z_{i}\right)  .
\]
Then by the cyclic monotonicity of the subdifferential we have $\left(
q_{1}-q_{2}\right)  \cdot\left(  z_{1}-z_{2}\right)  \geqslant0$, and hence
\[
\left\vert z_{1}-z_{2}\right\vert ^{2}=\left(  \xi_{1}-\xi_{2}\right)
\cdot\left(  z_{1}-z_{2}\right)  -\lambda\left(  q_{1}-q_{2}\right)
\cdot\left(  z_{1}-z_{2}\right)  \leqslant\left\vert \xi_{1}-\xi
_{2}\right\vert \left\vert z_{1}-z_{2}\right\vert .
\]
If $\left\vert z_{1}-z_{2}\right\vert $ is not zero, we obtain%
\[
\left\vert z_{1}-z_{2}\right\vert \leqslant\left\vert \xi_{1}-\xi
_{2}\right\vert .
\]
When $\left\vert z_{1}-z_{2}\right\vert =0,$ the inequality is trivial.
Therefore nonexpansiveness follows.

The next construction gives the local slope that we need later. The estimate
$\left(  \ref{eq:prox-gap}\right)  $, which is a kind of coercivity of the
Fenchel gap $\left(  \ref{eq:prox-gap}\right)  $, is proved in \cite[Lemma
1.1]{carlier2023fenchel}.

\begin{lemma}
[Pointwise proximal estimate]\label{lem:prox}Let $u$ be proper lower
semicontinuous and convex. For fixed $x,$ $x^{\prime}\in\operatorname*{dom}%
\partial u,$ suppose that $p\in\partial u\left(  x\right)  $, $r\in\partial
u\left(  x^{\prime}\right)  $, and $\left\vert p\right\vert ,$ $\left\vert
r\right\vert \leqslant R$. For $\lambda>0$, define
\begin{equation}
z=\operatorname{prox}_{\lambda u}\left(  x+\lambda r\right)  ,\text{
}v=r+\frac{x-z}{\lambda}. \label{eq:prox-v}%
\end{equation}
Then%
\begin{equation}
v\in\partial u\left(  z\right)  , \label{eq:prox-subgrad}%
\end{equation}%
\begin{equation}
\left\vert z-x\right\vert \leqslant2R\lambda, \label{eq:prox-distance}%
\end{equation}%
\begin{equation}
\left\vert v\right\vert \leqslant3R, \label{eq:v-bound}%
\end{equation}%
\begin{equation}
g_{u}\left(  x,r\right)  \geqslant\lambda\left\vert v-r\right\vert ^{2}.
\label{eq:prox-gap}%
\end{equation}

\end{lemma}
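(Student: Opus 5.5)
We prove the four claims in the order listed, using only the first-order condition for the proximal map, monotonicity of $\partial u$, and the Fenchel--Young equality on the graph of $\partial u$.

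\emph{Subgradient property.} Put $\xi=x+\lambda r$. The optimality condition recalled above gives $(\xi-z)/\lambda\in\partial u(z)$. Since $(\xi-z)/\lambda=r+(x-z)/\lambda=v$, this is exactly (\ref{eq:prox-subgrad}).

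\emph{Distance bound.} Compare $z$ with $x$ itself. Because $p\in\partial u(x)$, we have $x+\lambda p\mapsto x$ under $\operatorname{prox}_{\lambda u}$, i.e. $x=\operatorname{prox}_{\lambda u}(x+\lambda p)$. Nonexpansiveness then gives
\[
|z-x|\leqslant|(x+\lambda r)-(x+\lambda p)|=\lambda|r-p|\leqslant2R\lambda .
\]
This is (\ref{eq:prox-distance}). Consequently $|v|\leqslant|r|+|x-z|/\lambda\leqslant R+2R=3R$, which is (\ref{eq:v-bound}).

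\emph{Gap coercivity.} This step (\ref{eq:prox-gap}) is the heart of the lemma. The plan is to expand $g_u(x,r)$ and let monotonicity absorb everything except a square.

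First, the subgradient inequality at $z$ with slope $v$ gives $u(x)\geqslant u(z)+v\cdot(x-z)$. Next, since $r\in\partial u(x')$, Fenchel--Young equality gives $u^*(r)=x'\cdot r-u(x')$, and the subgradient inequality at $x'$ gives $u(z)\geqslant u(x')+r\cdot(z-x')$. Combining these,
\[
u(z)+u^*(r)\geqslant r\cdot z ,
\]
which is just Fenchel--Young applied at $(z,r)$, so the detour through $x'$ can be bypassed. Therefore
\[
g_u(x,r)\geqslant u(z)+v\cdot(x-z)+u^*(r)-x\cdot r\geqslant r\cdot z+v\cdot(x-z)-x\cdot r=(v-r)\cdot(x-z).
\]
Finally, $x-z=\lambda(v-r)$ by the definition of $v$, so the right-hand side equals $\lambda|v-r|^2$.

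The only delicate point is bookkeeping: all quantities are finite because $x,x'\in\operatorname{dom}\partial u$ and $z\in\operatorname{dom}\partial u$. No integrability is needed since the argument is pointwise.
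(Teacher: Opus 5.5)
Your proposal is correct and follows the paper's proof essentially step for step. The subgradient claim comes from the first-order condition of the prox. The distance and slope bounds come from nonexpansiveness compared against $x=\operatorname{prox}_{\lambda u}(x+\lambda p)$. The gap bound comes from the subgradient inequality at $(z,v)$ added to $u^*(r)\geqslant r\cdot z-u(z)$. That last inequality is exactly how the paper argues; your detour through $x'$ is harmless but unnecessary, as you yourself observe.
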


\begin{proof}
The inclusion $\left(  \ref{eq:prox-subgrad}\right)  $ follows from the
first-order condition%
\[
0\in\partial u\left(  z\right)  +\frac{z-\left(  x+\lambda r\right)  }%
{\lambda}\text{ or }\frac{\left(  x+\lambda r\right)  -z}{\lambda}\in\partial
u\left(  z\right)  .
\]
Let $\xi_{1}=x+\lambda r$ and $\xi_{2}=x+\lambda p.$ The definition $\left(
\ref{eq:prox-v}\right)  $ gives%
\[
z=\operatorname{prox}_{\lambda u}\left(  x+\lambda r\right)
=\operatorname{prox}_{\lambda u}\left(  \xi_{1}\right)  .
\]
Observe that the condition $p\in\partial u\left(  x\right)  $ is equivalent to%
\[
x=\operatorname{prox}_{\lambda u}\left(  x+\lambda p\right)
=\operatorname{prox}_{\lambda u}\left(  \xi_{2}\right)  .
\]
Applying the nonexpansiveness of the proximal map, we get
\[
\left\vert z-x\right\vert \leqslant\lambda\left\vert r-p\right\vert
\leqslant2R\lambda.
\]
This proves $\left(  \ref{eq:prox-distance}\right)  $. So it follows that
\[
\left\vert v\right\vert \leqslant\left\vert r\right\vert +\frac{\left\vert
x-z\right\vert }{\lambda}\leqslant3R,
\]
which proves $\left(  \ref{eq:v-bound}\right)  $. Using the subgradient
inequality at $\left(  z,v\right)  $ and the definition of the conjugate we
have
\[
u\left(  x\right)  \geqslant u\left(  z\right)  +v\cdot\left(  x-z\right)
\text{ and }u^{\ast}\left(  r\right)  \geqslant r\cdot z-u\left(  z\right)  .
\]
Adding them up gives%
\[
u\left(  x\right)  +u^{\ast}\left(  r\right)  \geqslant u\left(  z\right)
+v\cdot\left(  x-z\right)  +r\cdot z-u\left(  z\right)  .
\]
Subtracting $x\cdot r$ from both sides and using that $x-z=\lambda(v-r)$ we
obtain%
\[
g_{u}\left(  x,r\right)  \geqslant\left(  v-r\right)  \cdot\left(  x-z\right)
=\lambda\left\vert v-r\right\vert ^{2}.
\]
Therefore the proof is complete.
\end{proof}

The next is a simple way to truncate long-distance transports which we call a
slope-clipping principle. For convex functions, it has appeared in
\cite{carlier2025quantitative,merigot2026sharp}. See also
\cite{fathi2010optimal} for general handling.

\begin{lemma}
[Slope-clipping principle]\label{lem:slope-trunc}Under the hypotheses of
Lemma~\ref{lem:prox}, define%
\[
u_{R}=\left[  u^{\ast}+I_{B\left(  0,3R\right)  }\right]  ^{\ast}.
\]
The closed ball $B\left(  0,3R\right)  $ contains a point where $u^{\ast}$ is
finite so that $u_{R}$ is convex and $3R$-Lipschitz. Moreover,%
\[
p\in\partial u_{R}\left(  x\right)  ,\text{ }v\in\partial u_{R}\left(
z\right)  .
\]
and
\[
u_{R}\left(  x\right)  =u\left(  x\right)  ,\text{ }u_{R}\left(  z\right)
=u\left(  z\right)  .
\]

\end{lemma}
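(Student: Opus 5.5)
The proof will work on the conjugate side. Let $\phi = u^{\ast}+I_{B(0,3R)}$, so that $u_R=\phi^{\ast}$. The key facts are:
\begin{itemize}
\item $\phi$ is lower semicontinuous and convex, since it is a sum of two such functions.
\item $\phi$ is proper: $p\in\partial u(x)$ gives $u^{\ast}(p)=x\cdot p-u(x)<\infty$, and $|p|\leqslant R\leqslant 3R$, so $\phi(p)<\infty$. This also supplies the point in $B(0,3R)$ where $u^{\ast}$ is finite.
\item $\operatorname{dom}\phi\subset B(0,3R)$.
\end{itemize}
Hence
\[
u_R(\xi)=\sup_{|q|\leqslant 3R}\{\xi\cdot q-u^{\ast}(q)\}
\]
is a supremum of affine functions with slopes of norm at most $3R$. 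It is finite everywhere: it is bounded below by $\xi\cdot p-u^{\ast}(p)$, and bounded above because $u^{\ast}$ is lsc on a compact set and hence bounded below there. Therefore $u_R$ is convex and $3R$-Lipschitz. In the same way, $\partial u_R(\xi)\subset B(0,3R)$.

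Next I compare $u_R$ with $u$. Since $u^{\ast}\leqslant\phi$, we have $u_R=\phi^{\ast}\leqslant u^{\ast\ast}=u$ everywhere, by Fenchel–Moreau for proper lsc convex $u$. For the reverse inequality at $x$, take $q=p$ in the supremum:
\[
u_R(x)\geqslant x\cdot p-u^{\ast}(p)=u(x),
\]
using $p\in\partial u(x)$ and $|p|\leqslant 3R$. So $u_R(x)=u(x)$. At $z$, Lemma~\ref{lem:prox} gives $v\in\partial u(z)$ with $|v|\leqslant 3R$, so likewise
\[
u_R(z)\geqslant z\cdot v-u^{\ast}(v)=u(z),
\]
and hence $u_R(z)=u(z)$.

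For the subgradient claims, the goal is $p\in\partial u_R(x)$. For every $\xi$,
\[
u_R(\xi)\geqslant\xi\cdot p-u^{\ast}(p)=\xi\cdot p-\bigl(x\cdot p-u(x)\bigr)=u_R(x)+p\cdot(\xi-x).
\]
The same argument with $(z,v)$ gives $v\in\partial u_R(z)$.

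There is no serious obstacle here. The only delicate points are the properness of $\phi$, which is why the bound $|p|\leqslant R$ is needed, and the use of the biconjugate identity $u^{\ast\ast}=u$, which requires $u$ to be proper lsc convex as assumed. The bounds \eqref{eq:v-bound} and $|p|\leqslant R$ are exactly what allow both points to be touched by admissible slopes.
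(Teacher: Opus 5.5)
Your proof is correct and follows essentially the same route as the paper: you use $u_R\leqslant u^{\ast\ast}=u$ and test the supremum at the admissible slopes $p$ and $v$ to get $u_R=u$ at $x$ and $z$, and the same test gives both subgradient inclusions. The only cosmetic difference is the upper bound on $u_R$: you get it from lower semicontinuity of $u^{\ast}$ on the compact ball, while the paper uses the Fenchel--Young bound $u^{\ast}(s)\geqslant x\cdot s-u(x)\geqslant -3R|x|-u(x)$ on $B(0,3R)$ (the paper writes $+3R|x|$ there, a harmless sign slip).
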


\begin{proof}
\textbf{1}. According to the hypotheses of Lemma~\ref{lem:prox},
$x\in\operatorname*{dom}u,$ $p\in\partial u(x)$, $\left\vert p\right\vert
\leqslant R.$ Hence $u\left(  x\right)  $ is finite. Since the subgradient
condition gives $u^{\ast}\left(  p\right)  =x\cdot p-u\left(  x\right)  ,$ so%
\[
\left\vert u^{\ast}\left(  p\right)  \right\vert \leqslant\left\vert
x\right\vert \left\vert p\right\vert +\left\vert u\left(  x\right)
\right\vert <\infty.
\]
This says that $B\left(  0,3R\right)  $ contains point $p$ where $u^{\ast}$ is
finite. By the definition of conjugate
\[
u_{R}(\xi)=\sup_{s\in B(0,3R)}\left[  \xi\cdot s-u^{\ast}(s)\right]  .
\]
It is a supremum of affine functions with slopes of norm at most $3R$, and is
therefore convex and $3R$-Lipschitz wherever it it finite. The finiteness of
$u^{\ast}\left(  p\right)  \ $gives%
\[
u_{R}(\xi)\geqslant\xi\cdot p-u^{\ast}(p)>-\infty,\text{ }\forall\xi,
\]
showing that $u_{R}$ cannot take $-\infty$ anywhere. The finiteness of
$u\left(  x\right)  $ on the other hand gives a lower bound for $u^{\ast}$ on
the closed ball $B(0,3R),$
\[
u^{\ast}\left(  s\right)  \geqslant x\cdot s-u\left(  x\right)  \geqslant
3R\left\vert x\right\vert -u\left(  x\right)  ,\text{ }\forall s\in B(0,3R).
\]
Therefore $u_{R}(\xi)$ is bounded above pointwise. These combined tell us that
the truncated conjugate $u_{R}$ must be finite everywhere.

\textbf{2}. Fix $w\in\operatorname{dom}\partial u.$ If $s\in\partial u\left(
w\right)  $ and $\left\vert s\right\vert \leqslant3R$, the Fenchel equality
gives
\[
u_{R}\left(  w\right)  \geqslant w\cdot s-u^{\ast}\left(  s\right)  =u\left(
w\right)  .
\]
The inequality has used the definition of $u_{R}$ and the fact that $s\in
B(0,3R).$ The restricted supremum $u_{R}$ is bounded above by $u^{\ast\ast
}\left(  w\right)  $ which equals $u\left(  w\right)  $ because $u$ is lower
semicontinuous convex. Hence for such $w$ we have $u_{R}\left(  w\right)
=u\left(  w\right)  $. So for every $\xi$,
\[
u_{R}\left(  \xi\right)  \geqslant\xi\cdot s-u^{\ast}\left(  s\right)
=u_{R}\left(  w\right)  +s\cdot\left(  \xi-w\right)  ,
\]
indicating $s\in\partial u_{R}\left(  w\right)  $. With the assumptions on $p$
and $\left(  \ref{eq:v-bound}\right)  $, we apply the preceding to $\left(
w,s\right)  =\left(  x,p\right)  $ and $\left(  w,s\right)  =\left(
z,v\right)  $ to conclude that $p\in\partial u_{R}\left(  x\right)  ,$
$v\in\partial u_{R}\left(  z\right)  .$
\end{proof}

\section{The distribution-adaptive map error bound}

\label{sec:error-bound}

The next theorem is the structural result of the paper. It turns the Fenchel
gap into a map error bound by separating three contributions: the objective
gap, the source mass near convex singularities and target tail.

\begin{theorem}
[Map error bound]\label{thm:error-bound}Assume Assumption~\ref{ass:brenier}
and let $\sigma\in\Pi\left(  \rho,\nu\right)  $. Then
\begin{equation}
\mathcal{E}_{\nu}\left(  \sigma\right)  \leqslant\inf_{b\in\mathbb{R}%
^{d},R,h>0}\left\{  \frac{4R}{h}\mathcal{D}_{\nu}\left(  \sigma\right)
+18R^{2}\mathfrak{A}_{\rho}\left(  h\right)  +\mathfrak{T}_{\nu,b}\left(
R\right)  \right\}  . \label{eq:error-bound}%
\end{equation}

\end{theorem}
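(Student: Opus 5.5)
The plan is to work entirely on the self-coupling $q\in\Pi(\rho,\rho)$ of Lemma~\ref{lem:lift}. By (\ref{eq:E-lift}) and (\ref{eq:D-gap}),
\[
\mathcal{E}_{\nu}(\sigma)=\int|T(x')-T(x)|^{2}\,dq,\qquad
\mathcal{D}_{\nu}(\sigma)=\int g_{u}(x,T(x'))\,dq .
\]
We fix $b,R,h$ and split the $(x,x')$-space into a good set $G=\{|T(x)-b|\leqslant R,\ |T(x')-b|\leqslant R\}$ and its complement. On $G$ we use the proximal estimate. On $G^{c}$ we bound everything by target tails.

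\textbf{Centering.} First we reduce to $b=0$. Replace $u$ by $\tilde u(x)=u(x)-b\cdot x$, so that $\nabla\tilde u=T-b$ and $\tilde u^{\ast}(p)=u^{\ast}(p+b)$. A direct computation gives $g_{\tilde u}(x,p-b)=g_{u}(x,p)$, so the Fenchel gap is unchanged. Differences $T(x')-T(x)$ are also unchanged, and the target becomes the translate of $\nu$ by $-b$. Hence it suffices to treat $b=0$, with $|p|,|r|\leqslant R$ on $G$.

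\textbf{Good set.} Fix $(x,x')\in G$ with $x,x'$ in the full-measure set (\ref{eq:Rho-1}). Set $p=T(x)$, $r=T(x')$ and $\lambda=h/(2R)$, and apply Lemma~\ref{lem:prox}. This gives $z$ with $|z-x|\leqslant h$, i.e. $z\in B(x,h)$, and $v\in\partial u(z)$ with $|v|\leqslant 3R$. It also gives $\lambda|v-r|^{2}\leqslant g_{u}(x,r)$.

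Next, Lemma~\ref{lem:slope-trunc} gives $p\in\partial u_{R}(x)$ and $v\in\partial u_{R}(z)$ for $u_{R}=[u^{\ast}+I_{B(0,3R)}]^{\ast}$. The key point is that $u_{R}$ is a single fixed convex $3R$-Lipschitz function, independent of the pair $(x,x')$. So $|v-p|\leqslant\omega_{u_{R},h}(x)$, and
\[
|r-p|^{2}\leqslant 2|r-v|^{2}+2|v-p|^{2}\leqslant \frac{4R}{h}\,g_{u}(x,T(x'))+2\,\omega_{u_{R},h}^{2}(x).
\]
We integrate over $G$ with respect to $q$. Since $g_{u}\geqslant0$, the first term is at most $\frac{4R}{h}\mathcal{D}_{\nu}(\sigma)$. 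The first marginal of $q$ is $\rho$, so by (\ref{eq:A-scale}) with $L=3R$ the second term is at most $18R^{2}\mathfrak{A}_{\rho}(h)$.

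\textbf{Bad set.} On $G^{c}$ we write $|T(x')-T(x)|^{2}\leqslant 2|T(x)|^{2}+2|T(x')|^{2}$, and by symmetry of the marginals treat $\int_{G^{c}}|T(x)|^{2}\,dq$. We have $1_{G^{c}}\leqslant 1_{\{|T(x)|>R\}}+1_{\{|T(x')|>R\}}$. The first indicator contributes at most $\tau_{2,\nu,0}(R)$, because $T_{\#}\rho=\nu$. For the second indicator, define the selector
\[
\vartheta(y)=q\big(|T(x')|>R \,\big|\, T(x)=y\big)\in[0,1],
\]
obtained by disintegrating the push-forward of $q$ under $(x,x')\mapsto(T(x),T(x'))$. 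Then $\int\vartheta\,d\nu=\tau_{\nu,0}(R)$, and the contribution equals $\int|y|^{2}\vartheta\,d\nu\leqslant U_{\nu,0}(\tau_{\nu,0}(R))$. Combining, the bad set contributes at most $4[\tau_{2,\nu,0}(R)+U_{\nu,0}(\tau_{\nu,0}(R))]=\mathfrak{T}_{\nu,0}(R)$. Adding the two parts and taking the infimum over $b,R,h$ gives (\ref{eq:error-bound}).

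\textbf{Expected obstacle.} The delicate step is the bad set, specifically the cross term where $x$ is good but $x'$ is bad. Bounding it by $\tau_{2}$ alone fails, and the selector construction is exactly what produces the $U_{\nu,b}$ term. Measurability is a secondary issue. We need $(x,x')\mapsto\omega_{u_{R},h}(x)$ to be Borel, which holds by the upper semicontinuity noted after (\ref{eq:omega}), precisely because $u_{R}$ does not depend on $(x,x')$. We also need integrability of $g_{u}$, which holds under Assumption~\ref{ass:brenier}.
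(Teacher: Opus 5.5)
Your proposal is correct and is essentially the paper's proof: the same lift to $q\in\Pi(\rho,\rho)$ and the same centering $u-b\cdot x$. On the good set you use the same proximal point with $\lambda=h/(2R)$ together with the pair-independent truncation $[u^{\ast}+I_{B(0,3R)}]^{\ast}$, and on the bad set the same diagonal/cross split with a selector. One small correction to your closing remark: the cross term can in fact be bounded by $\tau_{2,\nu,b}(R)$ alone, since the paper's bathtub argument shows $U_{\nu,b}(\tau_{\nu,b}(R))=\tau_{2,\nu,b}(R)$, so $\mathfrak{T}_{\nu,b}(R)=8\tau_{2,\nu,b}(R)$; the selector is exactly the device that yields this.
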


\begin{proof}
\textbf{1}. Let $q\in\Pi\left(  \rho,\rho\right)  $ be the self-coupling from
Lemma~\ref{lem:lift}. Fix $b\in\mathbb{R}^{d}$ and define
\[
\bar{T}=T-b,\text{ }\bar{u}=u-b\cdot x.
\]
Note that $\partial\bar{u}=\partial u-b$ and hence $\bar{T}\in\partial\bar
{u},$ $\rho$-a.e. Since for $x^{\prime}\in\operatorname{dom}\partial\bar{u},$
\[
\left[  \bar{u}\right]  ^{\ast}\left(  \bar{T}\left(  x^{\prime}\right)
\right)  =u^{\ast}\left(  \bar{T}\left(  x^{\prime}\right)  +b\right)
=u^{\ast}\left(  T\left(  x^{\prime}\right)  \right)  ,
\]
the definition gives%
\begin{align*}
g_{\bar{u}}\left(  x,\bar{T}\left(  x^{\prime}\right)  \right)   &  =\bar
{u}\left(  x\right)  +\left[  \bar{u}\right]  ^{\ast}\left(  \bar{T}\left(
x^{\prime}\right)  \right)  -x\cdot\bar{T}\left(  x^{\prime}\right) \\
&  =u\left(  x\right)  -b\cdot x+u^{\ast}\left(  T\left(  x^{\prime}\right)
\right)  -x\cdot\left[  T\left(  x\right)  -b\right]  .
\end{align*}
Hence%
\begin{equation}
g_{\bar{u}}\left(  x,\bar{T}\left(  x^{\prime}\right)  \right)  =g_{u}\left(
x,T\left(  x^{\prime}\right)  \right)  . \label{eq:same-gap}%
\end{equation}
Fix $R,$ $h>0$. Define
\[
G_{b,R}=\{\left(  x,x^{\prime}\right)  :\left\vert \bar{T}\left(  x\right)
\right\vert \leqslant R,\ \left\vert \bar{T}\left(  x^{\prime}\right)
\right\vert \leqslant R\}.
\]
Note that%
\[
G_{b,R}\subset\left(  \operatorname{dom}\partial\bar{u}\right)  ^{2}=\left(
\operatorname{dom}\partial u\right)  ^{2}\subset\left(  \operatorname{dom}%
u\right)  ^{2}.
\]

\textbf{2}. We claim that%
\begin{equation}
\int_{G_{b,R}}\left\vert \bar{T}\left(  x\right)  -\bar{T}\left(  x^{\prime
}\right)  \,\right\vert ^{2}dq\leqslant\frac{4R}{h}\mathcal{D}_{\nu}\left(
\sigma\right)  +18R^{2}\mathfrak{A}_{\rho}\left(  h\right)  .
\label{eq:good-bound}%
\end{equation}
Since the inequality already holds if $q\left(  G_{b,R}\right)  =0$, we assume
$q\left(  G_{b,R}\right)  >0$ in the following$.$ So $G_{b,R}$ is non-empty.
For $q$-a.e. $\left(  x,x^{\prime}\right)  \in G_{b,R}$, let%
\[
p=\bar{T}\left(  x\right)  ,\text{ }r=\bar{T}\left(  x^{\prime}\right)  ,
\]
and choose
\[
\lambda=\frac{h}{2R}.
\]
Applying Lemma~\ref{lem:prox} to $\bar{u}$ yields $z,$ $v$ such that
\[
z\in B\left(  x,h\right)  ,\text{ }v\in\partial\bar{u}\left(  z\right)
,\text{ }\left\vert v\right\vert \leqslant3R,\text{ }g_{\bar{u}}\left(
x,r\right)  \geqslant\lambda\left\vert v-r\right\vert ^{2}.
\]
Since $p\in B\left(  0,R\right)  $ provides one point where $\bar{u}^{\ast
}\left(  p\right)  =x\cdot p-u\left(  x\right)  $ is finite, the truncated
potential
\[
\bar{u}_{R}=\left[  \bar{u}^{\ast}+I_{B\left(  0,3R\right)  }\right]  ^{\ast}%
\]
is well-defined by Lemma~\ref{lem:slope-trunc} and
\[
p\in\partial\bar{u}_{R}\left(  x\right)  ,\text{ }v\in\partial\bar{u}%
_{R}\left(  z\right)  .
\]
Since $z\in B\left(  x,h\right)  $, the definition in $\left(  \ref{eq:omega}%
\right)  $ gives
\[
\left\vert v-p\right\vert \leqslant\omega_{\bar{u}_{R},h}\left(  x\right)  .
\]
Writing $r-p=\left(  r-v\right)  +\left(  v-p\right)  $, we obtain
\begin{equation}
\left\vert r-p\right\vert ^{2}\leqslant2\left\vert r-v\right\vert
^{2}+2\left\vert v-p\right\vert ^{2}\leqslant\frac{2}{\lambda}g_{\bar{u}%
}\left(  x,r\right)  +2\omega_{\bar{u}_{R},h}\left(  x\right)  ^{2}.
\label{eq:good-point}%
\end{equation}
The Fenchel gap is nonnegative, so $\left(  \ref{eq:D-gap}\right)  $ and
$\left(  \ref{eq:same-gap}\right)  $ imply
\[
\int_{G_{b,R}}g_{u}\left(  x,T\left(  x^{\prime}\right)  \right)
dq=\int_{G_{b,R}}g_{\bar{u}}\left(  x,\bar{T}\left(  x^{\prime}\right)
\right)  dq\leqslant\mathcal{D}_{\nu}\left(  \sigma\right)  .
\]
The first marginal of $q|_{G_{b,R}}$ is dominated by $\rho$. Since $\bar
{u}_{R}$ convex and $3R$-Lipschitz, we have by $\left(  \ref{eq:A-scale}%
\right)  $%
\[
\int_{G_{b,R}}\omega_{\bar{u}_{R},h}^{2}\left(  x\right)  dq\leqslant
9R^{2}\mathfrak{A}_{\rho}\left(  h\right)  .
\]
Thus $\left(  \ref{eq:good-bound}\right)  $ follows by integrating $\left(
\ref{eq:good-point}\right)  $ on $G_{b,R}$ and using $2/\lambda=4R/h$.

\textbf{3}. Now we return to the original transport map. Since translation
does not change differences of transports, we may replace $\bar{T}\left(
x\right)  -\bar{T}\left(  x^{\prime}\right)  $ with $T\left(  x\right)
-T\left(  x^{\prime}\right)  $ in $\left(  \ref{eq:good-bound}\right)  $ and
let $\pi=\left(  T,T\right)  _{\#}q\in\Pi\left(  \nu,\nu\right)  $, with
coordinates $y,$ $y^{\prime}$. Let $A=\left\{  \left\vert y-b\right\vert
>R\right\}  $. Then
\[
\left(  G_{b,R}\right)  ^{c}=\{y\in A\}\cup\{y^{\prime}\in A\}.
\]
Since for all $y,$ $y^{\prime},$%
\[
\left\vert y-y^{\prime}\right\vert ^{2}\leqslant2\left\vert y-b\right\vert
^{2}+2\left\vert y^{\prime}-b\right\vert ^{2},
\]
we have
\[
\int_{\left(  G_{b,R}\right)  ^{c}}\left\vert y-y^{\prime}\right\vert ^{2}%
d\pi\leqslant2\int\left\vert y-b\right\vert ^{2}\left(  1_{\left\{  y\in
A\right\}  }+1_{\left\{  y^{\prime}\in A\right\}  }\right)  d\pi
+2\int\left\vert y^{\prime}-b\right\vert ^{2}\left(  1_{\left\{  y\in
A\right\}  }+1_{\left\{  y^{\prime}\in A\right\}  }\right)  d\pi.
\]
Each diagonal term equals $\tau_{2,\nu,b}\left(  R\right)  $. For a cross
term, say the first, the $y$-marginal of the submeasure $1_{\left\{
y^{\prime}\in A\right\}  }\pi$ has the form $\vartheta\nu$, where
$0\leqslant\vartheta\leqslant1$ and $\int\vartheta d\nu=\nu\left(  A\right)
=\tau_{\nu,b}\left(  R\right)  $. Hence each cross term is at most $U_{\nu
,b}\left(  \tau_{\nu,b}\left(  R\right)  \right)  $. It follows that
\begin{equation}
\int_{\left(  G_{b,R}\right)  ^{c}}\left\vert T\left(  x\right)  -T\left(
x^{\prime}\right)  \,\right\vert ^{2}dq\leqslant\mathfrak{T}_{\nu,b}\left(
R\right)  . \label{eq:bad-bound}%
\end{equation}
Adding $\left(  \ref{eq:good-bound}\right)  $, $\left(  \ref{eq:bad-bound}%
\right)  $ and using $\left(  \ref{eq:E-lift}\right)  $, we obtain%
\[
\mathcal{E}_{\nu}\left(  \sigma\right)  \leqslant\frac{4R}{h}\mathcal{D}_{\nu
}\left(  \sigma\right)  +18R^{2}\mathfrak{A}_{\rho}\left(  h\right)
+\mathfrak{T}_{\nu,b}\left(  R\right)  .
\]
The proof is therefore completed by taking the infimum over $R,$ $h>0$ and
$b\in\mathbb{R}^{d}$.
\end{proof}

\begin{remark}
Note that the compactness of the source is not used in the proof. It enters
only in some estimates of $\mathfrak{A}_{\rho}$, thus allowing us to treat
source and target tails separately.
\end{remark}

\section{Estimates of the source profile}

\label{sec:source-profile}

\subsection{Geometric estimate and the tailored oscillation integral}

Let $v$ be $L$-Lipschitz convex, $a\in\mathbb{R}^{d}$, $h>0,$ and $\omega_{d}$
be the volume of the unit ball. \cite[Theorem 2.1]{carlier2025quantitative}
gives a covering of oscillation in $\left(  \ref{eq:omega}\right)  $ by balls
of radius $8h$. Hence using $\left(  S+4h\right)  ^{d-1}\leqslant
5^{d-1}S^{d-1}$ for $0<h\leqslant S$ we get
\begin{equation}
\left\vert \left\{  x\in B\left(  a,S\right)  :\omega_{v,h}\left(  x\right)
>\alpha\right\}  \right\vert \leqslant K_{d}^{\mathrm{vol}}\frac{LS^{d-1}%
h}{\alpha}\text{ for }0<h\leqslant S,\ \alpha>0, \label{eq:volume}%
\end{equation}
where the bound has been scaled by the volume $\omega_{d}\left(  8h\right)
^{d}$ and%
\begin{equation}
K_{d}^{\mathrm{vol}}=48d^{2}\omega_{d}8^{d}5^{d-1}. \label{eq:K-vol}%
\end{equation}

The following is a variant of \cite[Corollary 2.2]{carlier2025quantitative}
tailored to our required specifications.

\begin{lemma}
[Oscillation integral]For $r>1$, define
\begin{equation}
K_{d,r}^{\mathrm{osc}}=\max\left\{  \frac{r2^{r-1}}{r-1}K_{d}^{\mathrm{vol}%
},\,2^{r}\omega_{d}\right\}  . \label{eq:K-osc}%
\end{equation}
Then, for $h,$ $S>0$,
\begin{equation}
\int_{B\left(  a,S\right)  }\omega_{v,h}^{r}dx\leqslant K_{d,r}^{\mathrm{osc}%
}S^{d-1}L^{r}h. \label{eq:osc-int}%
\end{equation}
The constant in the bound is independent of $a.$
\end{lemma}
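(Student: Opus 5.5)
The plan is the layer-cake formula applied to the distribution estimate (\ref{eq:volume}), splitting the range of $\alpha$ at a threshold. By (\ref{eq:omega-2L}), $0\leqslant\omega_{v,h}\leqslant2L$ everywhere, so
\[
\int_{B(a,S)}\omega_{v,h}^{r}dx=\int_{0}^{2L}r\alpha^{r-1}\left\vert \left\{  x\in B(a,S):\omega_{v,h}(x)>\alpha\right\}  \right\vert d\alpha .
\]
Two regimes will be treated separately: $h\leqslant S$, where (\ref{eq:volume}) is available, and $h>S$, where only the trivial pointwise bound is used.

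\textbf{Case $0<h\leqslant S$.} Split the $\alpha$-integral at a threshold $\alpha_{0}\in(0,2L]$. On $(0,\alpha_{0})$, bound the level set by the full ball, of measure $\omega_{d}S^{d}$; this contributes at most $\omega_{d}S^{d}\alpha_{0}^{r}$. On $(\alpha_{0},2L)$, insert (\ref{eq:volume}); this contributes at most
\[
rK_{d}^{\mathrm{vol}}LS^{d-1}h\int_{\alpha_{0}}^{2L}\alpha^{r-2}d\alpha\leqslant\frac{r}{r-1}K_{d}^{\mathrm{vol}}LS^{d-1}h(2L)^{r-1},
\]
where finiteness uses $r>1$. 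Taking $\alpha_{0}\to0$, the first piece vanishes, so the total is at most $\frac{r2^{r-1}}{r-1}K_{d}^{\mathrm{vol}}S^{d-1}L^{r}h$. This is exactly the first entry of the max in (\ref{eq:K-osc}). Equivalently, one can simply apply (\ref{eq:volume}) for every $\alpha\in(0,2L)$, which is legitimate because the bound holds for all $\alpha>0$.

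\textbf{Case $h>S$.} Use $\omega_{v,h}\leqslant2L$ pointwise to get
\[
\int_{B(a,S)}\omega_{v,h}^{r}dx\leqslant\omega_{d}S^{d}2^{r}L^{r}\leqslant2^{r}\omega_{d}S^{d-1}L^{r}h,
\]
since $S<h$. This is the second entry of the max in (\ref{eq:K-osc}).

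Combining the two cases gives (\ref{eq:osc-int}). Independence from $a$ holds because (\ref{eq:volume}) and the ball volume do not depend on $a$. Measurability of $\omega_{v,h}$ was established earlier, so the layer-cake formula applies.

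The only delicate point is the first case. One must check that the level-set bound (\ref{eq:volume}) has an integrable singularity $\alpha^{-1}$ against the weight $\alpha^{r-1}$. This works precisely because $r>1$, and it explains the factor $r/(r-1)$ in the constant.
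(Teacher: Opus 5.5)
Your proposal is correct and is essentially the paper's proof. For $0<h\leqslant S$ it uses the layer-cake formula truncated at $2L$ together with the level-set volume estimate; for $h>S$ it uses the pointwise bound $\omega_{v,h}\leqslant 2L$ together with $S^{d}<S^{d-1}h$, and the two cases produce exactly the two entries of the maximum defining $K_{d,r}^{\mathrm{osc}}$. The threshold $\alpha_{0}$ is an unnecessary detour, as you note yourself, and the degenerate case $L=0$, which the paper treats separately, is covered automatically because the $\alpha$-range is then empty.
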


\begin{proof}
If $L=0$, the function is constant and its oscillation vanishes. Assume $L>0$.
If $0<h\leqslant S$, using $\left(  \ref{eq:volume}\right)  $ and that
$\omega_{v,h}\leqslant2L$ yield%
\begin{align*}
\int_{B\left(  a,S\right)  }\omega_{v,h}^{r}dx  &  =r\int_{0}^{2L}\alpha
^{r-1}\left\vert \left\{  x\in B\left(  a,S\right)  :\omega_{v,h}\left(
x\right)  >\alpha\right\}  \right\vert d\alpha\\
&  \leqslant rK_{d}^{\mathrm{vol}}LS^{d-1}h\int_{0}^{2L}\alpha^{r-2}%
\,d\alpha=\frac{r2^{r-1}}{r-1}K_{d}^{\mathrm{vol}}S^{d-1}L^{r}h.
\end{align*}
Since $r>1,$ the integral is finite. If $h>S$, we use $\left(
\ref{eq:A-scale}\right)  $ to obtain
\[
\int_{B\left(  a,S\right)  }\omega_{v,h}^{r}dx\leqslant\left(  2L\right)
^{r}\omega_{d}S^{d}\leqslant2^{r}\omega_{d}S^{d-1}L^{r}h.
\]
Combining the two cases we see that $\left(  \ref{eq:osc-int}\right)  $ holds
for all $h,$ $S>0$.
\end{proof}

\subsection{Source with compact support}

\begin{proposition}
\label{prop:compact-Lp}Let $\rho=fdx\in\mathcal{P}\left(  \mathbb{R}%
^{d}\right)  $ be supported in $B\left(  a,R_{X}\right)  $, $R_{X}>0$, $f\in
L^{p}\left(  \mathbb{R}^{d}\right)  $,$1<p\leqslant\infty$ and $s=1-1/p.$ For
every $h>0$,
\begin{equation}
\mathfrak{A}_{\rho}\left(  h\right)  \leqslant K_{d,\rho,p}R_{X}^{\left(
d-1\right)  s}h^{s}. \label{eq:A-compact-Lp}%
\end{equation}
where%
\begin{equation}
K_{d,p}^{L^{p}}=\left(  K_{d,2/s}^{\mathrm{osc}}\right)  ^{s}\text{ and
}K_{d,\rho,p}=K_{d,p}^{L^{p}}\left\Vert f\right\Vert _{L^{p}}.
\label{eq:K-d-rho-p}%
\end{equation}

\end{proposition}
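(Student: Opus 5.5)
Fix an arbitrary convex $1$-Lipschitz $v$ and bound $\int\omega_{v,h}^{2}f\,dx$ uniformly in $v$. Since $f$ vanishes outside $B(a,R_X)$, the integral lives on $B(a,R_X)$, and we apply H\"older there with exponents $p$ and $p'=p/(p-1)=1/s$:
\[
\int\omega_{v,h}^{2}f\,dx\leqslant\left\Vert f\right\Vert _{L^{p}}\left(\int_{B(a,R_X)}\omega_{v,h}^{2/s}\,dx\right)^{s}.
\]
When $p=\infty$ we have $s=1$, the exponent is $2$, and this is just the $L^\infty$--$L^1$ pairing.

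The next step is to apply the oscillation integral lemma $\left(\ref{eq:osc-int}\right)$ with $r=2/s$, $S=R_X$ and $L=1$. Because $0<s\leqslant1$, we have $r=2/s\geqslant2>1$, so the lemma applies and gives
\[
\int_{B(a,R_X)}\omega_{v,h}^{2/s}dx\leqslant K_{d,2/s}^{\mathrm{osc}}R_X^{d-1}h ,
\]
for every $h>0$. The lemma covers both $h\leqslant S$ and $h>S$, so no case split is needed here. Raising to the power $s$ yields $\left(K_{d,2/s}^{\mathrm{osc}}\right)^{s}R_X^{(d-1)s}h^{s}$. Multiplying by $\Vert f\Vert_{L^p}$ gives exactly $K_{d,\rho,p}R_X^{(d-1)s}h^s$ with the constant in $\left(\ref{eq:K-d-rho-p}\right)$. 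Taking the supremum over $v$ then gives $\left(\ref{eq:A-compact-Lp}\right)$.

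There is no real obstacle. The only points to check are measurability of $\omega_{v,h}$, which the paper has already established as upper semicontinuous, and the fact that the constant of the oscillation lemma does not depend on the center $a$. The key design choice is the H\"older exponent: one must use $2/s$ rather than interpolating with the trivial bound $\omega\leqslant2$, since that choice is what produces the clean exponent $h^s$.
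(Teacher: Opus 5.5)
Your proposal is correct and follows the paper's proof: H\"older with exponents $p$ and $1/s$ on $B(a,R_X)$, then the oscillation integral lemma with $L=1$, $r=2/s$, $S=R_X$, then the supremum over $v$. Your checks that $r=2/s>1$ and that the lemma covers all $h>0$ with a center-independent constant are exactly what is needed.
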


\begin{proof}
Let $v$ be $1$-Lipschitz convex. For $1<p\leqslant\infty$, using H\"{o}lder's
inequality with exponents $p,$ $1/s$ and the oscillation estimate $\left(
\ref{eq:osc-int}\right)  $ with $L=1$, $r=2/s$, $S=R_{X}$ we obtain
\[
\int\omega_{v,h}^{2}fdx\leqslant\left\Vert f\right\Vert _{L^{p}}\left(
\int_{B\left(  a,R_{X}\right)  }\omega_{v,h}^{2/s}dx\right)  ^{s}%
\leqslant\left(  K_{d,2/s}^{\mathrm{osc}}\right)  ^{s}\left\Vert f\right\Vert
_{L^{p}}R_{X}^{\left(  d-1\right)  s}h^{s}.
\]
The proof is completed by taking the supremum in $v$.
\end{proof}

The following small-set control weakens the assumption of Proposition
\ref{prop:compact-Lp}. It will compensate the need to deal with critical
$L^{p}$-norms without sending limits$.$

\begin{proposition}
[Small-set control]\label{prop:small-set}Let $\rho=fdx\in\mathcal{P}\left(
\mathbb{R}^{d}\right)  $ be supported in $B(a,R_{X})$, $R_{X}>0$. Suppose that
there is $C_{V}>0$ such that for each Borel $E\subset B(a,R_{X})$,
\begin{equation}
\rho\left(  E\right)  \leqslant C_{V}\left\vert E\right\vert ^{s},\text{
}0<s\leqslant1, \label{eq:small-set-density}%
\end{equation}
where $\left\vert E\right\vert $ is Lebesgue measure. Then for $h>0,$
\begin{equation}
\mathfrak{A}_{\rho}(h)\leqslant B_{s}R_{X}^{\left(  d-1\right)  s}h^{s},
\label{eq:small-set-A}%
\end{equation}
where%
\begin{equation}
B_{s}=\max\left\{  \frac{2^{3-s}}{2-s}C_{V}(K_{d}^{\mathrm{vol}})^{s}%
,\,4R_{X}^{-ds}\right\}  . \label{eq:small-set-Bs}%
\end{equation}

\end{proposition}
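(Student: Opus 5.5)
Fix a convex $1$-Lipschitz $v$ and bound $\int\omega_{v,h}^{2}\,d\rho$ by the layer-cake formula. We combine the small-set hypothesis (\ref{eq:small-set-density}) with the volume estimate (\ref{eq:volume}), and take the supremum over $v$ at the end. Two regimes of $h$ are treated, mirroring the proof of the oscillation integral lemma.

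\emph{Case $h>R_{X}$.} Here we only use $\mathfrak{A}_{\rho}(h)\leqslant4$ from (\ref{eq:A-univ}). Since $R_{X}^{ds}\leqslant R_{X}^{(d-1)s}h^{s}$ when $h>R_{X}$, we get
\[
4=4R_{X}^{-ds}R_{X}^{ds}\leqslant4R_{X}^{-ds}R_{X}^{(d-1)s}h^{s}.
\]
This is the second entry of the maximum in (\ref{eq:small-set-Bs}).

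\emph{Case $0<h\leqslant R_{X}$.} Since $\rho$ is supported in $B(a,R_{X})$ and $\omega_{v,h}\leqslant2$ by (\ref{eq:omega-2L}), we write
\[
\int\omega_{v,h}^{2}\,d\rho=2\int_{0}^{2}\alpha\,\rho\left(E_{\alpha}\right)d\alpha,\qquad E_{\alpha}=\left\{x\in B(a,R_{X}):\omega_{v,h}(x)>\alpha\right\}.
\]
The set $E_{\alpha}$ is Borel by the upper semicontinuity noted after (\ref{eq:omega}). Applying (\ref{eq:volume}) with $L=1$ and $S=R_{X}$ gives $|E_{\alpha}|\leqslant K_{d}^{\mathrm{vol}}R_{X}^{d-1}h/\alpha$. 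The hypothesis (\ref{eq:small-set-density}) then yields
\[
\rho(E_{\alpha})\leqslant C_{V}(K_{d}^{\mathrm{vol}})^{s}R_{X}^{(d-1)s}h^{s}\alpha^{-s}.
\]
Integrating,
\[
2\int_{0}^{2}\alpha^{1-s}\,d\alpha=\frac{2\cdot2^{2-s}}{2-s}=\frac{2^{3-s}}{2-s},
\]
which is finite because $s\leqslant1<2$. This produces exactly the first entry of the maximum in (\ref{eq:small-set-Bs}).

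Taking the maximum of the two cases and then the supremum over $v$ gives (\ref{eq:small-set-A}). There is no real obstacle here. The only points needing care are the measurability of $E_{\alpha}$, already settled in Section \ref{sec:profiles}, and the restriction $h\leqslant S$ in (\ref{eq:volume}), which is why the argument is split into two cases and the constant $4R_{X}^{-ds}$ appears.
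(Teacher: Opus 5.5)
Your proposal is correct and is essentially the paper's own proof. It uses the same split at $h=R_X$, the same layer-cake computation over $[0,2]$ combining the volume estimate (with $L=1$, $S=R_X$) and the small-set hypothesis to produce the constant $2^{3-s}C_V(K_d^{\mathrm{vol}})^s/(2-s)$, and the same universal bound $\mathfrak{A}_\rho(h)\leqslant 4\leqslant 4R_X^{-ds}R_X^{(d-1)s}h^s$ for $h>R_X$; your remark that the superlevel sets are Borel is a small detail the paper leaves implicit.
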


\begin{proof}
First assume $0<h\leqslant R_{X}$. Take a $1$-Lipschitz convex $v$ and $t>0$.
From the oscillation estimate $\left(  \ref{eq:volume}\right)  $ and~the
small-set condition $\left(  \ref{eq:small-set-density}\right)  ,$ we have
\[
\rho\left(  \omega_{v,h}>t\right)  \leqslant C_{V}(K_{d}^{\mathrm{vol}}%
R_{X}^{d-1})^{s}h^{s}t^{-s}.
\]
It follows that%
\[
\int\omega_{v,h}^{2}d\rho=2\int_{0}^{2}t\rho\left(  \omega_{v,h}>t\right)
dt\leqslant2C_{V}(K_{d}^{\mathrm{vol}}R_{X}^{d-1})^{s}h^{s}\int_{0}^{2}%
t^{1-s}dt=\frac{2^{3-s}}{2-s}C_{V}(K_{d}^{\mathrm{vol}}R_{X}^{d-1})^{s}h^{s}.
\]
The integral is finite for all $0<s\leqslant1$. Since this holds for all
$1$-Lipschitz convex $v,$ by taking the supremum in $v$ we get $\left(
\ref{eq:small-set-A}\right)  $ for the range $0<h\leqslant R_{X}$. For
$h>R_{X}$, we get by the universal bound $\left(  \ref{eq:A-univ}\right)  $
that $\mathfrak{A}_{\rho}\left(  h\right)  \leqslant4\leqslant4R_{X}^{-s}%
h^{s}.$ Combining the estimates yields $\left(  \ref{eq:small-set-A}\right)
.$
\end{proof}

\begin{remark}
If $f\in L^{p}\left(  \mathbb{R}^{d}\right)  ,$ then the small-set condition
$\left(  \ref{eq:small-set-density}\right)  $ holds with $s=1-1/p$. On the
other hand, if we write $s=1-1/p$ in $\left(  \ref{eq:small-set-density}%
\right)  ,$ then the small-set condition implies that $f\in L^{t}\left(
\mathbb{R}^{d}\right)  $ for $0<t<p,$ but it does not necessarily mean $f\in
L^{p}\left(  \mathbb{R}^{d}\right)  .$ As its name indicated, small-set
condition control behaviours on small scales.
\end{remark}

\subsection{Source with $n$th-moment and integrable density}

Define for $a\in\mathbb{R}^{d},$
\[
M_{\rho,n}\left(  a\right)  =\left(  \int_{\mathbb{R}^{d}}\left\vert
x-a\right\vert ^{n}f(x)dx\right)  ^{1/n}.
\]

\begin{proposition}
\label{prop:Lp-moment}Let $\rho=fdx\in\mathcal{P}\left(  \mathbb{R}%
^{d}\right)  ,$ $f\in L^{p}\left(  \mathbb{R}^{d}\right)  $, $1<p\leqslant
\infty$, $s=1-1/p,$ $a\in\mathbb{R}^{d}$ and suppose $M_{\rho,n}\left(
a\right)  <\infty$ for some $n\geqslant2$. Let%
\begin{equation}
\eta=\frac{sn}{n+\left(  d-1\right)  s}, \label{eq:A-eta}%
\end{equation}
Then for $S,$ $h>0,$%
\begin{equation}
\mathfrak{A}_{\rho}\left(  h\right)  \leqslant K_{d,\rho,p}S^{\left(
d-1\right)  s}h^{s}+4M_{\rho,n}\left(  a\right)  ^{n}S^{-n}.
\label{eq:A-Lp-moment}%
\end{equation}
where $K_{d,\rho,p}$ is in $\left(  \ref{eq:K-d-rho-p}\right)  .$
Consequently, for every $h>0$,
\begin{equation}
\mathfrak{A}_{\rho}\left(  h\right)  \leqslant B_{d,\rho,p,n}\left(  a\right)
h^{\eta}, \label{eq:A-Lp-moment-Opt}%
\end{equation}
where
\begin{equation}
B_{d,\rho,p,n}\left(  a\right)  =\left\{
\begin{array}
[c]{ll}%
c_{r,n}K_{d,\rho,p}^{n/\left(  n+r\right)  }\left[  4M_{\rho,n}\left(
a\right)  ^{n}\right]  ^{r/\left(  n+r\right)  }, & d>1,\\
K_{d,\rho,p}, & d=1,
\end{array}
\right.  \label{eq:B-d-rho-p-n}%
\end{equation}
the exponent $r=\left(  d-1\right)  s$ and $c_{r,n}$ is from $\left(
\ref{eq:two-power-constant}\right)  $.
\end{proposition}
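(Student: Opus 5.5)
The proof splits $\int\omega_{v,h}^{2}\,d\rho$ into the contribution of the ball $B(a,S)$ and that of its complement, then optimizes over $S$.

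\textbf{Splitting.} Fix a $1$-Lipschitz convex $v$ and $S,h>0$, and write
\[
\int\omega_{v,h}^{2}\,d\rho=\int_{B(a,S)}\omega_{v,h}^{2}f\,dx+\int_{B(a,S)^{c}}\omega_{v,h}^{2}f\,dx .
\]

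\textbf{Inside the ball.} Repeat the argument of Proposition~\ref{prop:compact-Lp} with $R_X$ replaced by $S$. H\"older's inequality with exponents $p$ and $1/s$, followed by the oscillation integral $\left(\ref{eq:osc-int}\right)$ with $L=1$, $r=2/s$ and radius $S$, gives at most $(K^{\mathrm{osc}}_{d,2/s})^{s}\Vert f\Vert_{L^p}S^{(d-1)s}h^{s}=K_{d,\rho,p}S^{(d-1)s}h^{s}$. The support assumption is not needed, because the integral is only over $B(a,S)$.

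\textbf{Outside the ball.} Use $\omega_{v,h}\le 2$ from $\left(\ref{eq:omega-2L}\right)$ and Markov's inequality $\rho(|x-a|>S)\le M_{\rho,n}(a)^nS^{-n}$. This gives at most $4M_{\rho,n}(a)^nS^{-n}$.

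Taking the supremum over $v$ yields $\left(\ref{eq:A-Lp-moment}\right)$.

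\textbf{Optimizing in $S$.} Write $r=(d-1)s$, $A=K_{d,\rho,p}h^{s}$ and $B=4M_{\rho,n}(a)^n$, so the bound reads $AS^{r}+BS^{-n}$. For $d>1$ we have $r>0$, and the infimum over $S>0$ is $c_{r,n}A^{n/(n+r)}B^{r/(n+r)}$. This is the elementary two-power inequality $\inf_{S>0}(AS^{r}+BS^{-n})=c_{r,n}A^{n/(n+r)}B^{r/(n+r)}$ with $c_{r,n}=\frac{n+r}{n}\left(\frac{n}{r}\right)^{r/(n+r)}$, which I take to be $\left(\ref{eq:two-power-constant}\right)$; it is attained at $S^{n+r}=nB/(rA)$. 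The power of $h$ is then
\[
\frac{sn}{n+r}=\frac{sn}{n+(d-1)s}=\eta,
\]
which gives $\left(\ref{eq:A-Lp-moment-Opt}\right)$ with the stated $B_{d,\rho,p,n}(a)$.

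For $d=1$ we have $r=0$ and $\eta=s$. Letting $S\to\infty$ kills the moment term, so $\mathfrak{A}_\rho(h)\le K_{d,\rho,p}h^{s}$.

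\textbf{Degenerate case and main step.} If $M_{\rho,n}(a)=0$, then $\rho$ is a point mass, which is impossible for a density. If one prefers to avoid that argument, letting $S\to 0$ gives $\mathfrak{A}_\rho(h)=0$ directly. The only nonroutine ingredient is checking that the $L^p$–H\"older step from Proposition~\ref{prop:compact-Lp} localizes to an arbitrary ball $B(a,S)$ without a support assumption. It does, because $\left(\ref{eq:osc-int}\right)$ holds for all $h,S>0$ and uniformly in $a$.
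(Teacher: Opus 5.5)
Your proof is correct and follows the paper's argument step for step: you split $\int\omega_{v,h}^{2}\,d\rho$ at $B(a,S)$, use H\"older with exponents $p,1/s$ plus the oscillation integral $\left(\ref{eq:osc-int}\right)$ inside, use $\omega_{v,h}\leqslant 2$ with Markov's inequality outside, and then optimize via Lemma~\ref{lem:two-power} for $d>1$ and let $S\to\infty$ for $d=1$. Your explicit handling of the degenerate case $M_{\rho,n}(a)=0$, which cannot occur because $\rho$ has a density, is a small point the paper leaves implicit.
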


\begin{proof}
Let $v$ be $1$-Lipschitz convex. On $B\left(  a,S\right)  $, the calculation
in the proof of Proposition~\ref{prop:compact-Lp} shows%
\[
\int_{B(a,S)}\omega_{v,h}^{2}f\,dx\leqslant K_{d,\rho,p}S^{\left(  d-1\right)
s}h^{s}.
\]
On $B\left(  a,S\right)  ^{c}$, we use the universal bound $\left(
\ref{eq:A-univ}\right)  $ and Markov's inequality,
\[
\int_{B(a,S)^{c}}\omega_{v,h}^{2}f\,dx\leqslant4\int_{B(a,S)^{c}}%
fdx\leqslant4M_{\rho,n}\left(  a\right)  ^{n}S^{-n}.
\]
Hence we get $\left(  \ref{eq:A-Lp-moment}\right)  $ by combining the two
estimates and taking supremum in $v$. Since$\ \left(  \ref{eq:A-Lp-moment}%
\right)  $ holds for every $S>0,$ we can choose $S$ that minimizes its RHS.
For $d>1,$ this is done by invoking Lemma \ref{lem:two-power}. For $d=1$, we
send $S\rightarrow\infty$ in $\left(  \ref{eq:A-Lp-moment}\right)  .$
Therefore the desired inequality $\left(  \ref{eq:A-Lp-moment-Opt}\right)  $
follows by substituting the chosen $S.$
\end{proof}

\subsection{Source on concentric annuli}

\begin{proposition}
\label{prop:annuli}Let $\rho=fdx\in\mathcal{P}\left(  \mathbb{R}^{d}\right)
,$ $a\in\mathbb{R}^{d},$ $1<p\leqslant\infty$ and $s=1-1/p$. Set
\[
A_{0}=B\left(  a,2\right)  ,\text{ }A_{k}=B\left(  a,2^{k+1}\right)  \setminus
B\left(  a,2^{k}\right)  ,\text{ }k\geqslant1,
\]
and define
\[
W_{p,a}\left(  f\right)  =\left\Vert f\right\Vert _{L^{p}\left(  A_{0}\right)
}+\sum_{k\geqslant1}2^{k\left(  d-1\right)  s}\left\Vert f\right\Vert
_{L^{p}\left(  A_{k}\right)  }.
\]
If $W_{p,a}\left(  f\right)  <\infty$, then for $h>0$,
\begin{equation}
\mathfrak{A}_{\rho}\left(  h\right)  \leqslant K_{d,p}^{\mathrm{ann}}%
W_{p,a}\left(  f\right)  h^{s}, \label{eq:A-annuli}%
\end{equation}
where $K_{d,p}^{\mathrm{ann}}=2^{\left(  d-1\right)  s}K_{d,p}^{L^{p}}.$
\end{proposition}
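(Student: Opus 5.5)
Fix a convex $1$-Lipschitz $v$ and decompose $\int\omega_{v,h}^{2}f\,dx=\sum_{k\geqslant0}\int_{A_{k}}\omega_{v,h}^{2}f\,dx$. The plan is to estimate each annulus exactly as in the proof of Proposition~\ref{prop:compact-Lp}, but with the local $L^{p}$ norm of $f$ on $A_{k}$ in place of the global one. Then we sum the resulting bounds with the weights $2^{k(d-1)s}$ and take the supremum over $v$.

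For each $k$, Hölder's inequality with exponents $p$ and $1/s$, with $f$ restricted to $A_{k}$, gives
\[
\int_{A_{k}}\omega_{v,h}^{2}f\,dx\leqslant\left\Vert f\right\Vert _{L^{p}(A_{k})}\left(\int_{A_{k}}\omega_{v,h}^{2/s}dx\right)^{s}.
\]
Since $A_{k}\subset B(a,2^{k+1})$, the oscillation integral $\left(\ref{eq:osc-int}\right)$ with $L=1$, $r=2/s>1$ and $S=2^{k+1}$ bounds the inner integral by $K_{d,2/s}^{\mathrm{osc}}2^{(k+1)(d-1)}h$. This estimate is valid for all $h,S>0$, so no case split in $h$ is needed. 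Raising it to the power $s$ gives
\[
\int_{A_{k}}\omega_{v,h}^{2}f\,dx\leqslant K_{d,p}^{L^{p}}2^{(d-1)s}\,2^{k(d-1)s}\left\Vert f\right\Vert _{L^{p}(A_{k})}h^{s},\qquad k\geqslant1.
\]
For $k=0$ we have $A_{0}=B(a,2)$, so $S=2=2^{0+1}$ and the same bound holds with factor $2^{(d-1)s}\cdot1$. This matches the unweighted first term of $W_{p,a}(f)$.

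Summing over $k$ by monotone convergence, since all terms are nonnegative, gives $\int\omega_{v,h}^{2}d\rho\leqslant2^{(d-1)s}K_{d,p}^{L^{p}}W_{p,a}(f)h^{s}$. Taking the supremum over $v$ then yields $\left(\ref{eq:A-annuli}\right)$.

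The only delicate point is the bookkeeping of the scale factor. Using the outer radius $2^{k+1}$ instead of $2^{k}$ produces exactly the extra factor $2^{(d-1)s}$ absorbed into $K_{d,p}^{\mathrm{ann}}$. The case $p=\infty$ ($s=1$) requires only the usual reading of Hölder's inequality with $\|f\|_{L^{\infty}(A_{k})}$. The annuli cover $\mathbb{R}^{d}$ up to the overlapping boundary spheres, which are Lebesgue-null, so the decomposition is exact.
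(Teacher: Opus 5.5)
Your proposal is correct and follows the paper's proof essentially verbatim. Like the paper, you apply H\"older with exponents $p,1/s$ on each shell $A_k$, use the oscillation integral at radius $S=2^{k+1}$ to get the factor $2^{(k+1)(d-1)s}$, and then sum over $k\geqslant0$ and take the supremum over $v$. One minor note: since the balls are closed, the shells are in fact pairwise disjoint and cover $\mathbb{R}^d$ exactly, so the null-set remark is unnecessary, though it is harmless.
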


\begin{proof}
On each shell $A_{k},$ as in the proof of Proposition \ref{prop:compact-Lp} we
can write, for $k\geqslant0,$
\[
\int_{A_{k}}\omega_{v,h}^{2}fdx\leqslant K_{d,p}^{L^{p}}2^{\left(  k+1\right)
\left(  d-1\right)  s}\left\Vert f\right\Vert _{L^{p}\left(  A_{k}\right)
}h^{s}.
\]
We now sum the integrals over $k\geqslant0$ and get%
\[
\int\omega_{v,h}^{2}fdx\leqslant K_{d,p}^{L^{p}}h^{s}\sum_{k\geqslant
0}2^{\left(  k+1\right)  \left(  d-1\right)  s}\left\Vert f\right\Vert
_{L^{p}\left(  A_{k}\right)  }=2^{\left(  d-1\right)  s}K_{d,p}^{L^{p}}%
h^{s}\sum_{k\geqslant0}2^{k\left(  d-1\right)  s}\left\Vert f\right\Vert
_{L^{p}\left(  A_{k}\right)  }.
\]
The RHS is independent of $v$. Therefore taking the supremum in $v$ we get
$\left(  \ref{eq:A-annuli}\right)  .$
\end{proof}

\begin{remark}
For fast decaying source measures, both Proposition \ref{prop:Lp-moment} and
Proposition \ref{prop:annuli} apply, but the latter is more dedicated and
gives stronger exponent: $\eta\leqslant s$ with strict inequality when $d>1.$
\end{remark}

\section{Estimates of the target profile}

\label{sec:target-profile}

Let $m>2$ and $b\in\mathbb{R}^{d}$, define
\[
M_{\nu,m}\left(  b\right)  =\left(  \int\left\vert y-b\right\vert ^{m}%
d\nu\left(  y\right)  \right)  ^{1/m},\text{ }M_{\nu,m}=\inf_{b\in
\mathbb{R}^{d}}M_{\nu,m}\left(  b\right)  .
\]
Using the triangle inequality in $L^{m}\left(  \nu\right)  $ we have%
\[
\left\vert M_{\nu,m}\left(  b_{1}\right)  -M_{\nu,m}\left(  b_{2}\right)
\right\vert \leqslant\left\vert b_{1}-b_{2}\right\vert \text{ for }b_{1}%
,b_{2}\in\mathbb{R}^{d}\text{.}%
\]
Thus, whenever $M_{\nu,m}\left(  b_{0}\right)  <\infty$ for some $b_{0}$, the
function $b\mapsto M_{\nu,m}\left(  b\right)  ^{m}$ is continuous and
coercive. Hence its infimum is attained. For $m>1$ it is strictly convex and
the minimizer is unique. We retain a specified $b$ when geometric information
gives a natural center and use $M_{\nu,m}$ when the best estimate is desired.

\begin{proposition}
\label{prop:target-moment}Let $m>2$ and $b\in\mathbb{R}^{d}$ satisfy
$M_{\nu,m}\left(  b\right)  <\infty$. Then for $R>0,$
\begin{equation}
\mathfrak{T}_{\nu,b}\left(  R\right)  \leqslant8M_{\nu,m}^{m}\left(  b\right)
R^{2-m}. \label{eq:T-moment}%
\end{equation}
Consequently,
\[
\inf_{b\in\mathbb{R}^{d}}\mathfrak{T}_{\nu,b}\left(  R\right)  \leqslant
8M_{\nu,m}^{m}R^{2-m}.
\]

\end{proposition}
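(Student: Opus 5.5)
The plan is to reduce $\mathfrak{T}_{\nu,b}$ to the plain second-order tail and then apply a Markov-type comparison of weights on the tail region. By the identity $\left(\ref{eq:tail-exact}\right)$, already established right after Definition~\ref{def:T}, we have $\mathfrak{T}_{\nu,b}\left(R\right)=8\tau_{2,\nu,b}\left(R\right)$ for every $R>0$ and every $b$. This identity rests on the selector argument showing that $U_{\nu,b}\left(\tau_{\nu,b}\left(R\right)\right)$ is attained by $\vartheta=1_{A}$. So it suffices to prove
\[
\tau_{2,\nu,b}\left(R\right)\leqslant M_{\nu,m}^{m}\left(b\right)R^{2-m}.
\]

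On the region $A=\left\{\left\vert y-b\right\vert>R\right\}$, since $m>2$ we have $\left(\left\vert y-b\right\vert/R\right)^{m-2}\geqslant1$, and therefore
\[
\left\vert y-b\right\vert^{2}\leqslant R^{2-m}\left\vert y-b\right\vert^{m}\quad\text{on }A.
\]
Integrating over $A$ against $\nu$ and enlarging the domain of integration to all of $\mathbb{R}^{d}$ gives
\[
\tau_{2,\nu,b}\left(R\right)\leqslant R^{2-m}\int_{A}\left\vert y-b\right\vert^{m}d\nu\leqslant R^{2-m}M_{\nu,m}\left(b\right)^{m}.
\]
Multiplying by $8$ yields $\left(\ref{eq:T-moment}\right)$. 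The hypothesis $M_{\nu,m}\left(b\right)<\infty$ only guarantees that the right-hand side is finite; the inequality is trivial otherwise.

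For the consequence, take the infimum over $b$ on both sides of $\left(\ref{eq:T-moment}\right)$. The right side becomes $8R^{2-m}\inf_{b}M_{\nu,m}\left(b\right)^{m}=8M_{\nu,m}^{m}R^{2-m}$. Since $t\mapsto t^{m}$ is increasing on $[0,\infty)$, the infimum of $M_{\nu,m}(b)^m$ equals $\left(\inf_b M_{\nu,m}(b)\right)^m$. Alternatively, we may simply evaluate at the minimizer $b$, whose existence was noted just before the proposition.

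There is no real obstacle. The only point needing care is to invoke $\left(\ref{eq:tail-exact}\right)$ rather than bounding $U_{\nu,b}$ separately. A separate bound would also work, because the supremum defining $U$ with mass allowance $\tau_{\nu,b}(R)$ is attained on the tail set $A$, but using the identity avoids redoing the selector argument.
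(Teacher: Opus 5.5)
Your proposal is correct and matches the paper's proof: you reduce $\mathfrak{T}_{\nu,b}(R)$ to $8\tau_{2,\nu,b}(R)$ via the exact tail identity, then bound the squared tail by $R^{2-m}M_{\nu,m}^{m}(b)$ using $|y-b|^{2}\leqslant R^{2-m}|y-b|^{m}$ on $\{|y-b|>R\}$. The infimum over $b$ then follows immediately, as you note.
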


\begin{proof}
Since $m>2,$ we have $\left\vert y-b\right\vert ^{2}\leqslant R^{2-m}%
\left\vert y-b\right\vert ^{m}$ whenever $\left\vert y-b\right\vert >R$. Thus
\[
\tau_{2,\nu,b}\left(  R\right)  \leqslant R^{2-m}\int\left\vert y-b\right\vert
^{m}d\nu=M_{\nu,m}^{m}\left(  b\right)  R^{2-m}.
\]
Hence $\left(  \ref{eq:T-moment}\right)  $ follows from the exact calculation
$\left(  \ref{eq:tail-exact}\right)  .$
\end{proof}

\section{Profile optimization and the error bound}

\label{sec:profile-optimization}

This section will see the conversion of profile estimates into explicit power
laws for the error bound.

\begin{theorem}
[Compact-moment]\label{thm:power}Assume Assumption~\ref{ass:brenier}. Let
$\rho=fdx\in\mathcal{P}\left(  \mathbb{R}^{d}\right)  $ be supported in
$B\left(  a,R_{X}\right)  $, $R_{X}>0,$ $1<p\leqslant\infty,$ $f\in
L^{p}\left(  \mathbb{R}^{d}\right)  $ and the target $v$ satisfy $M_{\nu
,m}<\infty$ for $m>2$. Let
\[
s=1-\frac{1}{p},\text{ }H_{s,m}=m\left(  1+s\right)  -s.
\]
Then for $\sigma\in\Pi(\rho,\nu)$,
\begin{equation}
\mathcal{E}_{\nu}(\sigma)\leqslant Q_{d,\rho,p,\nu,m}R_{X}^{\left(
d-1\right)  s\left(  m-2\right)  /H_{s,m}}\mathcal{D}_{\nu}(\sigma
)^{s(m-2)/H_{s,m}}, \label{eq:power-full}%
\end{equation}
where
\begin{equation}
a_{s}=\frac{2+s}{1+s},\text{ \ }B_{d,\rho,p}=c_{s,1}18^{1/\left(  1+s\right)
}4^{s/(1+s)}K_{d,\rho,p}^{1/\left(  1+s\right)  }, \label{eq:source-const}%
\end{equation}%
\[
Q_{d,\rho,p,\nu,m}=c_{a_{s},m-2}8^{a_{s}/\left(  a_{s}+m-2\right)  }%
B_{d,\rho,p}^{\left(  m-2\right)  \left(  1+s\right)  /H_{s,m}}M_{\nu
,m}^{m(2+s)/H_{s,m}},
\]
and $K_{d,\rho,p}$ is given in $\left(  \ref{eq:K-d-rho-p}\right)  $ and
$c_{a_{s},m-2}$, $c_{s,1}$ are from $\left(  \ref{eq:two-power-constant}%
\right)  .$ The objective gap exponent is
\begin{equation}
\theta_{s,m}=\frac{s\left(  m-2\right)  }{H_{s,m}}=\frac{\left(  p-1\right)
\left(  m-2\right)  }{2mp-m-p+1}. \label{eq:theta}%
\end{equation}
The constant in $\left(  \ref{eq:power-full}\right)  $ is independent of the
source center.
\end{theorem}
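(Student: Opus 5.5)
The plan is to specialize the structural bound of Theorem~\ref{thm:error-bound} with the concrete profile estimates and then optimize twice, first in $h$ and then in $R$, using the two-power optimization Lemma~\ref{lem:two-power} (with its constant $c_{\cdot,\cdot}$ from (\ref{eq:two-power-constant})). Concretely, I insert Proposition~\ref{prop:compact-Lp}, $\mathfrak{A}_\rho(h)\leqslant K_{d,\rho,p}R_X^{(d-1)s}h^s$, and Proposition~\ref{prop:target-moment}, $\mathfrak{T}_{\nu,b}(R)\leqslant 8M_{\nu,m}^m(b)R^{2-m}$. Writing $D=\mathcal{D}_\nu(\sigma)$ and taking $b$ to be the minimizer of $M_{\nu,m}(\cdot)$, which exists by the coercivity remark before Proposition~\ref{prop:target-moment}, this gives for all $R,h>0$
\[
\mathcal{E}_\nu(\sigma)\leqslant \frac{4RD}{h}+18R^2K_{d,\rho,p}R_X^{(d-1)s}h^s+8M_{\nu,m}^mR^{2-m}.
\]
The source center $a$ enters only through the support ball, and Proposition~\ref{prop:compact-Lp} has constants independent of $a$. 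This is why the final constant does not depend on the source center.

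\textbf{Step 1: optimize in $h$.} For fixed $R$, the first two terms have the form $A h^{-1}+Bh^{s}$ with $A=4RD$ and $B=18R^2K_{d,\rho,p}R_X^{(d-1)s}$. Lemma~\ref{lem:two-power} with exponents $(1,s)$ gives the infimum
\[
c_{s,1}A^{s/(1+s)}B^{1/(1+s)}=c_{s,1}\,4^{s/(1+s)}18^{1/(1+s)}K_{d,\rho,p}^{1/(1+s)}R_X^{(d-1)s/(1+s)}\,D^{s/(1+s)}R^{(s+2)/(1+s)}.
\]
This equals $B_{d,\rho,p}R_X^{(d-1)s/(1+s)}D^{s/(1+s)}R^{a_s}$ with $a_s=(2+s)/(1+s)$, which matches (\ref{eq:source-const}).

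\textbf{Step 2: optimize in $R$.} The remaining expression is $\Lambda R^{a_s}+8M^mR^{-(m-2)}$, where $\Lambda=B_{d,\rho,p}R_X^{(d-1)s/(1+s)}D^{s/(1+s)}$. Since $m>2$, Lemma~\ref{lem:two-power} with exponents $(a_s,m-2)$ gives
\[
c_{a_s,m-2}\,\Lambda^{(m-2)/(a_s+m-2)}\,(8M^m)^{a_s/(a_s+m-2)}.
\]
For the exponent bookkeeping, note that $a_s+m-2=\frac{2+s+(m-2)(1+s)}{1+s}=\frac{H_{s,m}}{1+s}$. Hence the power of $\Lambda$ is $(m-2)(1+s)/H_{s,m}$, so $D$ appears with exponent $s(m-2)/H_{s,m}=\theta_{s,m}$ and $R_X$ with exponent $(d-1)s(m-2)/H_{s,m}$. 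The power of $M_{\nu,m}$ is $m(2+s)/H_{s,m}$. All of these agree with (\ref{eq:power-full}). The closed form of $\theta_{s,m}$ in $p$ follows from substituting $s=(p-1)/p$.

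\textbf{Main obstacle.} The only delicate point is checking that Lemma~\ref{lem:two-power} applies as stated, in particular that it allows the infimum over all positive parameters with no range restriction. Proposition~\ref{prop:compact-Lp} holds for every $h>0$, so no constraint $h\leqslant R_X$ arises. I would also handle the degenerate case $D=0$ directly: then letting $h\to\infty$ and $R\to\infty$ in the bound gives $\mathcal{E}_\nu(\sigma)=0$, consistent with the claim.
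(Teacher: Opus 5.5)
Your proposal is correct and follows the paper's proof essentially step for step. It makes the same substitution of the compact $L^p$ source bound and the moment tail bound, centered at the minimizer of $M_{\nu,m}(\cdot)$, into the structural estimate. It then applies the two-power lemma twice in the same order, first in $h$ and then in $R$, with identical exponent bookkeeping.

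Two small repairs are needed in the degenerate cases.

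\textbf{The case $\mathcal{D}_\nu(\sigma)=0$.} You must send $h\to 0$ at fixed $R$, not $h\to\infty$; letting $h\to\infty$ makes the $h^{s}$ term blow up. Only after that do you send $R\to\infty$, which uses $m>2$.

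\textbf{The case $M_{\nu,m}=0$.} The two-power lemma requires strictly positive coefficients, so this case must be treated separately. It is immediate: $\nu$ is then a Dirac mass and every feasible plan has $\mathcal{E}_\nu(\sigma)=0$. Alternatively, let $R\to 0$ at fixed $h$ in the pre-optimized bound.
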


\begin{proof}
Let $b_{m}$ be the minimizer in the definition of $M_{\nu,m}$. Applying
Proposition~\ref{prop:compact-Lp} to the source profile,
Proposition~\ref{prop:target-moment} to the target profile, and then
Theorem~\ref{thm:error-bound} with center $b_{m}$, we obtain for $R,$ $h>0$,%
\begin{equation}
\mathcal{E}_{\nu}\left(  \sigma\right)  \leqslant4R\mathcal{D}_{\nu}\left(
\sigma\right)  h^{-1}+18K_{d,p}^{L^{p}}\left\Vert f\right\Vert _{L^{p}}%
R_{X}^{\left(  d-1\right)  s}R^{2}h^{s}+8M_{\nu,m}^{m}R^{2-m}.
\label{eq:pre-opt}%
\end{equation}
If $\mathcal{D}_{\nu}(\sigma)=0$, letting $h\rightarrow0$ then $R\rightarrow
\infty$ yields that $\mathcal{E}_{\nu}(\sigma)=0$. So $\left(
\ref{eq:power-full}\right)  $ holds trivially. If $M_{\nu,m}=0$, then
$\nu=\delta_{b_{m}}$, and every feasible plan is supported on $\mathbb{R}%
^{d}\times\{b_{m}\}$, hence again $\mathcal{E}_{\nu}(\sigma)$ is zero and
$\left(  \ref{eq:power-full}\right)  $ holds. Assume henceforth that
$\mathcal{D}_{\nu}(\sigma)>0$ and $M_{\nu,m}>0$. For fixed $R>0$. Applying
Lemma \ref{lem:two-power} we find the minimum of the sum of the first two
terms in $\left(  \ref{eq:pre-opt}\right)  ,$%
\begin{align*}
&  c_{s,1}\left[  18K_{d,\rho,p}R_{X}^{\left(  d-1\right)  s}R^{2}\right]
^{1/(1+s)}\left[  4R\mathcal{D}_{\nu}(\sigma)\right]  ^{s/(1+s)}\\
&  =c_{s,1}18^{1/\left(  1+s\right)  }4^{s/(1+s)}K_{d,\rho,p}^{1/\left(
1+s\right)  }R_{X}^{\left(  d-1\right)  s/\left(  1+s\right)  }R^{(2+s)/(1+s)}%
\mathcal{D}_{\nu}(\sigma)^{s/(1+s)}\\
&  =B_{d,\rho,p}R_{X}^{\left(  d-1\right)  s/\left(  1+s\right)
}R^{(2+s)/(1+s)}\mathcal{D}_{\nu}(\sigma)^{s/(1+s)}.
\end{align*}
Substituting it into $\left(  \ref{eq:pre-opt}\right)  $ we get
\begin{equation}
\mathcal{E}_{\nu}(\sigma)\leqslant B_{d,\rho,p}R_{X}^{\left(  d-1\right)
s/\left(  1+s\right)  }R^{(2+s)/(1+s)}\mathcal{D}_{\nu}(\sigma)^{s/(1+s)}%
+8M_{\nu,m}^{m}R^{2-m}. \label{eq:after-h}%
\end{equation}
We now unfreeze $R$ and apply Lemma \ref{lem:two-power} to the sum in $\left(
\ref{eq:after-h}\right)  $ to find the minimum%
\[
c_{a_{s},m-2}\left[  B_{d,\rho,p}R_{X}^{\left(  d-1\right)  s/\left(
1+s\right)  }\mathcal{D}_{\nu}(\sigma)^{s/(1+s)}\right]  ^{\left(  m-2\right)
/\left(  a_{s}+m-2\right)  }\left[  8M_{\nu,m}^{m}\right]  ^{a_{s}/\left(
a_{s}+m-2\right)  }.
\]
To work out the exponents we calculate%
\[
a_{s}+m-2=\frac{2+s+\left(  m-2\right)  \left(  1+s\right)  }{1+s}%
=\frac{m\left(  1+s\right)  -s}{1+s}=\frac{H_{s,m}}{1+s},
\]%
\[
m\frac{a_{s}}{a_{s}+m-2}=\frac{m(2+s)}{H_{s,m}},\text{ \ \ \ \ \ }\frac
{s}{1+s}\frac{m-2}{a_{s}+m-2}=\frac{s\left(  m-2\right)  }{H_{s,m}}.
\]
Substituting the calculations into $\left(  \ref{eq:after-h}\right)  $ we
obtain%
\[
\mathcal{E}_{\nu}(\sigma)\leqslant Q_{d,\rho,p,\nu,m}R_{X}^{\left(
d-1\right)  s\left(  m-2\right)  /H_{s,m}}\mathcal{D}_{\nu}(\sigma
)^{s(m-2)/H_{s,m}}.
\]
Therefore the desired bound $\left(  \ref{eq:power-full}\right)  $ is proved.
\end{proof}

\begin{corollary}
[Compact-compact]\label{cor:compact-target}Assume Assumption~\ref{ass:brenier}%
. Suppose $\rho=fdx\in\mathcal{P}\left(  \mathbb{R}^{d}\right)  $ is supported
in $B\left(  a,R_{X}\right)  $, $R_{X}>0$, $f\in L^{p}\left(  \mathbb{R}%
^{d}\right)  $, $1<p\leqslant\infty$, $s=1-1/p$ and the target $\nu$ is
supported in $B(b,R_{Y})$. Then for $\sigma\in\Pi(\rho,\nu)$,
\begin{equation}
\mathcal{E}_{\nu}(\sigma)\leqslant B_{d,\rho,p}R_{X}^{\left(  d-1\right)
s/\left(  1+s\right)  }R_{Y}^{(2+s)/(1+s)}\mathcal{D}_{\nu}(\sigma)^{s/(1+s)},
\label{eq:compact-target-E}%
\end{equation}
where $B_{d,\rho,p}$ is defined in $\left(  \ref{eq:source-const}\right)  .$
If instead $\rho$ satisfies the small-set condition $\left(
\ref{eq:small-set-density}\right)  $ with $0<s\leqslant1$, then $\left(
\ref{eq:compact-target-E}\right)  $ holds with $B_{s}$ in place of
$B_{d,\rho,p}$ in $\left(  \ref{eq:small-set-Bs}\right)  $. For $p=\infty$,
this is the squared map error estimate with exponent $1/2$ of \cite[Theorem
3.4 (3.15)]{merigot2026sharp}.
\end{corollary}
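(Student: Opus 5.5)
The plan is to apply Theorem~\ref{thm:error-bound} with center $b$ (the center of the ball containing the target) and with radius $R=R_Y$. Because $\nu$ is supported in $B(b,R_Y)$, which is a closed ball, the set $\{|y-b|>R_Y\}$ is $\nu$-null. Hence $\tau_{2,\nu,b}(R_Y)=0$, and by the exact identity \eqref{eq:tail-exact} we get $\mathfrak{T}_{\nu,b}(R_Y)=8\tau_{2,\nu,b}(R_Y)=0$. The target tail term therefore drops out completely, and we are left with
\[
\mathcal{E}_{\nu}(\sigma)\leqslant \inf_{h>0}\Big\{\frac{4R_Y}{h}\mathcal{D}_{\nu}(\sigma)+18R_Y^2\mathfrak{A}_{\rho}(h)\Big\}.
\]

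Next I would insert the source profile bound of Proposition~\ref{prop:compact-Lp}, namely $\mathfrak{A}_\rho(h)\leqslant K_{d,\rho,p}R_X^{(d-1)s}h^s$. This leaves a two-power minimization in $h$, with a term of order $h^{-1}$ and a term of order $h^{s}$. This is exactly the first optimization step in the proof of Theorem~\ref{thm:power}, with $R$ frozen at $R_Y$. Applying Lemma~\ref{lem:two-power} with exponents $s$ and $1$ gives the minimum
\[
c_{s,1}\bigl[18K_{d,\rho,p}R_X^{(d-1)s}R_Y^2\bigr]^{1/(1+s)}\bigl[4R_Y\mathcal{D}_\nu(\sigma)\bigr]^{s/(1+s)}.
\]
This simplifies to $B_{d,\rho,p}R_X^{(d-1)s/(1+s)}R_Y^{(2+s)/(1+s)}\mathcal{D}_\nu(\sigma)^{s/(1+s)}$, where $B_{d,\rho,p}$ is as defined in \eqref{eq:source-const}. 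Two degenerate cases are handled separately. If $\mathcal{D}_\nu(\sigma)=0$, letting $h\to0$ gives $\mathcal{E}_\nu(\sigma)=0$. If the source term vanishes, the bound is trivial.

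For the small-set variant, the argument is identical except that Proposition~\ref{prop:small-set} replaces Proposition~\ref{prop:compact-Lp}. This gives $\mathfrak{A}_\rho(h)\leqslant B_sR_X^{(d-1)s}h^s$, so the same optimization goes through with $B_s$ in the role of $K_{d,\rho,p}$. The resulting constant is then adjusted by the same factor $c_{s,1}18^{1/(1+s)}4^{s/(1+s)}$, raised to the appropriate power. I read the statement's ``$B_s$ in place of'' loosely, as meaning the analogous constant; if it is meant literally, I would check the constant bookkeeping carefully there.

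Finally, for $p=\infty$ we have $s=1$, so the exponent is $1/2$. This matches M\'erigot's bound, since the objective gap coincides with his correlation gap. I see no real obstacle in this argument. The only step that needs care is the observation that the tail vanishes exactly at $R=R_Y$. That relies on the balls being closed, so that $|y-b|\leqslant R_Y$ holds $\nu$-a.e.
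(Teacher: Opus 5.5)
Your argument is the paper's own: take the center $b$ and $R=R_Y$ in the structural error bound so that $\mathfrak{T}_{\nu,b}(R_Y)=0$, insert the compact-support $L^p$ profile bound, and minimize in $h$ with the two-power lemma (exponents $s$ and $1$). The only thing to tidy is the case $R_Y=0$, which should be dispatched directly ($\nu=\delta_b$ forces $\mathcal{E}_\nu(\sigma)=0$), since the error bound requires $R>0$.

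Your caution about the small-set clause is justified. What the argument delivers is the constant $c_{s,1}18^{1/(1+s)}4^{s/(1+s)}B_s^{1/(1+s)}$. A literal substitution of $B_s$ for $B_{d,\rho,p}$ cannot hold in general: under the dilation $x\mapsto\lambda x$ of the source, $\mathcal{E}_\nu(\sigma)$ is unchanged, while that right-hand side scales like $\lambda^{-ds^2/(1+s)}$.
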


\begin{proof}
If $R_{Y}=0$, then $\nu$ is a point mass and every feasible plan has zero map
error. If $\mathcal{D}_{\nu}(\sigma)=0$, the same conclusion follows from the
uniqueness in Assumption~\ref{ass:brenier}. Assume henceforth that $R_{Y}>0$
and $\mathcal{D}_{\nu}(\sigma)>0$. Substitute $R=R_{Y}$ and the specified
center $b$ of the supporting ball $B(b,R_{Y})$ into $\left(
\ref{eq:error-bound}\right)  $. The bound there only gets increased. Since
$\nu\left(  \left\vert y-b\right\vert >R_{Y}\right)  =0$, both terms in the
sum of $\left(  \ref{eq:Tdef}\right)  $ vanish and $\mathfrak{T}_{\nu,b}%
(R_{Y})=0$. So we get the same error bound $\left(  \ref{eq:pre-opt}\right)  $
as in the proof of Theorem \ref{thm:power}$\ $except that the third term
becomes zero in this compact target case,
\[
\mathcal{E}_{\nu}(\sigma)\leqslant4R_{Y}\mathcal{D}_{\nu}(\sigma
)h^{-1}+18K_{d,\rho,p}R_{Y}^{2}h^{s}.
\]
So the calculations in that proof are still valid. Replacing $R$ with $R_{Y}$
in $\left(  \ref{eq:after-h}\right)  $ (excluding the second term in the sum)
therefore gives%
\[
\mathcal{E}_{\nu}(\sigma)\leqslant B_{d,\rho,p}R_{X}^{\left(  d-1\right)
s/\left(  1+s\right)  }R_{Y}^{(2+s)/(1+s)}\mathcal{D}_{\nu}(\sigma
)^{s/(1+s)}.
\]
When $p=\infty$, $s=1$, and hence $s/(1+s)=1/2$. The proof for small-set
folllows similar lines, so we omit it.
\end{proof}

The source profile bound in Proposition~\ref{prop:Lp-moment} allows us to
obtain error bound for source and target that may possess unbounded density or supports.

\begin{theorem}
[moment-moment]\label{thm:unbounded-power}Assume Assumption~\ref{ass:brenier}
holds, let $\rho=fdx\in\mathcal{P}\left(  \mathbb{R}^{d}\right)  $, $f\in
L^{p}\left(  \mathbb{R}^{d}\right)  $, $1<p\leqslant\infty$, $s=1-1/p$ and
$M_{\rho,n}(a)<\infty$ for some $a\in$ $\mathbb{R}^{d}$, $n\geqslant2$.
Suppose the target $\nu$ has $M_{\nu,m}<\infty$ for some $m>2$. Let
\[
\eta=\frac{sn}{n+(d-1)s},\text{ \ }H_{\eta,m}=m\left(  1+\eta\right)
-\eta,\text{ }%
\]
Then for $\sigma\in\Pi(\rho,\nu)$,
\begin{equation}
\mathcal{E}_{\nu}(\sigma)\leqslant Q_{d,\rho,p,n,\nu,m}\left(  a\right)
\mathcal{D}_{\nu}(\sigma)^{\eta(m-2)/H_{\eta,m}}, \label{eq:unbounded-E}%
\end{equation}
where%
\begin{equation}
a_{\eta}=\frac{2+\eta}{1+\eta},\text{ \ }B_{d,\rho,p,n,c}\left(  a\right)
=c_{\eta,1}18^{1/\left(  1+\eta\right)  }4^{\eta/(1+\eta)}\left[
B_{d,\rho,p,n}\left(  a\right)  \right]  ^{1/\left(  1+\eta\right)  },
\label{eq:B-d-rho-p-n-c}%
\end{equation}%
\[
Q_{d,\rho,p,n,\nu,m}\left(  a\right)  =c_{a_{\eta},m-2}8^{a_{\eta}/\left(
a_{\eta}+m-2\right)  }\left[  B_{d,\rho,p,n,c}\left(  a\right)  \right]
^{\left(  m-2\right)  \left(  1+\eta\right)  /H_{\eta,m}}M_{\nu,m}%
^{m(2+\eta)/H_{\eta,m}},
\]
$B_{d,\rho,p,n}\left(  a\right)  $ is defined in $\left(  \ref{eq:B-d-rho-p-n}%
\right)  $. The objective gap exponent is%
\begin{equation}
\Theta_{\eta,m}=\frac{\eta(m-2)}{H_{\eta,m}}. \label{eq:Theta}%
\end{equation}
If $d=1,$ then $\eta=s$ and $B_{d,\rho,p,n}\left(  a\right)  $ is independent
of the source moment. The exponent is sharp for $d>1$ by
Theorem~\ref{thm:escaping-slabs}.
\end{theorem}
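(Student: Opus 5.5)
The plan is to repeat the proof of Theorem~\ref{thm:power} almost verbatim. The only change is that the compact-source profile of Proposition~\ref{prop:compact-Lp} is replaced by the moment profile (\ref{eq:A-Lp-moment-Opt}) of Proposition~\ref{prop:Lp-moment}, namely
\[
\mathfrak{A}_{\rho}(h)\leqslant B_{d,\rho,p,n}(a)h^{\eta}.
\]
For the target I take the center $b_{m}$ minimizing $M_{\nu,m}(b)$, which exists by the coercivity remark in Section~\ref{sec:target-profile}. Proposition~\ref{prop:target-moment} then gives $\mathfrak{T}_{\nu,b_{m}}(R)\leqslant 8M_{\nu,m}^{m}R^{2-m}$. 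Inserting both into Theorem~\ref{thm:error-bound} yields, for all $R,h>0$,
\[
\mathcal{E}_{\nu}(\sigma)\leqslant 4R\mathcal{D}_{\nu}(\sigma)h^{-1}+18B_{d,\rho,p,n}(a)R^{2}h^{\eta}+8M_{\nu,m}^{m}R^{2-m}.
\]

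Before optimizing I dispose of the degenerate cases, exactly as before. If $\mathcal{D}_{\nu}(\sigma)=0$, send $h\to0$ and then $R\to\infty$; this gives $\mathcal{E}_{\nu}(\sigma)=0$. If $M_{\nu,m}=0$, then $\nu$ is a Dirac mass and the error vanishes. The case $B_{d,\rho,p,n}(a)=0$ cannot occur, or is handled the same way by letting $h\to\infty$ and then $R\to\infty$.

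Otherwise I freeze $R$ and minimize over $h$ with Lemma~\ref{lem:two-power}, using exponents $\eta$ and $1$. This produces
\[
c_{\eta,1}\,[18B_{d,\rho,p,n}(a)R^{2}]^{1/(1+\eta)}\,[4R\mathcal{D}_{\nu}(\sigma)]^{\eta/(1+\eta)}=B_{d,\rho,p,n,c}(a)\,R^{a_{\eta}}\,\mathcal{D}_{\nu}(\sigma)^{\eta/(1+\eta)}.
\]
Then I minimize $B_{d,\rho,p,n,c}(a)R^{a_{\eta}}\mathcal{D}_{\nu}(\sigma)^{\eta/(1+\eta)}+8M_{\nu,m}^{m}R^{2-m}$ over $R$, again by Lemma~\ref{lem:two-power}, now with exponents $a_{\eta}$ and $m-2$. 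The exponent bookkeeping is the same as in Theorem~\ref{thm:power} with $s$ replaced by $\eta$:
\[
a_{\eta}+m-2=\frac{H_{\eta,m}}{1+\eta},\qquad \frac{\eta}{1+\eta}\cdot\frac{m-2}{a_{\eta}+m-2}=\Theta_{\eta,m},\qquad \frac{m\,a_{\eta}}{a_{\eta}+m-2}=\frac{m(2+\eta)}{H_{\eta,m}}.
\]
These give precisely the constant $Q_{d,\rho,p,n,\nu,m}(a)$ and the exponent in (\ref{eq:unbounded-E}).

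For $d=1$ we have $r=(d-1)s=0$, so $\eta=s$. Proposition~\ref{prop:Lp-moment} then gives $B=K_{d,\rho,p}$ directly, which is independent of $M_{\rho,n}(a)$. The sharpness claim for $d>1$ is deferred to Theorem~\ref{thm:escaping-slabs} and is not part of this proof.

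The only point needing care is that Lemma~\ref{lem:two-power} applies with strictly positive exponents and coefficients. Here $\eta>0$ because $s>0$, and $m-2>0$, so the minimization is legitimate once the degenerate cases above are set aside. The rest is routine algebra identical to Theorem~\ref{thm:power}.
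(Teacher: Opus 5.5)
Your proposal is correct and follows the paper's proof essentially line by line. Like the paper, you insert the moment source profile $\mathfrak{A}_\rho(h)\leqslant B_{d,\rho,p,n}(a)h^{\eta}$ and the target moment tail bound into Theorem~\ref{thm:error-bound}, dispose of the cases $\mathcal{D}_{\nu}(\sigma)=0$ and $M_{\nu,m}=0$, and then optimize over $h$ and $R$ with Lemma~\ref{lem:two-power} exactly as in Theorem~\ref{thm:power}, with $s$ replaced by $\eta$ and $R_X$ by $1$.
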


\begin{proof}
Assume first that $\mathcal{D}_{\nu}(\sigma)>0$ and $M_{\nu,m}>0$. The proof
follows the lines of Theorem \ref{thm:power}, but for the source profile we
invoke Proposition~\ref{prop:Lp-moment}, hence for all $h,$ $R>0$,
\begin{equation}
\mathcal{E}_{\nu}(\sigma)\leqslant4R\mathcal{D}_{\nu}(\sigma)h^{-1}%
+18B_{d,\rho,p,n}\left(  a\right)  R^{2}h^{\eta}+8M_{\nu,m}^{m}R^{2-m}.
\label{eq:pre-opt-nm}%
\end{equation}
This is $\left(  \ref{eq:pre-opt}\right)  $ with $K_{d,\rho,p}$ replaced with
$B_{d,\rho,p,n}\left(  a\right)  $ and $R_{X}$ there is \textquotedblleft
absorbed" into the exponent $\eta.$ So the resulting bound has the same form
as $\left(  \ref{eq:power-full}\right)  $ except that we need to replace $s$
with $\eta,$ $\theta_{s,m}$ with $\Theta_{\eta,m},$ and $R_{X}$ with $1.$ If
$\mathcal{D}_{\nu}(\sigma)=0$, let $h\rightarrow0$ at fixed $R$ in the initial
bound, and then let $R\rightarrow\infty$, using $m>2$. If $M_{\nu,m}=0$, the
target is a Dirac mass and the map error is zero. The proof is complete.
\end{proof}

The specialization to source with $n$th-moment and compact target is
analogously proved by looking at the first two terms in the sum $\left(
\ref{eq:pre-opt-nm}\right)  .$ We state the following result without proof.

\begin{corollary}
[moment-compact]\label{cor:moment-compactE}Assume that
Assumption~\ref{ass:brenier} holds, $\rho=fdx\in\mathcal{P}\left(
\mathbb{R}^{d}\right)  $, $f\in L^{p}\left(  \mathbb{R}^{d}\right)  $,
$1<p\leqslant\infty$, $s=1-1/p$, $M_{\rho,n}(a)<\infty$ for some $a\in$
$\mathbb{R}^{d}$, $n\geqslant2$ and the target $\nu$ is supported in
$B(b,R_{Y})$. Let $\eta$ be defined as in Theorem \ref{thm:unbounded-power}.
Then for $\sigma\in\Pi(\rho,\nu)$,%
\[
\mathcal{E}_{\nu}(\sigma)\leqslant B_{d,\rho,p,n,c}\left(  a\right)
R_{Y}^{(2+\eta)/(1+\eta)}\mathcal{D}_{\nu}(\sigma)^{\eta/(1+\eta)},
\]
where $B_{d,\rho,p,n,c}\left(  a\right)  $ is defined in $\left(
\ref{eq:B-d-rho-p-n-c}\right)  .$
\end{corollary}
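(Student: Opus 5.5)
The plan is to copy the proof of Corollary~\ref{cor:compact-target}, using the moment-based source profile of Proposition~\ref{prop:Lp-moment} instead of the compact-source profile. The degenerate cases go first. If $R_Y=0$, then $\nu$ is a Dirac mass and every feasible plan has zero map error. If $\mathcal{D}_{\nu}(\sigma)=0$, let $h\to0$ at fixed $R=R_Y$ in the bound below; this gives $\mathcal{E}_{\nu}(\sigma)=0$. Otherwise assume $R_Y>0$ and $\mathcal{D}_{\nu}(\sigma)>0$.

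For the main case, apply Theorem~\ref{thm:error-bound} with the center $b$ of the supporting ball and $R=R_Y$. Since $\nu(|y-b|>R_Y)=0$, both $\tau_{2,\nu,b}(R_Y)$ and $U_{\nu,b}(\tau_{\nu,b}(R_Y))=U_{\nu,b}(0)$ vanish, so $\mathfrak{T}_{\nu,b}(R_Y)=0$. Now insert the source estimate $\mathfrak{A}_{\rho}(h)\leqslant B_{d,\rho,p,n}(a)h^{\eta}$ from (\ref{eq:A-Lp-moment-Opt}). This gives, for every $h>0$,
\[
\mathcal{E}_{\nu}(\sigma)\leqslant 4R_Y\mathcal{D}_{\nu}(\sigma)h^{-1}+18B_{d,\rho,p,n}(a)R_Y^{2}h^{\eta},
\]
which is (\ref{eq:pre-opt-nm}) at $R=R_Y$ without its third term.

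The last step minimizes in $h$ with Lemma~\ref{lem:two-power}, using exponents $1$ and $\eta$, exactly as in the first optimization in the proof of Theorem~\ref{thm:power} with $s$ replaced by $\eta$. The minimum is
\[
c_{\eta,1}\bigl[18B_{d,\rho,p,n}(a)R_Y^2\bigr]^{1/(1+\eta)}\bigl[4R_Y\mathcal{D}_{\nu}(\sigma)\bigr]^{\eta/(1+\eta)}.
\]
Collecting powers, the power of $R_Y$ is $(2+\eta)/(1+\eta)$ and the power of $\mathcal{D}_{\nu}(\sigma)$ is $\eta/(1+\eta)$. The constant is precisely $B_{d,\rho,p,n,c}(a)$ from (\ref{eq:B-d-rho-p-n-c}).

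No step is a real obstacle. The only care needed is to check that Lemma~\ref{lem:two-power} applies: this needs $\eta>0$, which holds because $s>0$, and positive coefficients. One should also confirm that the constant conventions $c_{\eta,1}$ in (\ref{eq:two-power-constant}) match the form used in Theorem~\ref{thm:power}. In the case $d=1$, $B_{d,\rho,p,n}(a)=K_{d,\rho,p}$ and $\eta=s$, and the same computation goes through unchanged.
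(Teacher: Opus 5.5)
Your proposal is correct and follows the paper's route exactly. The paper states this corollary without proof, saying only that it is obtained from the first two terms of the moment-moment bound. Your argument makes that explicit:
\begin{itemize}
\item take the structural error bound at $R=R_Y$ with the ball's center $b$, so that $\mathfrak{T}_{\nu,b}(R_Y)=0$;
\item insert the optimized source profile $\mathfrak{A}_{\rho}(h)\leqslant B_{d,\rho,p,n}(a)h^{\eta}$;
\item apply the two-power lemma once in $h$ with exponents $\eta$ and $1$.
\end{itemize}
This reproduces the constant $B_{d,\rho,p,n,c}(a)$ and the powers $(2+\eta)/(1+\eta)$ and $\eta/(1+\eta)$, and your handling of the degenerate cases $R_Y=0$ and $\mathcal{D}_{\nu}(\sigma)=0$ is also correct.
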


\section{Sharpness and extreme cases}

\label{sec:sharpness}

The section tries to determine the sharpness of the exponents of the error
bound obtained in the preceding section. Here the sharpness means with a given
class of probability marginals, the exponent cannot be increased uniformly for
all pairs of marginals in that class.

\subsection{Sharpness of Theorem~\ref{thm:power}}

The next theorem establishes sharpness of the map error exponent in
Theorem~\ref{thm:power}. The example below does not uniformly satisfy the
strong assumptions of Corollary~\ref{cor:compact-target}, there is no common
bounded target support as $h\rightarrow0$.

\begin{theorem}
\label{thm:sharp}Fix $d\geqslant1$, $1<p\leqslant\infty$, $m>2$ and $s=1-1/p$.
There are $\rho_{h}=f_{h}dx$ supported in $\left[  0,1\right]  ^{d}$ for all
small $h>0$, target $\nu_{h}$ and plans $\sigma_{h}\in\Pi(\rho_{h},\nu_{h})$
such that
\[
\sup_{h}\left\Vert f_{h}\right\Vert _{L^{p}}<\infty,\text{ }\sup_{h}%
\int\left\vert y\right\vert ^{m}d\nu_{h}\left(  y\right)  <\infty,
\]
and
\begin{equation}
\mathcal{E}_{\nu_{h}}\left(  \sigma_{h}\right)  =2\mathcal{D}_{\nu_{h}}\left(
\sigma_{h}\right)  ^{\theta_{s,m}}. \label{eq:sharp-relation}%
\end{equation}
Here $\theta_{s,m}$ is exactly the exponent in Theorem~\ref{thm:power},
defined by $\left(  \ref{eq:theta}\right)  $. Thus the exponent there is sharp
and uniform in the given class of marginals.
\end{theorem}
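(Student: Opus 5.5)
The plan is to build an explicit one-parameter family in which a thin, heavy slab of the source sits right at a kink of a piecewise-linear Brenier potential, and the plan $\sigma_h$ swaps mass across the kink. The parameters are the slab width $h$, the mass $w$ carried by the slab, and the jump $R$ of the gradient. They will be tied together by the two constraints of the class: a uniform $L^p$ bound on $f_h$ and a uniform $m$-th moment bound on $\nu_h$. Matching the two sides then forces exactly the exponent $\theta_{s,m}$.

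\emph{Construction.} I take the potential $u(x)=R\max\{0,x_1-\tfrac12\}$, with $e_1$ the first coordinate vector. The source $\rho_h=f_hdx$ on $[0,1]^d$ has density $\lambda$ on each of the strips $S_-=\{\tfrac12-h<x_1<\tfrac12\}$ and $S_+=\{\tfrac12<x_1<\tfrac12+h\}$, where $\lambda=c\,h^{-1/p}$ for a fixed $c$. On $\{x_1<\tfrac12-h\}$ it has a bounded density carrying the remaining mass, and it has no mass on $\{x_1>\tfrac12+h\}$. Then $\|f_h\|_{L^p}$ stays bounded, and $w:=\rho_h(S_+)=\rho_h(S_-)=\lambda h=c\,h^{s}$. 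Since $\nabla u=0$ on $\{x_1<\tfrac12\}$ and $\nabla u=Re_1$ on $S_+$, Brenier's theorem (Assumption~\ref{ass:brenier}) shows that $T=\nabla u$ is the optimal map from $\rho_h$ to
\[
\nu_h=(1-w)\delta_0+w\,\delta_{Re_1}.
\]
I choose $R=w^{-1/m}$, so that $\int|y|^m d\nu_h=wR^m=1$ is uniformly bounded.

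\emph{The competitor.} Let $\tau(x)=x+he_1$, which maps $S_-$ onto $S_+$ and preserves $\lambda\,dx$. The plan $\sigma_h$ keeps the optimal coupling off $S_-\cup S_+$. On $S_-$ it sends $x$ to $Re_1$, and on $S_+$ it sends $x$ to $0$. Its marginals are unchanged, and the lifted self-coupling of Lemma~\ref{lem:lift} is the identity off the strips and exchanges $x\leftrightarrow\tau^{\pm1}(x)$ on them. Formula \eqref{eq:E-lift} then gives $\mathcal{E}_{\nu_h}(\sigma_h)=2wR^2$. Formula \eqref{eq:D-corr} gives
\[
\mathcal{D}_{\nu_h}(\sigma_h)=\int_{S_+}\bigl(x-\tau^{-1}(x)\bigr)\cdot Re_1\,\lambda\,dx=wRh.
\]
Substituting $w=ch^s$ and $R=w^{-1/m}$ yields $\mathcal{E}\asymp h^{s(m-2)/m}$ and $\mathcal{D}\asymp h^{(m(1+s)-s)/m}=h^{H_{s,m}/m}$. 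Hence $\mathcal{E}\asymp\mathcal{D}^{s(m-2)/H_{s,m}}=\mathcal{D}^{\theta_{s,m}}$, as required.

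\emph{Getting the constant exactly $2$.} The main bookkeeping obstacle is producing the equality \eqref{eq:sharp-relation} with constant exactly $2$, not just comparability. Since $\mathcal{E}=2wR^2$, the requirement is $wR^2=(wRh)^{\theta_{s,m}}$. The $h$-powers already agree, so this reduces to a single scalar equation in the free constant $c$ (equivalently, one can rescale $R$ by a constant factor, which keeps the moment bounded). I would solve for $c$ explicitly. If the resulting $c$ makes $\lambda h\leqslant1$ fail, or forces the bulk density to be negative for moderate $h$, I would restrict to small $h$; this is all the statement asks for.

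Finally, I would check that the swap is genuinely suboptimal, i.e. $\mathcal{D}>0$, and that $\mathcal{E}>0$, so that neither side is degenerate. I would also note that the family has no common bounded target support, since $R\to\infty$. This is consistent with the remark preceding the theorem and is why Corollary~\ref{cor:compact-target} is not contradicted.
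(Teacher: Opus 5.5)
Your proposal is correct and is essentially the paper's proof: two strips of width $h$ and density $\asymp h^{-1/p}$ straddling the kink of $R\max\{0,\cdot\}$, the target $(1-w)\delta_0+w\delta_{Re_1}$ with $R=w^{-1/m}$, and the translation swap across the kink, giving $\mathcal{E}=2wR^2$ and $\mathcal{D}=wRh$. The paper places the kink at $1-h$ in $d=1$ and then tensorizes, rather than working at $x_1=\tfrac12$ directly. The scalar equation you leave unsolved reads $c^{(m-2)/H_{s,m}}=1$, so $c=1$, i.e.\ $w=h^{s}$, which is exactly the paper's choice $q_h=h^s$ and makes all the small-$h$ checks immediate.
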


\begin{proof}
First consider $d=1$. Let $h>0$ small enough so that $h<1/4$ and $q_{h}%
=h^{s}<1/2.$ Let
\[
I_{-}=\left[  1-2h,1-h\right]  ,\text{ }I_{+}=\left[  1-h,1\right]  ,
\]
and define on $[0,1]$
\[
f_{h}\left(  x\right)  =\left\{
\begin{array}
[c]{ll}%
\left(  1-2q_{h}\right)  /\left(  1-2h\right)  , & x\in\left[  0,1-2h\right)
.\\
q_{h}/h, & x\in I_{-}\cup I_{+},
\end{array}
\right.
\]
Each of $I_{-},$ $I_{+}$ has mass $q_{h}$, while the remaining interval has
mass $1-2q_{h}$. Thus $f_{h}\geqslant0$ and $\int_{0}^{1}f_{h}\,dx=1$. We now
verify that $\sup_{h}\left\Vert f_{h}\right\Vert _{L^{p}}<\infty.$ For
$p=\infty$, $s=1$, $q_{h}=h$, so $f_{h}\equiv1$ and $\left\Vert f_{h}%
\right\Vert _{L^{p}}=1.$ For $p<\infty$, we have on $I_{-}\cup I_{+},$
\[
2h\left(  q_{h}/h\right)  ^{p}=2h\left(  h^{s-1}\right)  ^{p}=2hh^{-1}=2,
\]
On the remaining interval,
\[
0\leqslant\frac{1-2q_{h}}{1-2h}\leqslant\frac{1}{1-2h}\leqslant2.
\]
Hence $\sup_{h}\left\Vert f_{h}\right\Vert _{L^{p}}<\infty$.

Set $R_{h}=q_{h}^{-1/m}$ and define
\[
T_{h}(x)=%
\begin{cases}
0, & x\in\left[  0,1-h\right)  ,\\
R_{h}, & x\in\left[  1-h,1\right]  .
\end{cases}
\]
Up to a null set, $T_{h}$ equals the derivative the convex function%
\[
u_{h}(x)=R_{h}\max\left\{  0,x-\left(  1-h\right)  \right\}  .
\]
The pushforward of $\rho_{h}$ under $T_{h}$ is
\[
\nu_{h}=\left(  1-q_{h}\right)  \delta_{0}+q_{h}\delta_{R_{h}}.
\]
By uniqueness of Brenier map, $T_{h}$ is the optimal map between $\rho_{h}$
and $\nu_{h}$. The moments of $\nu_{h}$ are uniformly bounded,%
\[
\int\left\vert y\right\vert ^{m}d\nu_{h}=q_{h}R_{h}^{m}=1.
\]
So the construction satisfies all requirements.

Let $S_{h}$ be the map that swaps $I_{-}$ and $I_{+}$ by translations of
length $h$ and leaves the remaining unchanged. Since the densities on $I_{-}$
and $I_{+}$ are equal, we have $\left(  S_{h}\right)  _{\#}\rho_{h}=\rho_{h}$.
Thus $\left(  \operatorname{Id},S_{h}\right)  _{\#}\rho_{h}\in\Pi\left(
\rho_{h},\rho_{h}\right)  $. Define $\sigma_{h}=\left(  \operatorname{Id}%
,T_{h}\circ S_{h}\right)  _{\#}\rho_{h}$, where $T_{h}\circ S_{h}$ is the
composed transport map%
\[
T_{h}\circ S_{h}(x)=%
\begin{cases}
0, & x\in\left[  0,1-2h\right)  ,\\
R_{h} & x\in\left[  1-2h,1-h\right)  ,\\
0, & x\in\left[  1-h,1\right]  ,
\end{cases}
\]
Substituting $T_{h}$ and $\sigma_{h}$ into $\left(  \ref{eq:Eintro-map}%
\right)  $ we get
\begin{equation}
\mathcal{E}_{\nu_{h}}\left(  \sigma_{h}\right)  =2q_{h}R_{h}^{2}=2h^{s\left(
1-2/m\right)  }. \label{eq:E-sharp}%
\end{equation}
By $\left(  \ref{eq:D-corr}\right)  $ in Lemma~\ref{lem:lift} with
$q=(\operatorname{Id},S_{h})_{\#}\rho_{h}$,%
\begin{equation}
\mathcal{D}_{\nu_{h}}\left(  \sigma_{h}\right)  =R_{h}\frac{q_{h}}{h}\left(
\int_{I_{+}}xdx-\int_{I_{-}}xdx\right)  =R_{h}\frac{q_{h}}{h}h^{2}%
=h^{1+s-s/m}. \label{eq:D-sharp}%
\end{equation}
Since
\[
s(1-\frac{2}{m})=\frac{s\left(  m-2\right)  }{m},\text{ }1+s-\frac{s}{m}%
=\frac{m(1+s)-s}{m},
\]
so $\left(  \ref{eq:sharp-relation}\right)  $ is proved in dimension one.

For $d>1$, define
\[
\bar{\rho}_{h}=\rho_{h}\otimes\mathcal{L}^{d-1}|_{\left[  0,1\right]  ^{d-1}%
},\text{ }\bar{T}_{h}\left(  x_{1},x^{\prime}\right)  =\left(  T_{h}\left(
x_{1}\right)  ,0\right)  .
\]
Up to a null set, the map is the gradient of the convex function
\[
\bar{u}_{h}\left(  x_{1},x^{\prime}\right)  =\int_{0}^{x_{1}}T_{h}\left(
t\right)  dt,
\]
which depends only on $x_{1}$. Swapping $I_{-}\times\left[  0,1\right]
^{d-1}$ and $I_{+}\times\left[  0,1\right]  ^{d-1}$ does not change
$x^{\prime}$-coordinates, so the calculations in the one-dimensional case are
unchanged, particularly the $L^{p}$ norms and the quantities in $\left(
\ref{eq:E-sharp}\right)  $, $\left(  \ref{eq:D-sharp}\right)  $ are unchanged.
Therefore the relation $\left(  \ref{eq:sharp-relation}\right)  $ holds in
general dimension.
\end{proof}

The next examples have no class uniform modulus. Their objective gaps
converges to zero while their map errors stay bounded away from zero.

\begin{theorem}
\label{thm:extremes}If $p=1$ or $m=2,$ there is no class uniform modulus for
which the bound of the form $\left(  \ref{eq:power-full}\right)  $ in Theorem
\ref{thm:power} holds, even for compact targets.
\end{theorem}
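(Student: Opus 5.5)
We construct explicit families, modeled on the swap construction in the proof of Theorem~\ref{thm:sharp}, in which $\mathcal{D}_{\nu_h}(\sigma_h)\to0$ while $\mathcal{E}_{\nu_h}(\sigma_h)$ stays bounded below by a positive constant. We work first in $d=1$ and then tensorize with $\mathcal{L}^{d-1}|_{[0,1]^{d-1}}$ exactly as in that proof. The tensorization changes neither the $L^1$/$L^p$ norms, nor the moments, nor the quantities computed through $(\ref{eq:E-lift})$ and $(\ref{eq:D-corr})$. In both cases we use the step map $T_h=R_h1_{[1-h,1]}$, which is the derivative of the convex function $R_h\max\{0,x-(1-h)\}$ and hence is the Brenier map. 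We also use the swap $S_h$ of two adjacent intervals $I_-=[1-2h,1-h]$ and $I_+=[1-h,1]$ that carry equal density, which preserves $\rho_h$. Exactly as in $(\ref{eq:E-sharp})$ and $(\ref{eq:D-sharp})$, with $q_h$ the mass of each interval,
\[
\mathcal{E}_{\nu_h}(\sigma_h)=2q_hR_h^2,\qquad \mathcal{D}_{\nu_h}(\sigma_h)=R_h q_h h .
\]

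\emph{Case $p=1$ (compact target).} Fix $R_h=1$, so that $\nu_h=(1-q_h)\delta_0+q_h\delta_1$ is supported in $B(0,1)$ uniformly in $h$. Take $q_h=1/4$ fixed and put density $q_h/h$ on $I_\pm$, with the remaining mass $1/2$ spread uniformly on $[0,1-2h)$. Then $\|f_h\|_{L^1}=1$ and the support stays in $[0,1]$, while
\[
\mathcal{E}=1/2,\qquad \mathcal{D}=h/4\to0 .
\]
So the class $\{f\in L^1\}$ with compact targets admits no vanishing modulus. The densities are unbounded in every $L^p$ with $p>1$, which is consistent with Theorem~\ref{thm:power}. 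One could alternatively phrase this through $s=0$ in $(\ref{eq:theta})$, where the exponent degenerates to $0$.

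\emph{Case $m=2$.} Take $p=\infty$ and $f_h\equiv1$ on $[0,1]$, so $q_h=h$. Choose $R_h=h^{-1/2}$, so that
\[
\int|y|^2d\nu_h=q_hR_h^2=1
\]
uniformly. Then
\[
\mathcal{E}=2q_hR_h^2=2,\qquad \mathcal{D}=R_hq_hh=h^{3/2}\to0 .
\]
This shows that a uniform second moment bound gives no vanishing modulus, again matching the degeneration $\theta_{s,2}=0$.

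The phrase ``even for compact targets'' needs care when $m=2$. For a fixed compact support, Corollary~\ref{cor:compact-target} does give a modulus, so the claim must be read as targets that are individually compact but not uniformly so. The $\nu_h$ above are two-point measures, hence each is compact, while their supports escape to infinity. I would state this interpretation explicitly. The main obstacle is precisely this interpretive point rather than any computation: the argument must make clear that the class is defined by the uniform $L^p$ (respectively $m$-th moment) bound of Theorem~\ref{thm:power}, and that compactness holds for each individual target but not uniformly. Once that is fixed, the remaining verifications (pushforward, convexity of the potentials, the formulas above) are identical to those in the proof of Theorem~\ref{thm:sharp}.
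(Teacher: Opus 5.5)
Your proposal is correct and follows the paper's proof: the same swap construction in $d=1$ with tensorization, using a fixed $q_0\in(0,1/2)$ and $R_h=1$ for $p=1$ (so $\mathcal{E}=2q_0$ and $\mathcal{D}=q_0h\to0$), and using $p=\infty$, $q_h=h$, $R_h=h^{-1/2}$ for $m=2$ (so $\mathcal{E}=2$ and $\mathcal{D}=h^{3/2}\to0$). Your interpretive remark is also accurate. The paper's uniformly bounded target supports appear only in the $p=1$ example. For $m=2$ the two-point targets are each compact but their supports escape, and this is unavoidable, since uniformly compact targets together with a bounded source density would give a vanishing modulus by Corollary~\ref{cor:compact-target}.
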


\begin{proof}
The examples run parallel to Theorem \ref{thm:sharp}. Restrict first to $d=1$
and extend to higher dimension by tensorizing. For the extreme case $p=1$. Let
$q_{h}$ to a fixed $q_{0}\in\left(  0,1/2\right)  $ and take $R_{h}=1$. The
constructions of $\rho_{h}=f_{h}dx,$ $\nu_{h},$ $T_{h},$ $\sigma_{h}$ and
calculations in Theorem \ref{thm:sharp} remain valid except that we need to
replace $q_{h}$ with $q_{0}$. In particular, each $\rho_{h}$ is supported in
$\left[  0,1\right]  $ and each target $\nu_{h}$ in $\left\{  0,1\right\}  $.
Therefore all source and target moments, of any finite order, are bounded
uniformly. Hence we get from $\left(  \ref{eq:E-sharp}\right)  $ and $\left(
\ref{eq:D-sharp}\right)  $ that%
\[
\mathcal{E}_{\nu_{h}}\left(  \sigma_{h}\right)  =2q_{0},\text{ }%
\mathcal{D}_{\nu_{h}}\left(  \sigma_{h}\right)  =R_{h}\frac{q_{0}}{h}%
h^{2}=q_{0}h\rightarrow0.
\]
This proves that boundedness of $L^{1}$-norm of the source measure alone
cannot provide a class-uniform vanishing modulus, even if the source has
densities and targets have uniformly bounded supports.

For extreme case $m=2$. The constructions and calculations in Theorem
\ref{thm:sharp} continue to hold if we set $p=\infty.$ Therefore $s=1$,
$q_{h}=h$ and $\rho_{h}$ is uniform on $\left[  0,1\right]  $. Set
$R_{h}=q_{h}^{-1/2}$. Then we see that the targets have uniformly bounded
second moments,
\[
\int\left\vert y\right\vert ^{2}d\nu_{h}=q_{h}R_{h}^{2}=1.
\]
However,
\[
\mathcal{E}_{\nu_{h}}\left(  \sigma_{h}\right)  =2,\text{ }\mathcal{D}%
_{\nu_{h}}\left(  \sigma_{h}\right)  =q_{h}^{1/2}h\rightarrow0.
\]

\end{proof}

\subsection{Sharpness of Theorem~\ref{thm:unbounded-power}}

\begin{theorem}
\label{thm:escaping-slabs}Fix $d\geqslant1$, $1<p\leqslant\infty$,
$n\geqslant2$ and $m>2$. Let
\[
s=1-\frac{1}{p},\text{ }\eta=\frac{sn}{n+(d-1)s},\text{ }\Theta=\frac
{\eta(m-2)}{m(1+\eta)-\eta}.
\]
There exist $\rho_{h}=f_{h}dx$, targets $\nu_{h}$ and deterministic feasible
plans $\sigma_{h}$ for all small $h>0$, such that
\[
\sup_{h}\left\Vert f_{h}\right\Vert _{p}<\infty,\text{ }\sup_{h}\int\left\vert
x\right\vert ^{n}\,d\rho_{h}<\infty,\int\left\vert y\right\vert ^{m}\,d\nu
_{h}=1,\text{ }\mathcal{E}_{\nu_{h}}\left(  \sigma_{h}\right)  =2\mathcal{D}%
_{\nu_{h}}\left(  \sigma_{h}\right)  ^{\Theta}.
\]
The objective gaps converge to zero, and all pairs satisfy
Assumption~\ref{ass:brenier}. Consequently the exponent in
Theorem~\ref{thm:unbounded-power} is sharp for classes with uniform bounds and
no common source support radius. For $d>1$, the source profile exponent $\eta$
defined in $\left(  \ref{eq:A-eta}\right)  $, which is part of the exponent in
Theorem~\ref{thm:unbounded-power}, is also sharp for this class.
\end{theorem}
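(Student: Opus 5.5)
The construction is the escaping-slab version of Theorem~\ref{thm:sharp}. The two thin slabs carrying the swapped mass are moved to distance $S=S(h)\to\infty$ from the origin, and each is given a cross-section of size $S^{d-1}$. The slab mass $q=q_h$ is then set as large as the $L^{p}$ bound and the $n$-th moment bound allow together. I work in the $x_1$-direction throughout, with $S\geqslant 2$ and $h<1/4$. Set
\[
I_{-}=[S-2h,S-h],\quad I_{+}=[S-h,S],\quad P_{\pm}=I_{\pm}\times[0,S]^{d-1}.
\]
Put density $q/(hS^{d-1})$ on $P_{-}\cup P_{+}$, so each slab has mass $q$, and put the remaining mass $1-2q$ uniformly on $[0,1]^{d}$. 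The two pieces are disjoint because $S-2h>1$.

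The parameters are fixed by two scaling constraints. The $L^{p}$ contribution of the slabs is $(q/(hS^{d-1}))^{p}\,2hS^{d-1}$, which is bounded uniformly exactly when $q\leqslant (hS^{d-1})^{s}$ (for $p=\infty$ this reads $q\leqslant hS^{d-1}$). The $n$-th moment of the slabs is at most $2q(\sqrt{d}S)^{n}$, which is bounded when $q\leqslant S^{-n}$. I equate the two:
\[
S=h^{-s/(n+(d-1)s)},\qquad q=S^{-n}=h^{sn/(n+(d-1)s)}=h^{\eta}.
\]
In $d=1$ this gives $S^{n+0}=h^{-s}$, so $q=h^{s}=h^{\eta}$ again.

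The map, target and plan copy Theorem~\ref{thm:sharp}. Take $R_h=q^{-1/m}$ and $T_h=\nabla u_h$ with
\[
u_h(x)=R_h\max\{0,x_1-(S-h)\}.
\]
Then $T_h=R_he_1$ on $P_{+}$ and $T_h=0$ elsewhere, so $\nu_h=(1-q)\delta_0+q\delta_{R_he_1}$ and $\int|y|^{m}d\nu_h=qR_h^{m}=1$. Since $\rho_h\ll dx$, Brenier's theorem makes $T_h$ the optimal map, and Assumption~\ref{ass:brenier} holds because $u_h$ is Lipschitz and $\nu_h$ is finitely supported.

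Let $S_h$ swap $P_{-}$ and $P_{+}$ by translation $\pm he_1$. It preserves $\rho_h$ because the two slabs carry equal densities. Set $\sigma_h=(\operatorname{Id},T_h\circ S_h)_{\#}\rho_h$. By \eqref{eq:Eintro-map}, and by \eqref{eq:D-corr} with $q=(\operatorname{Id},S_h)_{\#}\rho_h$:
\[
\mathcal{E}_{\nu_h}(\sigma_h)=2qR_h^{2}=2h^{\eta(m-2)/m},
\qquad
\mathcal{D}_{\nu_h}(\sigma_h)=R_h\,q\,h=h^{[m(1+\eta)-\eta]/m}.
\]
For the gap, the $x'$-coordinates cancel, and each slab of mass $q$ is displaced by $h$ in $x_1$ against a slope jump $R_h$. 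The exponent bookkeeping is the same as in Theorem~\ref{thm:sharp} with $s$ replaced by $\eta$. It gives $\mathcal{E}_{\nu_h}(\sigma_h)=2\mathcal{D}_{\nu_h}(\sigma_h)^{\Theta}$, and $\mathcal{D}_{\nu_h}(\sigma_h)\to0$.

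For the sharpness of $\eta$ itself when $d>1$, I would rerun the same source with a bounded target, $R_h\equiv1$. This gives $\mathcal{E}=2h^{\eta}$ and $\mathcal{D}=h^{1+\eta}$, so $\mathcal{E}=2\mathcal{D}^{\eta/(1+\eta)}$. That matches Corollary~\ref{cor:moment-compactE} exactly, so $\eta$ cannot be improved in the profile bound \eqref{eq:A-Lp-moment-Opt}. A direct check is also available: with the $1$-Lipschitz convex function $v=\max\{0,x_1-(S-h)\}$, one gets $\omega_{v,h}=1$ on $P_{-}\cup P_{+}$, hence $\mathfrak{A}_{\rho_h}(h)\geqslant 2q=2h^{\eta}$.

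The main obstacle is step one, where the two constraints must be balanced simultaneously. One has to confirm that the $L^{p}$ and moment bounds really are uniform in $h$, including the bulk part and the endpoint $p=\infty$. One must also confirm that the slab cross-section $[0,S]^{d-1}$ is the right choice: the $S^{d-1}$ factor is exactly what produces the $(d-1)s$ in $\eta$.
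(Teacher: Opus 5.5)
Your proof is correct and follows the paper's approach. The paper's argument (the Step~5 construction in the proof of Theorem~\ref{thm:class-law}) uses the same ingredients: two slabs of width $h$ straddling the kink of $R\max\{0,x_1-c\}$, each with cross-section $S^{d-1}$ and mass $q$, the scalings $q=S^{-n}$ and $hS^{d-1}=q^{1/s}$ (so $q=h^{\eta}$), the swap plan, and a kinked $1$-Lipschitz test function giving $\mathfrak{A}_{\rho_h}(h)\gtrsim h^{\eta}$. The differences are only cosmetic: the paper keeps the kink at $x_1=0$ and sends the slabs off transversally into $[S,2S]^{d-1}$, and it tests with $|x_1|$ to get $8h^{\eta}$; your bounded-target variant is an extra observation showing that the exponent $\eta/(1+\eta)$ in Corollary~\ref{cor:moment-compactE} is attained.
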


\begin{proof}
We use the construction from Step 5 in the proof of
Theorem~\ref{thm:class-law}. That construction is proved directly from the
error bound and source estimates and does not use the present theorem. With
$\alpha=\xi=K=1$ and $\beta=0$,
\[
S=q^{-1/n},\text{ }h=q^{1/\eta},\text{ }q=R^{-m},\text{ }u_{h}\left(
x\right)  =R\max\left\{  0,x_{1}\right\}  .
\]
We now use the formulas from $\left(  \ref{eq:log-lower-E}\right)  $ and
$\left(  \ref{eq:log-lower-D}\right)  $ to compute the exact error and
objective gap,%
\[
\mathcal{E}_{\nu_{h}}(\sigma_{h})=2qR^{2}=2h^{\eta(1-2/m)},\text{ }%
\mathcal{D}_{\nu_{h}}(\sigma_{h})=hqR=h^{1+\eta-\eta/m}.
\]
Since
\[
\frac{\eta(1-2/m)}{1+\eta-\eta/m}=\frac{\eta(m-2)}{m(1+\eta)-\eta}=\Theta,
\]
so the construction satisfies all requirements and saturates the exponent in
Theorem~\ref{thm:unbounded-power}.

For the sharpness of exponent $\eta$ defined in $\left(  \ref{eq:A-eta}%
\right)  $, consider the convex $1$-Lipschitz function $v\left(  x\right)
=\left\vert x_{1}\right\vert $ which has $\omega_{v,h}\left(  x\right)  =2$ on
$A_{-}\cup A_{+}$. Each ball with radius $h$ that is admissible to the
defintion of $\omega_{v,h}\left(  x\right)  $ meets $x_{1}=0$, at which the
subdifferential of $v$ contains $e_{1}$ and $-e_{1}$. $A_{-}\cup A_{+}$ has
total source mass $2q$, so
\[
\mathfrak{A}_{\rho_{h}}\left(  h\right)  \geqslant4\left(  2q\right)
=8h^{\eta}.
\]
This together with the uniform upper estimate $\left(
\ref{eq:A-Lp-moment-Opt}\right)  $ therefore gives the matching order.
\end{proof}

\section{Sharp conditioning with logarithmic moments}

\label{sec:class-law}\label{sec:class-proof}

Throughout this section, we fix $d\geqslant1$, $1<p\leqslant\infty$, and
$n\geqslant2$. Define
\[
s=1-1/p,\text{ }\eta=\frac{sn}{n+\left(  d-1\right)  s},\text{ }F_{p}=\left\{
\begin{array}
[c]{ll}%
3^{1/p}, & p<\infty,\\
1, & p=\infty,
\end{array}
\right.  ,\text{ }M_{d,n}=\left(  d+3\right)  ^{n/2}+2\left(  4d-3\right)
^{n/2}.
\]
Fix $m\geqslant2$, $\beta\geqslant0$ and scaling constants $\alpha\geqslant1$,
$\xi\geqslant1$, $K>0$. Let $\mathcal{C}_{m,\beta}=\mathcal{C}_{m,\beta
}\left(  \alpha,\xi,K\right)  $ be the class consisting of probability pairs
$\left(  \rho,\nu\right)  $ satisfying
\begin{equation}
\rho=fdx,\text{ }\left\Vert f\right\Vert _{p}\leqslant\alpha F_{p},\text{
}\int\left\vert x\right\vert ^{n}d\rho\leqslant\xi M_{d,n},\text{ }%
\int\left\vert y\right\vert ^{m}\left[  \log\left(  e+\left\vert y\right\vert
\right)  \right]  ^{\beta}d\nu\leqslant K. \label{eq:normalized-class}%
\end{equation}

Note that each pair $\left(  \rho,\nu\right)  $ in the class has $\rho,\nu
\in\mathcal{P}_{2}$. The class $\mathcal{C}_{m,\beta}\left(  \alpha
,\xi,K\right)  $ is non-empty for all scaling choice $\alpha\geqslant1$,
$\xi\geqslant1$, $K>0$. Indeed, let $\rho=dx$ on%
\[
F=\left[  -2,-1\right]  \times\left[  0,1\right]  ^{d-1}.
\]
It has $L^{p}$-norm one and $n$-th moment at most $\left(  d+3\right)
^{n/2}.$ Let $\nu=\delta_{0}$. Then $\left(  \rho,\nu\right)  \in
\mathcal{C}_{m,\beta}\left(  1,1,1\right)  .$

Define for a class $\mathcal{C}$ of probaility pairs, $t>0,$
\begin{equation}
\Omega_{\mathcal{C}}\left(  t\right)  =\left\{  \mathcal{E}_{\nu}\left(
\sigma\right)  :\left(  \rho,\nu\right)  \in\mathcal{C},\text{ }\sigma\in
\Pi\left(  \rho,\nu\right)  ,\text{ }\mathcal{D}_{\nu}\left(  \sigma\right)
\leqslant t\right\}  . \label{eq:class-modulus}%
\end{equation}

\begin{theorem}
[Sharp power-logarithm conditioning law]\label{thm:class-law}For the fixed
non-empty class in $\left(  \ref{eq:normalized-class}\right)  $, let
\[
H=m(1+\eta)-\eta,\text{ }\Theta=\frac{\eta(m-2)}{H},\text{ }\Lambda
=\frac{\beta(2+\eta)}{H}.
\]
Define
\begin{equation}
\varphi(t)=t^{\Theta}[\log(e/t)]^{-\Lambda},\text{ }0<t<1. \label{eq:phiDef}%
\end{equation}
Then the following hold.

$\left(  i\right)  $ There exist $c,$ $C,$ $t_{0}>0$, depending only on $d,$
$p,$ $n,$ $m,$ $\beta,$ $\alpha,$ $\xi,$ $K$, with $t_{0}<1$, such that
\begin{equation}
c\varphi(t)\leqslant\Omega_{\mathcal{C}_{m,\beta}}(t)\leqslant C\varphi
(t),\text{ }0<t<t_{0}. \label{eq:sharp-class-law}%
\end{equation}
The lower bound is realized by deterministic feasible plans with objective gap
exactly $t$.

$\left(  ii\right)  $ The same matching bounds $\left(
\ref{eq:sharp-class-law}\right)  $ hold when the map error $\mathcal{E}_{\nu
}\left(  \sigma\right)  $ is replaced in $\left(  \ref{eq:class-modulus}%
\right)  $ by barycentric map error $\Vert\bar{T}_{\sigma}-T_{\nu}\Vert
_{L^{2}(\rho)}^{2}.$

$\left(  iii\right)  $ For $m>2,$ $\beta=0$, this consolidates the power law.
For $m=2,$ $\beta>0$, the modulus is comparable to $[\log(e/t)]^{-\beta}$. For
$m=2,\beta=0$, no class-uniform vanishing modulus exists.
\end{theorem}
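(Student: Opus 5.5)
The plan is to prove the upper bound from Theorem~\ref{thm:error-bound} and the lower bound by an escaping-slab construction, then transfer both to barycentric errors.

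\textbf{Upper bound.} I start from Theorem~\ref{thm:error-bound} with center $b=0$. The source term is controlled by Proposition~\ref{prop:Lp-moment}: the bounds $\|f\|_p\leqslant\alpha F_p$ and $\int|x|^n d\rho\leqslant\xi M_{d,n}$ give $\mathfrak{A}_\rho(h)\leqslant B h^\eta$, with $B$ depending only on the class parameters. For the target tail I replace Proposition~\ref{prop:target-moment} by its logarithmic analogue. For $|y|>R\geqslant 1$ we have $|y|^2\leqslant R^{2-m}[\log(e+R)]^{-\beta}|y|^m[\log(e+|y|)]^\beta$, since $r\mapsto r^{m-2}[\log(e+r)]^\beta$ is nondecreasing; this also covers $m=2$. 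Hence $\mathfrak{T}_{\nu,0}(R)\leqslant 8KR^{2-m}[\log(e+R)]^{-\beta}$. Minimizing $4Rt/h+18BR^2h^\eta$ over $h$ gives $CR^{(2+\eta)/(1+\eta)}t^{\eta/(1+\eta)}$, so that
\[
\mathcal{E}\leqslant C R^{(2+\eta)/(1+\eta)}t^{\eta/(1+\eta)}+8KR^{2-m}[\log(e+R)]^{-\beta}.
\]
I will not optimize this exactly. Instead I choose $R=t^{-\eta/H}[\log(e/t)]^{-\gamma}$ with $\gamma=\beta(1+\eta)/H$, so that $\log R\asymp\log(e/t)$ for small $t$. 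A direct exponent check then gives both terms of order $t^\Theta[\log(e/t)]^{-\Lambda}$. Two identities are used: $(2+\eta)/(1+\eta)\cdot\eta/H-\eta/(1+\eta)=-\Theta$, and the log exponents balance because $\gamma(2+\eta)/(1+\eta)=\beta(2+\eta)/H=\Lambda$ while $\beta-\gamma(m-2)=\beta(2+\eta)/H$. When $m=2$ the first term carries the power $t^0$, so we obtain $[\log(e/t)]^{-\beta}$, since then $H=2+\eta$ and $\Lambda=\beta$. The upper bound for the barycentric error in (ii) follows immediately from \eqref{eq:variance-decomposition}.

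\textbf{Lower bound.} I use escaping slabs adapting Theorem~\ref{thm:sharp}, and this is where the normalizing constants $F_p$ and $M_{d,n}$ get used. Given $q$ and $h$, take a base slab of unit density, for instance $[-2,-1]\times[0,1]^{d-1}$ adjusted to carry mass $1-2q$. Add two adjacent slabs $A_-=[-h,0]\times[0,S]^{d-1}$ and $A_+=[0,h]\times[0,S]^{d-1}$, each of mass $q$, at distance $S=q^{-1/n}$ so that the $n$-th moment stays bounded. Their density is $q/(hS^{d-1})$, and choosing $h=q^{1/\eta}$ makes the $L^p$ contribution $2hS^{d-1}(q/(hS^{d-1}))^p$ bounded; checking this is exactly where $\eta$ comes from. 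Take $u=R\max\{0,x_1\}$, so the target is $\nu=(1-q)\delta_0+q\delta_{Re_1}$. Imposing $qR^m[\log(e+R)]^\beta=K$ gives $q\approx R^{-m}(\log R)^{-\beta}$. The swap plan $\sigma$ exchanging $A_\pm$ by translation by $h$ yields $\mathcal{E}=2qR^2$ and $\mathcal{D}=qRh$, computed via \eqref{eq:D-corr}. Setting $t=qRh$ and inverting the relation between $t$ and $R$ (with $\log R\asymp\log(1/t)$) gives $\mathcal{E}\asymp\varphi(t)$. To get gap exactly $t$, I use continuity of $t$ in the parameter $R$.

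\textbf{Barycentric error and case (iii).} The swap plan is deterministic, so $V_\sigma=0$ and the barycentric error equals $\mathcal{E}$; this proves (ii). In (iii), the cases $m>2$, $\beta=0$ and $m=2$, $\beta>0$ are specializations of (i). For $m=2$, $\beta=0$ I cite Theorem~\ref{thm:extremes}, after verifying that its example lies in the class up to the scaling constants; if not, I rerun the slab example with $q$ fixed.

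\textbf{Main obstacle.} I expect the hardest part to be the bookkeeping in the lower construction: the $L^p$ and moment normalizations (the factors $3^{1/p}$ and $M_{d,n}$) must hold uniformly, and the implicit log inversion must produce constants depending only on the class parameters.
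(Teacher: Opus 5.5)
Your proposal is correct and follows the paper's proof essentially step for step: the same $b=0$ application of Theorem~\ref{thm:error-bound} with Proposition~\ref{prop:Lp-moment} and the logarithmic squared-tail bound, and the same radius $R=t^{-\eta/H}[\log(e/t)]^{-\beta(1+\eta)/H}$ (the paper writes it in the rescaled variable $t/K^{1+1/\eta}$, which only changes constants). It also uses the same escaping-slab swap example with $S=q^{-1/n}$, $h=q^{1/\eta}$, $qR^m[\log(e+R)]^\beta=K$; your cross-section $[0,S]^{d-1}$ instead of the paper's $[S,2S]^{d-1}$ works equally well.

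For $m=2$, $\beta=0$ you need neither Theorem~\ref{thm:extremes} nor your fallback with $q$ fixed. That fallback would in fact fail inside the class: with $q$ fixed, the $L^p$ and $n$-th moment constraints keep $h$ bounded below, so $\mathcal{E}=2qR^2\lesssim\mathcal{D}^2\to0$. Your general slab family taken at $m=2$ already gives $\mathcal{E}=2K$ with $\mathcal{D}\to0$, which is exactly how the paper concludes.
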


\begin{proof}
\textbf{1}. \textit{Compute the class-uniform source bound}. Fix $0<t<1$, any
$(\rho,\nu)\in\mathcal{C}_{m,\beta}$, and any $\sigma\in\Pi(\rho,\nu)$ with
$\mathcal{D}_{\nu}(\sigma)\leqslant t$. We first prove an estimate independent
of this choice. Since $n\geqslant2$, we have
\[
\int\left\vert x\right\vert ^{2}d\rho\leqslant\left(  \int\left\vert
x\right\vert ^{n}d\rho\right)  ^{2/n}<\infty.
\]
Since $\log\left(  e+r\right)  \geqslant1$ for $r\geqslant0$, the target
constraint implies $\int\left\vert y\right\vert ^{m}\,d\nu\leqslant K$. Hence
by H\"{o}lder's inequality and that $m\geqslant2,$ we have $\int\left\vert
y\right\vert ^{2}d\nu\leqslant K^{2/m}$. These ensure that the Brenier setting
of Assumption~\ref{ass:brenier} holds for each pair in $\mathcal{C}_{m,\beta}%
$. In particular, Theorem \ref{thm:error-bound} is applicable. Since error
upper bound in Proposition~\ref{prop:Lp-moment} is nondecreasing in both
$\Vert f\Vert_{p}$ and $\int|x|^{n}d\rho$, we can substitute their bounds
$\alpha F_{p},$ $\xi M_{d,n}$ into $\left(  \ref{eq:A-Lp-moment-Opt}\right)
$\ with souce center $a=0$ to obtain,
\[
\mathfrak{A}_{\rho}\left(  h\right)  \leqslant Bh^{\eta},\text{ }h>0,
\]
where $B=c_{r,n}(K_{d,p}^{L^{p}}\alpha F_{p})^{n/(n+r)}\left(  4\xi
M_{d,n}\right)  ^{r/\left(  n+r\right)  }$ if $d>0,$ $K_{1,p}^{L^{p}}\alpha
F_{p}$ if $d=1.$ The constant is strictly positive and depends only on $d,$
$p,$ $n,$ $\alpha,$ $\xi$.

\textbf{2}. \textit{Derive the target squared-tail bound}. Let $R\geqslant1$
denote the target tail cutoff radius. Since $m\geqslant2$ and $\beta
\geqslant0$, the function $z\mapsto z^{m-2}\left[  \log\left(  e+z\right)
\right]  ^{\beta}$ is nondecreasing for $z\geqslant1$. Hence using the target
constraints $\left(  \ref{eq:normalized-class}\right)  $ we have%
\begin{align}
\tau_{2,\nu,0}\left(  R\right)   &  =\int_{\left\{  \left\vert y\right\vert
>R\right\}  }\left\vert y\right\vert ^{2}d\nu\left(  y\right)
\label{eq:weighted-tail}\\
&  \leqslant R^{2-m}\left[  \log\left(  e+R\right)  \right]  ^{-\beta}%
\int_{\left\{  \left\vert y\right\vert >R\right\}  }\left\vert y\right\vert
^{m}\left[  \log\left(  e+\left\vert y\right\vert \right)  \right]  ^{\beta
}d\nu\left(  y\right) \nonumber\\
&  \leqslant KR^{2-m}\left[  \log\left(  e+R\right)  \right]  ^{-\beta
}.\nonumber
\end{align}

\textbf{3}. \textit{Choose the right source scale}. With the estimates from
the preceding steps, we employ Theorem~\ref{thm:error-bound} with $b=0$ to
get
\[
\mathcal{E}_{\nu}\left(  \sigma\right)  \leqslant4Rth^{-1}+18BR^{2}h^{\eta
}+8KR^{2-m}\left[  \log\left(  e+R\right)  \right]  ^{-\beta},\text{
}R\geqslant1,\ h>0.
\]
For fixed $R,$ we apply Lemma~\ref{lem:two-power} to the sum of the first two
terms and obtain%
\begin{equation}
\mathcal{E}_{\nu}(\sigma)\leqslant B_{\eta}t^{\eta/(1+\eta)}R^{(2+\eta
)/(1+\eta)}+8KR^{2-m}[\log(e+R)]^{-\beta}, \label{eq:weighted-preopt}%
\end{equation}
where $B_{\eta}=c_{\eta,1}18^{1/(1+\eta)}4^{\eta/(1+\eta)}B^{1/(1+\eta)}.$

\textbf{4}. \textit{Choose the right target radius to balance exponents}.
Dividing $\left(  \ref{eq:weighted-preopt}\right)  $ by $K>0$ gives%
\[
\frac{\mathcal{E}_{\nu}(\sigma)}{K}\leqslant B_{\eta}z^{\eta/(1+\eta
)}R^{(2+\eta)/(1+\eta)}+8R^{2-m}[\log(e+R)]^{-\beta},
\]
where we have made the substitution%
\[
z=t/\kappa,\text{ }\kappa=K^{1+1/\eta}.
\]
We choose $R$ for this expression and then return to $t$. Write $L_{z}%
=\log\left(  e/z\right)  $ and
\[
a_{\ell}=\eta/H>0,\text{ }b_{\ell}=\beta\left(  1+\eta\right)  /H\geqslant0.
\]
To balance the exponents, we choose $z$ of the form
\begin{equation}
R=z^{-a_{\ell}}L_{z}^{-b_{\ell}}. \label{eq:log-scale}%
\end{equation}
Let
\[
L_{\ast}=\max\left\{  4,16\left(  b_{_{\ell}}/a_{_{\ell}}\right)
^{2}\right\}  .
\]
Define $t_{\mathrm{up}}^{\prime}=e^{1-L_{\ast}}$ and
\begin{equation}
t_{\mathrm{up}}=\min\left\{  \kappa t_{\mathrm{up}}^{\prime},\exp\left(
1-\max\left\{  2,2\left\vert \log\kappa\right\vert \right\}  \right)
\right\}  . \label{eq:explicit-threshold}%
\end{equation}
We show that these definitions guarantee $R\geqslant1$ and $\log(e+R)\approx
L_{z}$ whenever $0<t\leqslant t_{\mathrm{up}}$. To prove it, we take $t$ in
this range, then the definition of $t_{\mathrm{up}}$ yields that
$0<z=t/\kappa\leqslant t_{\mathrm{up}}^{\prime}<1$, so $L_{z}=1-\log
z\geqslant L_{\ast}$. This together with the definition of $L_{\ast}$ gives
\[
L_{z}\geqslant4,\text{ }\sqrt{L_{z}}\geqslant4\frac{b_{\ell}}{a_{\ell}}.
\]
Note that $\log L\leqslant\sqrt{L}$ for any $L\geqslant1$. Multiplying the
inquality by $b_{\ell}\geqslant0$ and using the preceding inequalities we
therefore get
\[
b_{\ell}\log L_{z}\leqslant b_{\ell}\sqrt{L_{z}}=\frac{b_{\ell}}{\sqrt{L_{z}}%
}L_{z}\leqslant\frac{a_{\ell}}{4}L_{z}.
\]
These inequalities also hold when $b_{\ell}=0$. Since $L_{z}/4\geqslant1,$
\[
a_{\ell}\left(  L_{z}-1\right)  \geqslant a_{\ell}\left(  L_{z}-L_{z}%
/4\right)  =\frac{3a_{\ell}}{4}L_{z}.
\]
Since $-\log z=L_{z}-1$, $\left(  \ref{eq:log-scale}\right)  $ now implies
\[
\log R=a_{\ell}\left(  L_{z}-1\right)  -b_{\ell}\log L_{z}\geqslant
\frac{3a_{\ell}}{4}L_{z}-\frac{a_{\ell}}{4}L_{z}=\frac{a_{\ell}}{2}L_{z}>0.
\]
Thus $R\geqslant1.$ On the other hand, $b_{\ell}\log L_{z}\geqslant0$, so
$\log R\leqslant a_{\ell}(L_{z}-1)\leqslant a_{\ell}L_{z}$. Hence%
\begin{align*}
\log(e+R)  &  \leqslant\log((1+e)R)=\log(1+e)+\log R\\
&  \leqslant\log(1+e)+a_{\ell}L_{z}\leqslant\lbrack\log(1+e)+a_{\ell}]L_{z}.
\end{align*}
Combing these inequalities we see that%
\begin{equation}
\frac{a_{\ell}}{2}L_{z}\leqslant\log R\leqslant\log\left(  e+R\right)
\leqslant\left[  \log(1+e)+a_{\ell}\right]  L_{z}.
\label{eq:explicit-log-comparison}%
\end{equation}
This confirms that $\log(e+R)\approx L_{z}$. We next substitute the radius
$\left(  \ref{eq:log-scale}\right)  $ into the upper bound of $\mathcal{E}%
_{\nu}\left(  \sigma\right)  /K$. For the first term, the substitution is
\begin{align*}
B_{\eta}z^{\eta/(1+\eta)}R^{(2+\eta)/(1+\eta)}  &  =B_{\eta}z^{\eta/(1+\eta
)}(z^{-a_{\ell}}L_{z}^{-b_{\ell}})^{(2+\eta)/(1+\eta)}\\
&  =B_{\eta}z^{\eta/(1+\eta)-a_{\ell}(2+\eta)/(1+\eta)}L_{z}^{-b_{\ell}%
(2+\eta)/(1+\eta)}.
\end{align*}
Using the definitions of $a_{\ell}$ and $b_{\ell}$, the exponents are
simplified as%
\begin{align*}
\frac{\eta}{1+\eta}-\frac{a_{\ell}(2+\eta)}{1+\eta}  &  =\frac{\eta\lbrack
H-(2+\eta)]}{H(1+\eta)}=\frac{\eta\lbrack m(1+\eta)-\eta-2-\eta]}{H(1+\eta)}\\
&  =\frac{\eta(m-2)(1+\eta)}{H(1+\eta)}=\frac{\eta(m-2)}{H}=\Theta,
\end{align*}%
\[
-\frac{b_{\ell}(2+\eta)}{1+\eta}=-\frac{\beta(1+\eta)}{H}\frac{2+\eta}{1+\eta
}=-\frac{\beta(2+\eta)}{H}=-\Lambda.
\]
Thus the first substitution is exactly $B_{\eta}z^{\Theta}L_{z}^{-\Lambda}$.
For the second term,
\[
R^{2-m}=(z^{-a_{\ell}}L_{z}^{-b_{\ell}})^{2-m}=z^{a_{\ell}(m-2)}L_{z}%
^{b_{\ell}(m-2)}.
\]
Since $\beta\geqslant0$ and both logarithms in $\left(
\ref{eq:explicit-log-comparison}\right)  $ are positive,
\[
\lbrack\log(e+R)]^{-\beta}\leqslant(a_{\ell}/2)^{-\beta}L_{z}^{-\beta}.
\]
Multipliplying the inequality by the previous equation we have
\[
8R^{2-m}[\log(e+R)]^{-\beta}\leqslant8(a_{\ell}/2)^{-\beta}z^{a_{\ell}%
(m-2)}L_{z}^{b_{\ell}(m-2)-\beta}.
\]
The exponents are simplified as,%
\[
a_{\ell}(m-2)=\frac{\eta(m-2)}{H}=\Theta,
\]%
\begin{align*}
b_{\ell}(m-2)-\beta &  =\frac{\beta(1+\eta)(m-2)}{H}-\frac{\beta H}{H}%
=-\frac{\beta\lbrack H-(m-2)(1+\eta)]}{H}\\
&  =-\frac{\beta\left[  m(1+\eta)-\eta-(m-2)(1+\eta)\right]  }{H}=-\frac
{\beta(2+\eta)}{H}=-\Lambda.
\end{align*}
Putting the above substitutions back into the upper bound of $\mathcal{E}%
_{\nu}(\sigma),$ we obtain
\[
\mathcal{E}_{\nu}(\sigma)\leqslant\left[  B_{\eta}+8(a_{\ell}/2)^{-\beta
}\right]  K\varphi(t/\kappa).
\]
To return to the original modulus $\varphi(t)$, we first compute
\[
1-(1+1/\eta)\Theta=1-\frac{(1+\eta)(m-2)}{H}=\frac{2+\eta}{H}.
\]
Hence recalling $\kappa=K^{1+1/\eta}$,
\[
K\varphi(t/\kappa)=K^{(2+\eta)/H}t^{\Theta}[\log(e\kappa/t)]^{-\Lambda}.
\]
Let $L_{t}=\log(e/t)$. The second condition in $\left(
\ref{eq:explicit-threshold}\right)  $ gives $L_{t}\geqslant2|\log\kappa|$,
hence
\[
\frac{1}{2}\leqslant\frac{\log(e\kappa/t)}{L_{t}}=1+\frac{\log\kappa}{L_{t}%
}\leqslant\frac{3}{2}.
\]
Because $\Lambda\geqslant0$,
\begin{equation}
(3/2)^{-\Lambda}K^{(2+\eta)/H}\varphi(t)\leqslant K\varphi(t/\kappa
)\leqslant2^{\Lambda}K^{(2+\eta)/H}\varphi(t). \label{eq:phi-comparison}%
\end{equation}
Taking the supremum over all admissible pairs and plans we have
\begin{equation}
\Omega_{\mathcal{C}_{m,\beta}}(t)\leqslant C_{\mathrm{up}}\varphi(t),\text{
}0<t\leqslant t_{\mathrm{up}}. \label{eq:explicit-upper-constant}%
\end{equation}
where%
\[
C_{\mathrm{up}}=2^{\Lambda}K^{(2+\eta)/H}\left[  B_{\eta}+8(a_{\ell
}/2)^{-\beta}\right]  .
\]
Note that so far no division by $m-2$ has occurred, so the computation covers
$m=2$.

\textbf{5}.\textit{ Construct matching lower example}. Set
\[
R_{K}=\max\{4,(16K)^{1/m}\}.
\]
For $R\geqslant R_{K}$, let
\[
S=q^{-1/n},\text{ }h=q^{1/\eta},\text{ }q=KR^{-m}L_{R}^{-\beta},\text{ }%
\]
and%
\begin{equation}
L_{R}=\log(e+R). \label{eq:Log-eR}%
\end{equation}
Since $L_{R}\geqslant1$ and $KR^{-m}\leqslant1/16$, these definitions yield
that $0<q\leqslant1/16$, $0<h<1$ and $S\geqslant1$. For $d>1$, define
\[
I_{-}=[-h,0],\text{ }I_{+}=[0,h]
\]
and
\[
F=[-2,-1]\times\lbrack0,1]^{d-1},\text{ }A_{-}=I_{-}\times\lbrack
S,2S]^{d-1},\text{ }A_{+}=I_{+}\times\lbrack S,2S]^{d-1}.
\]
For $d=1$, $F=[-2,-1],$ $A_{-}=I_{-},$ $A_{+}=I_{+}$. The sets are disjoint up
to null boundaries. Define $\rho_{R}=f_{R}dx$ with
\[
f_{R}=\left(  1-2q\right)  1_{F}+\frac{q}{hS^{d-1}}\left(  1_{A_{-}}+1_{A_{+}%
}\right)  .
\]
We now verify that $\rho_{R}\in\mathcal{C}_{m,\beta}$ for all $R\geqslant
R_{K}.$ Since $\left\vert F\right\vert =1$, $\left\vert A_{-}\right\vert
=\left\vert A_{+}\right\vert =hS^{d-1}$, hence $\int f_{R}dx=1.$ Since
$1/\eta=1/s+(d-1)/n$,
\[
hS^{d-1}=q^{1/\eta-(d-1)/n}=q^{1/s}.
\]
For $p<\infty$,
\[
\Vert f_{R}\Vert_{p}^{p}=(1-2q)^{p}+2q^{p}(hS^{d-1})^{1-p}=(1-2q)^{p}%
+2\leqslant3=F_{p}^{p}\leqslant(\alpha F_{p})^{p}.
\]
For $p=\infty$, $s=1$ and the density on $A_{-}\cup A_{+}$ becomes $q/\left(
hS^{d-1}\right)  =q/q^{1/s}=1$. So we still have%
\[
\Vert f_{R}\Vert_{\infty}=\max\left\{  \left(  1-2q\right)  ,1\right\}
\leqslant1\leqslant\alpha F_{\infty}.
\]
On $F$, $\left\vert x\right\vert ^{2}\leqslant d+3$. On $A_{-}\cup A_{+}$,
\[
\left\vert x\right\vert ^{2}\leqslant h^{2}+4\left(  d-1\right)
S^{2}\leqslant\left(  4d-3\right)  S^{2}.
\]
The latter also holds when $d=1,$ because $h\leqslant1\leqslant S$.
Consequently, using $qS^{n}=1$ we get%
\begin{align*}
\int\left\vert x\right\vert ^{n}d\rho_{R}  &  \leqslant\left(  1-2q\right)
\left(  d+3\right)  ^{n/2}+2q\left(  4d-3\right)  ^{n/2}S^{n}\\
&  \leqslant\left(  d+3\right)  ^{n/2}+2\left(  4d-3\right)  ^{n/2}%
=M_{d,n}\leqslant\xi M_{d,n}.
\end{align*}
Therefore $\rho_{R}$ satisfies all requirements of $\mathcal{C}_{m,\beta}$.
Let
\[
u_{R}(x)=R\max\{0,x_{1}\},
\]
and $T_{R}=\nabla u_{R}$ up to a null set. Thus $T_{R}=0$ on $F\cup A_{-}$ and
$T_{R}=Re_{1}$ on $A_{+}$. Define
\[
\nu_{R}=\left(  T_{R}\right)  _{\#}\rho_{R}=\left(  1-q\right)  \delta
_{0}+q\delta_{Re_{1}}.
\]
So $T_{R}$ is the unique Brenier map between $\rho_{R}$ and $\nu_{R}.$
Moreover,
\[
\int\left\vert y\right\vert ^{m}\left[  \log\left(  e+\left\vert y\right\vert
\right)  \right]  ^{\beta}d\nu_{R}=qR^{m}L_{R}^{\beta}=K.
\]
Therefore the construction verifies all three requirements in $\left(
\ref{eq:normalized-class}\right)  $.

Let $\mathcal{S}_{R}$ be the map that translates $A_{-}$ by $he_{1}$,
translates $A_{+}$ by $-he_{1}$, but keeps $F$ unchanged. We define
$\mathcal{S}_{R}$ arbitrarily on null boundaries. By the definition we have
$(\mathcal{S}_{R})_{\#}\rho_{R}=\rho_{R}$. Hence
\[
\sigma_{R}=\left(  \operatorname{Id},T_{R}\circ\mathcal{S}_{R}\right)
_{\#}\rho_{R}\in\Pi\left(  \rho_{R},\nu_{R}\right)
\]
is a deterministic feasible plan. Thus
\begin{equation}
E\left(  R\right)  \triangleq\mathcal{E}_{\nu_{R}}\left(  \sigma_{R}\right)
=2qR^{2}=2KR^{-\left(  m-2\right)  }L_{R}^{-\beta}. \label{eq:log-lower-E}%
\end{equation}
The correlation identity $\left(  \ref{eq:D-corr}\right)  $ gives,%
\begin{align}
D\left(  R\right)   &  \triangleq\mathcal{D}_{\nu_{R}}\left(  \sigma
_{R}\right)  =R\frac{q}{hS^{d-1}}S^{d-1}\left(  \int_{0}^{h}x_{1}\,dx_{1}%
-\int_{-h}^{0}x_{1}\,dx_{1}\right) \label{eq:log-lower-D}\\
&  =R\frac{q}{h}\left(  \frac{h^{2}}{2}+\frac{h^{2}}{2}\right)  =hqR=\kappa
R^{-H/\eta}L_{R}^{-\beta\left(  1+\eta\right)  /\eta}.\nonumber
\end{align}
In the last equality, we have used $m\left(  1+1/\eta\right)  -1=H/\eta$ and
$\kappa=K^{1+1/\eta}$. Define
\[
\Delta_{0}\left(  R\right)  =R^{-H/\eta}L_{R}^{-\beta\left(  1+\eta\right)
/\eta}\text{ and }D\left(  R\right)  =\kappa\Delta_{0}\left(  R\right)  .
\]

\textbf{6}. \textit{Derive exact moduli in the small objective gap regime}.
The function $D\left(  R\right)  $ is continuous and strictly decreasing on
$\left[  R_{K},\infty\right)  $, with limit zero as $R\rightarrow\infty$. Thus
for every $0<t\leqslant D\left(  R_{K}\right)  $ there is a unique $R\geqslant
R_{K}$ with $D\left(  R\right)  =t.$ Let%
\[
t_{0}\triangleq\min\left\{  t_{\mathrm{up}},D\left(  R_{K}\right)  \right\}
.
\]
Then for $0<t<t_{0},$ there is a unique $R$ with $D\left(  R\right)  =t.$ For
such $R$, $\log R\leqslant L_{R}\leqslant2\log R$. The latter inequality
follows from $e+R\leqslant R^{2}$ for $R\geqslant4$. Hence
\[
\frac{\log\left(  e/\Delta_{0}\left(  R\right)  \right)  }{L_{R}}%
=\frac{1+\left(  H/\eta\right)  \log R+\left[  \beta\left(  1+\eta\right)
/\eta\right]  \log L_{R}}{L_{R}}\geqslant\frac{H}{2\eta}.
\]
We now substitute $E\left(  R\right)  $ and $D\left(  R\right)  $ into the
definition $\left(  \ref{eq:phiDef}\right)  $ of $\varphi$ and compute%
\[
\frac{E\left(  R\right)  }{K\varphi\left(  D\left(  R\right)  /\kappa\right)
}=\frac{E\left(  R\right)  \left[  \log\left(  e/\Delta_{0}\left(  R\right)
\right)  \right]  ^{\Lambda}}{K\left[  \Delta_{0}\left(  R\right)  \right]
^{\Theta}}=\frac{2KR^{-\left(  m-2\right)  }L_{R}^{-\beta}\left[  \log\left(
e/\Delta_{0}\left(  R\right)  \right)  \right]  ^{\Lambda}}{K\left[
R^{-H/\eta}L_{R}^{-\beta\left(  1+\eta\right)  /\eta}\right]  ^{\Theta}}.
\]
To identify the powers we calculate%
\[
\left(  H/\eta\right)  \Theta=m-2,
\]%
\[
\left[  \beta\left(  1+\eta\right)  /\eta\right]  \Theta+\Lambda=\frac
{\beta\left[  \left(  1+\eta\right)  \left(  m-2\right)  +\left(
2+\eta\right)  \right]  }{H}=\beta.
\]
Therefore the previous equations continue as%
\[
\frac{E\left(  R\right)  }{K\varphi\left(  D\left(  R\right)  /\kappa\right)
}=2\left[  \frac{\log\left(  e/\Delta_{0}\left(  R\right)  \right)  }{L_{R}%
}\right]  ^{\Lambda}\geqslant2\left(  \frac{H}{2\eta}\right)  ^{\Lambda
}\triangleq c_{0}.
\]
Using the lower bound from $\left(  \ref{eq:phi-comparison}\right)  $ and
substituting $D\left(  R\right)  =t,$ we then obtain
\begin{equation}
E\left(  R\right)  \geqslant c_{0}K\varphi\left(  t/\kappa\right)  \geqslant
c_{0}\left(  3/2\right)  ^{-\Lambda}K^{\left(  2+\eta\right)  /H}%
\varphi\left(  t\right)  . \label{eq:lower-matching-E}%
\end{equation}
Thus in the desired inequality $\left(  \ref{eq:sharp-class-law}\right)  $, we
can choose the constants,
\[
c=2[H/(3\eta)]^{\Lambda}K^{(2+\eta)/H},\text{ }C=C_{\mathrm{up}}\text{ and
}t_{0}=\min\{t_{\mathrm{up}},D(R_{K})\}.
\]
Note that by the definition $\left(  \ref{eq:explicit-threshold}\right)  ,$ we
have $t_{\mathrm{up}}<1,$ so $t_{0}<1.$ The constructed coupling $\sigma_{R}$
together with its marginals satisfy all requirements of the class
$\mathcal{C}_{m,\beta}$ including the exact objective gap $D\left(  R\right)
=t$, so $E(R)\leqslant\Omega_{\mathcal{C}_{m,\beta}}(t)$. Therefore the
required lower bound for $0<t<t_{0}$ is proved.

\textbf{7}. \textit{Derive mathcing barycentric error bounds}. Let $\sigma
\in\Pi\left(  \rho,\nu\right)  $ be an arbitrary feasible plan for an
arbitrary pair $(\rho,\nu)$ in $\mathcal{C}_{m,\beta}$. Then using $\left(
\ref{eq:variance-decomposition}\right)  $ and~$\left(
\ref{eq:explicit-upper-constant}\right)  $ we get
\[
\Vert\bar{T}_{\sigma}-T_{\nu}\Vert_{L^{2}(\rho)}^{2}\leqslant\mathcal{E}_{\nu
}(\sigma)\leqslant C_{\mathrm{up}}\varphi(t).
\]
By contrast the particular deterministic plan $\sigma_{R}$ constructed in Step
5 with marginals $(\rho_{R},\nu_{R})$ is concentrated on a map, so
\[
\Vert\bar{T}_{\sigma_{R}}-T_{R}\Vert_{L^{2}(\rho_{R})}^{2}=\mathcal{E}%
_{\nu_{R}}(\sigma_{R})=E(R).
\]
Recall that in Step 6 we have chosen $R$ with $D(R)=t$ for $t$ in $\left(
0,t_{0}\right)  $. So these equations together with $\left(
\ref{eq:lower-matching-E}\right)  $ yield the desired matching modulus for
barycentric map error.

\textbf{8}. \textit{Matching bounds for the extreme cases}. In the extreme
case $m=2$, we have $H=2+\eta$, $\Theta=0$, and $\Lambda=\beta$. For $\beta
>0$, the modulus is therefore comparable to $[\log(e/t)]^{-\beta}$, which
tends to zero. However if $m=2$ and $\beta=0$ happen at the same time, the
example in Step 5 and $\left(  \ref{eq:log-lower-E}\right)  \left(
\ref{eq:log-lower-D}\right)  $ indicate that
\[
q=KR^{-2},\text{ }E(R)=2qR^{2}=2K,\text{ }D(R)=K^{1+1/\eta}R^{-(2+\eta)/\eta
}\rightarrow0.
\]
On the other hand, for each pair $\left(  \rho,\nu\right)  \in\mathcal{C}%
_{2,0}$ and each feasible plan $\sigma\in\Pi\left(  \rho,\nu\right)  $ we have
that
\[
\mathcal{E}_{\nu}(\sigma)\leqslant2\int|y|^{2}d\nu+2\int|T_{\nu}(x)|^{2}%
d\rho=4\int|y|^{2}d\nu\leqslant4K.
\]
The equality uses $(T_{\nu})_{\#}\rho=\nu$. Note that the above does not need
any objective gap restriction to hold. Consequently $2K\leqslant
\Omega_{\mathcal{C}_{2,0}}(t)\leqslant4K$ for small $t>0$. So there is no
vanishing modulus for the extreme case $m=2,$ $\beta=0$. Finally the exteme
case $m>2,$ $\beta=0$ corresponds to the pure power law of Theorem
\ref{thm:unbounded-power}.
\end{proof}

\section{From arbitrary plans to two Brenier maps}

\label{sec:two-map}

Throughout this section $\rho=fdx\in\mathcal{P}_{2}(\mathbb{R}^{d})$ and
$\mu,$ $\nu\in\mathcal{P}_{2}\left(  \mathbb{R}^{d}\right)  $. Denote the
Brenier maps from $\rho$ by $T_{\mu},$ $T_{\nu}$. The section intends to
convert the arbitrary plan error bound into a comparison of $T_{\mu},T_{\nu}$.
The conversion requires an auxiliary plan constructed through a
\textit{repairing} process.

\begin{definition}
[Plan repaired through a target coupling]\label{def:repaired-plan}Let
$\gamma\in\Pi\left(  \mu,\nu\right)  $. By disintegration, there is a
measurable probability kernel $y\mapsto k_{y}$ such that
\[
\gamma\left(  dy,dz\right)  =\mu\left(  dy\right)  k_{y}\left(  dz\right)  .
\]
Define a probability measure on triples by
\begin{equation}
\Lambda_{T_{\mu},\gamma}\left(  dx,dy,dz\right)  =\rho\left(  dx\right)
\delta_{T_{\mu}\left(  x\right)  }\left(  dy\right)  k_{y}\left(  dz\right)  ,
\label{eq:repaired-triple}%
\end{equation}
and define the $\gamma$-repaired plan by
\begin{equation}
\sigma_{T_{\mu},\gamma}=\left(  x,z\right)  _{\#}\Lambda_{T_{\mu},\gamma}.
\label{eq:repaired-plan}%
\end{equation}

\end{definition}

The $x$-marginal of $\Lambda_{T_{\mu},\gamma}$ is $\rho$, its $\left(
y,z\right)  $-marginal is $\gamma$, and therefore the $z$-marginal is $\nu$.
Hence $\sigma_{T_{\mu},\gamma}\in\Pi\left(  \rho,\nu\right)  $. Up to a
$\Lambda_{T_{\mu},\gamma}$-null set, $y=T_{\mu}\left(  x\right)  .$ So under
$\Lambda_{T_{\mu},\gamma}$,
\[
\left\vert T_{\mu}\left(  x\right)  -T_{\nu}\left(  x\right)  \right\vert
^{2}\leqslant2\left\vert y-z\right\vert ^{2}+2\left\vert z-T_{\nu}\left(
x\right)  \right\vert ^{2}.
\]
Integrating w.r.t. $\Lambda_{T_{\mu},\gamma}$, and using its $\left(
y,z\right)  $-marginal $\gamma$ and its $\left(  x,z\right)  $-marginal
$\sigma_{T_{\mu},\gamma}$, yields
\begin{equation}
\left\Vert T_{\mu}-T_{\nu}\right\Vert _{L^{2}(\rho)}^{2}\leqslant
2\int\left\vert y-z\right\vert ^{2}d\gamma+2\mathcal{E}_{\nu}\left(
\sigma_{T_{\mu},\gamma}\right)  . \label{eq:map-graph}%
\end{equation}
We have proved in Section \ref{sec:profile-optimization} that, under mild
conditions,\ $\mathcal{E}_{\nu}\left(  \sigma_{T_{\mu},\gamma}\right)  $ is
controlled by a modulus of $\mathcal{D}_{\nu}\left(  \sigma_{T_{\mu},\gamma
}\right)  .$ So in order to get the stability of Brenier map, it remains to
control $\mathcal{D}_{\nu}\left(  \sigma_{T_{\mu},\gamma}\right)  .$

\subsection{Maximal correlation and translation invariance}

For the source measure, recall that for $a\in\mathbb{R}^{d}$,%
\[
M_{\rho,r}\left(  a\right)  =\left(  \int_{\mathbb{R}^{d}}\left\vert
x-a\right\vert ^{r}f(x)dx\right)  ^{1/r},\text{ }M_{\rho,r}=\inf
_{a\in\mathbb{R}^{d}}M_{\rho,r}(a).
\]
Define for $a\in\mathbb{R}^{d},$
\[
F_{a}\left(  \eta\right)  =\sup_{\pi\in\Pi\left(  \rho,\eta\right)  }%
\int\left(  x-a\right)  \cdot yd\pi\left(  x,y\right)  .
\]
For $\pi\in\Pi\left(  \rho,\eta\right)  $,
\[
\frac{1}{2}\int\left\vert x-y\right\vert ^{2}d\pi=\frac{1}{2}\int\left\vert
x\right\vert ^{2}d\rho+\frac{1}{2}\int\left\vert y\right\vert ^{2}d\eta-\int
x\cdot yd\pi.
\]
The first two terms are fixed by the marginals, so minimizing the quadratic
cost is equivalent to maximizing the correlation. Subtracting $a\int yd\eta$,
which is also fixed by the marginals, does not change the maximizers.
Therefore, if $\pi_{\nu}\in\Pi\left(  \rho,\nu\right)  $ is optimal for the
quadratic cost, then
\[
F_{a}\left(  \nu\right)  =\int\left(  x-a\right)  \cdot zd\pi_{\nu}\left(
x,z\right)  .
\]
Substituting $\sigma_{T_{\mu},\gamma}$ constructed in $\left(
\ref{eq:repaired-plan}\right)  $ and $\pi_{\nu}$ into the definition $\left(
\ref{eq:Dintro}\right)  $ we have%
\begin{align*}
\mathcal{D}_{\nu}\left(  \sigma_{T_{\mu},\gamma}\right)   &  =\frac{1}{2}%
\int\left\vert x-z\right\vert ^{2}d\sigma_{T_{\mu},\gamma}\left(  x,z\right)
-\frac{1}{2}W_{2}^{2}\left(  \rho,\nu\right) \\
&  =\frac{1}{2}\int\left\vert x-z\right\vert ^{2}d\sigma_{T_{\mu},\gamma
}\left(  x,z\right)  -\frac{1}{2}\int\left\vert x-z\right\vert ^{2}d\pi_{\nu
}\left(  x,z\right) \\
&  =\int x\cdot zd\pi_{\nu}\left(  x,z\right)  -\int x\cdot zd\sigma_{T_{\mu
},\gamma}\left(  x,z\right)  .
\end{align*}
Hence
\begin{equation}
\mathcal{D}_{\nu}\left(  \sigma_{T_{\mu},\gamma}\right)  =F_{a}\left(
\nu\right)  -\int\left(  x-a\right)  \cdot z\,d\sigma_{T_{\mu},\gamma}\left(
x,z\right)  . \label{eq:D-F}%
\end{equation}
This expression is independent of $a$. Indeed, let $a^{\prime}\in
\mathbb{R}^{d}$. Since each coupling in $\Pi\left(  \rho,\nu\right)  $ has
fixed $y$-marginal $\nu$,
\[
F_{a^{\prime}}\left(  \nu\right)  =F_{a}\left(  \nu\right)  -\left(
a^{\prime}-a\right)  \int zd\nu\left(  z\right)  .
\]
Since the $z$-marginal of $\sigma_{T_{\mu},\gamma}$ is $\nu,$ so
\[
\int\left(  x-a^{\prime}\right)  \cdot zd\sigma_{T_{\mu},\gamma}=\int\left(
x-a\right)  \cdot zd\sigma_{T_{\mu},\gamma}-\left(  a^{\prime}-a\right)  \int
zd\nu\left(  z\right)  .
\]
Subtracting it from the previous equation, we see that $\left(  \ref{eq:D-F}%
\right)  $ is independent of $a$. Thus $a$ affects only the quality of the
moment or tail estimate, not the objective gap itself.

\begin{lemma}
\label{lem:ctrl-by-corr}For $\gamma\in\Pi\left(  \mu,\nu\right)  $, define
\begin{equation}
\mathcal{C}_{\rho,a}\left(  \gamma\right)  =\sup_{\Lambda_{\gamma}}%
\int\left\vert x-a\right\vert \left\vert z-y\right\vert d\Lambda_{\gamma
}\left(  x,y,z\right)  , \label{eq:D-product}%
\end{equation}
where the supremum is over all probabilistic triple law $\Lambda_{\gamma}$
whose $x$-marginal is $\rho$ and whose $\left(  y,z\right)  $-marginal is
$\gamma$. Under the construction in Definition~\ref{def:repaired-plan},
\begin{equation}
\mathcal{D}_{\nu}\left(  \sigma_{T_{\mu},\gamma}\right)  \leqslant
2\mathcal{C}_{\rho,a}\left(  \gamma\right)  . \label{eq:D-C}%
\end{equation}

\end{lemma}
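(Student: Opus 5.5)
Start from the identity (\ref{eq:D-F}), which holds for any center $a$:
\[
\mathcal{D}_{\nu}(\sigma_{T_\mu,\gamma})=F_a(\nu)-\int(x-a)\cdot z\,d\sigma_{T_\mu,\gamma}(x,z).
\]
We bound the two terms separately, each time comparing with the correlation $\int(x-a)\cdot y\,d\Lambda$ on a suitable triple law.

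\emph{Second term.} Under $\Lambda_{T_\mu,\gamma}$ from (\ref{eq:repaired-triple}) we have $y=T_\mu(x)$ almost surely, so
\[
\int(x-a)\cdot z\,d\sigma_{T_\mu,\gamma}=\int(x-a)\cdot y\,d\Lambda_{T_\mu,\gamma}+\int(x-a)\cdot(z-y)\,d\Lambda_{T_\mu,\gamma}.
\]
The first integral equals $\int(x-a)\cdot T_\mu(x)\,d\rho=F_a(\mu)$, since $(\mathrm{Id},T_\mu)_\#\rho$ is optimal and hence maximizes the centered correlation. The last integral is bounded below by $-\int|x-a||z-y|\,d\Lambda_{T_\mu,\gamma}$. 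Because $\Lambda_{T_\mu,\gamma}$ has $x$-marginal $\rho$ and $(y,z)$-marginal $\gamma$, it is admissible in (\ref{eq:D-product}). Therefore
\[
-\int(x-a)\cdot z\,d\sigma_{T_\mu,\gamma}\leqslant -F_a(\mu)+\mathcal{C}_{\rho,a}(\gamma).
\]

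\emph{First term.} The symmetric repair shows $F_a(\nu)\leqslant F_a(\mu)+\mathcal{C}_{\rho,a}(\gamma)$. Disintegrate $\gamma(dy,dz)=\nu(dz)\tilde k_z(dy)$ and set
\[
\tilde\Lambda(dx,dy,dz)=\rho(dx)\,\delta_{T_\nu(x)}(dz)\,\tilde k_z(dy).
\]
Its $x$-marginal is $\rho$. Its $(y,z)$-marginal is $\gamma$, since the $z$-marginal of $\rho(dx)\delta_{T_\nu(x)}(dz)$ is $\nu$. Its $(x,y)$-marginal lies in $\Pi(\rho,\mu)$. Then
\[
F_a(\nu)=\int(x-a)\cdot z\,d\tilde\Lambda=\int(x-a)\cdot y\,d\tilde\Lambda+\int(x-a)\cdot(z-y)\,d\tilde\Lambda\leqslant F_a(\mu)+\mathcal{C}_{\rho,a}(\gamma).
\]
Here the first integral is at most $F_a(\mu)$ by definition of the supremum, and the second is at most $\mathcal{C}_{\rho,a}(\gamma)$ by Cauchy--Schwarz pointwise and the admissibility of $\tilde\Lambda$.

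Adding the two estimates, $F_a(\mu)$ cancels and we obtain (\ref{eq:D-C}).

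\emph{Technical checks.} The main obstacle is integrability, needed to split integrals and justify each equality. Since $\rho,\mu,\nu\in\mathcal P_2$, we have $|x-a||y|\leqslant\frac12(|x-a|^2+|y|^2)$, which is integrable under every law with these marginals, and similarly for $z$. So all correlations are finite. If $\mathcal{C}_{\rho,a}(\gamma)=\infty$, the bound is trivial. The remaining facts are measurability of the kernels and the fact that the optimal plan attains $F_a$, both of which follow from the discussion preceding (\ref{eq:D-F}).
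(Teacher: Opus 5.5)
Your proof is correct and follows the paper's argument essentially verbatim: you bound $F_a(\mu)-\int(x-a)\cdot z\,d\sigma_{T_\mu,\gamma}$ using the repaired law $\Lambda_{T_\mu,\gamma}$, then bound $F_a(\nu)-F_a(\mu)$ by disintegrating $\gamma$ along $z$ and gluing it to the optimal plan $(\mathrm{Id},T_\nu)_\#\rho$, and finally add the two estimates. Your integrability check is a welcome addition; in fact $|x-a||z-y|\leqslant\frac12|x-a|^2+\frac12|z-y|^2$ shows that $\mathcal{C}_{\rho,a}(\gamma)$ is always finite, so the case $\mathcal{C}_{\rho,a}(\gamma)=\infty$ never occurs.
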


\begin{proof}
Let $\Lambda_{T_{\mu},\gamma}$ be the triple law in $\left(
\ref{eq:repaired-triple}\right)  $. Its $\left(  x,y\right)  $-marginal is the
optimal map from $\rho$ to $\mu$, hence
\[
\int(x-a)\cdot yd\Lambda_{T_{\mu},\gamma}=F_{a}(\mu).
\]
Its $\left(  x,z\right)  $-marginal is $\sigma_{T_{\mu},\gamma}$.
Consequently,
\[
F_{a}\left(  \mu\right)  -\int\left(  x-a\right)  \cdot zd\sigma_{T_{\mu
},\gamma}=\int\left(  x-a\right)  \cdot\left(  y-z\right)  d\Lambda_{T_{\mu
},\gamma}\leqslant\int\left\vert x-a\right\vert \left\vert z-y\right\vert
d\Lambda_{T_{\mu},\gamma}\leqslant\mathcal{C}_{\rho,a}\left(  \gamma\right)
.
\]
For the converse comparison, let $\pi_{\nu}\in\Pi\left(  \rho,\nu\right)  $
attain $F_{a}(\nu)$. We disintegrate $\gamma$ with respect to its
$z$-marginal,
\[
\gamma\left(  dy,dz\right)  =\nu\left(  dz\right)  \ell_{z}(dy),
\]
and then set
\[
\bar{\Lambda}_{\gamma}\left(  dx,dy,dz\right)  =\pi_{\nu}\left(  dx,dz\right)
\ell_{z}\left(  dy\right)  .
\]
This law has $x$-marginal $\rho$ and $\left(  y,z\right)  $-marginal $\gamma$,
so it is admissible in $\left(  \ref{eq:D-product}\right)  $. Since the
$\left(  x,y\right)  $-marginal of $\bar{\Lambda}_{\gamma}$ is a coupling of
$\rho$ and $\mu$,
\[
\int\left(  x-a\right)  \cdot yd\bar{\Lambda}_{\gamma}\leqslant F_{a}\left(
\mu\right)  .
\]
Using $\left(  \ref{eq:D-F}\right)  $ and that $\bar{\Lambda}_{\gamma}$ has
$\left(  x,z\right)  $-marginal $\pi_{\nu}$, we obtain
\[
F_{a}\left(  \nu\right)  -F_{a}\left(  \mu\right)  \leqslant\int\left(
x-a\right)  \cdot\left(  z-y\right)  d\bar{\Lambda}_{\gamma}\leqslant
\int\left\vert x-a\right\vert \left\vert z-y\right\vert d\bar{\Lambda}%
_{\gamma}\leqslant\mathcal{C}_{\rho,a}\left(  \gamma\right)  .
\]
Adding the previous inequalities we get $\left(  \ref{eq:D-C}\right)  $. Note
that the supremum in $\left(  \ref{eq:D-product}\right)  $ is essential
because $\Lambda_{\gamma}$ and $\bar{\Lambda}_{\gamma}$ need not coincide.
\end{proof}

\subsection{Objective gap of repaired plan}

This section provides control of the objective gap $\mathcal{D}_{\nu}\left(
\sigma_{T_{\mu},\gamma}\right)  $ by the distance between targets under
various source and target assumptions.

\begin{proposition}
[$W_{r}$-repair]\label{prop:Wr-ctrl}Let $1<r\leqslant2$, $r^{\prime}=r/\left(
r-1\right)  $ and $M_{\rho,r^{\prime}}<\infty$. If $\gamma$ is $W_{r}%
$-optimal, then
\begin{equation}
\mathcal{D}_{\nu}\left(  \sigma_{T_{\mu},\gamma}\right)  \leqslant
2M_{\rho,r^{\prime}}W_{r}\left(  \mu,\nu\right)  . \label{eq:Wr-ctrl}%
\end{equation}

\end{proposition}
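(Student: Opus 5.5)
The plan is to combine Lemma~\ref{lem:ctrl-by-corr} with H\"older's inequality. By (\ref{eq:D-C}) we have $\mathcal{D}_{\nu}(\sigma_{T_{\mu},\gamma})\leqslant 2\mathcal{C}_{\rho,a}(\gamma)$ for every center $a\in\mathbb{R}^{d}$. Recall that the objective gap itself does not depend on $a$, by the translation invariance discussion following (\ref{eq:D-F}). So it suffices to bound $\mathcal{C}_{\rho,a}(\gamma)$ uniformly over admissible triple laws and then optimize in $a$.

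Fix an admissible triple law $\Lambda_{\gamma}$, i.e.\ one with $x$-marginal $\rho$ and $(y,z)$-marginal $\gamma$. Apply H\"older's inequality with the conjugate exponents $r^{\prime}$ and $r$:
\[
\int|x-a|\,|z-y|\,d\Lambda_{\gamma}\leqslant\Big(\int|x-a|^{r^{\prime}}d\Lambda_{\gamma}\Big)^{1/r^{\prime}}\Big(\int|z-y|^{r}d\Lambda_{\gamma}\Big)^{1/r}.
\]
The first factor depends only on the $x$-marginal, so it equals $M_{\rho,r^{\prime}}(a)$. The second depends only on the $(y,z)$-marginal $\gamma$. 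Since $\gamma$ is $W_{r}$-optimal, that factor equals $W_{r}(\mu,\nu)$. The resulting bound does not depend on $\Lambda_{\gamma}$, so taking the supremum gives $\mathcal{C}_{\rho,a}(\gamma)\leqslant M_{\rho,r^{\prime}}(a)W_{r}(\mu,\nu)$. Hence $\mathcal{D}_{\nu}(\sigma_{T_{\mu},\gamma})\leqslant 2M_{\rho,r^{\prime}}(a)W_{r}(\mu,\nu)$ for every $a$.

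Taking the infimum over $a$ then yields (\ref{eq:Wr-ctrl}). The infimum is attained, by the same continuity and coercivity argument given for $M_{\nu,m}$ in Section~\ref{sec:target-profile}, though only the infimum is actually needed.

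Some routine points need checking. First, $r^{\prime}\geqslant2$ because $r\leqslant2$, so $M_{\rho,r^{\prime}}<\infty$ implies $\rho\in\mathcal{P}_{2}$, consistent with the standing setting. Second, $W_{r}(\mu,\nu)\leqslant W_{2}(\mu,\nu)<\infty$ for $\mu,\nu\in\mathcal{P}_{2}$, so a $W_{r}$-optimal $\gamma$ exists and has finite cost. Third, the integrands in Lemma~\ref{lem:ctrl-by-corr} are integrable, so the manipulations there are legitimate.

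I expect no real obstacle here. The only subtle point is that $\mathcal{C}_{\rho,a}$ is a supremum over triple laws, and H\"older's inequality handles this because both factors depend only on fixed marginals.
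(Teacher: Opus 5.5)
Your proposal is correct and matches the paper's proof: Hölder with exponents $r^{\prime}$ and $r$ on each admissible triple law identifies the two factors as $M_{\rho,r^{\prime}}(a)$ and $W_{r}(\mu,\nu)$. This bounds $\mathcal{C}_{\rho,a}(\gamma)$, after which you invoke the bound from Lemma~\ref{lem:ctrl-by-corr} and take the infimum over $a$. The integrability and existence checks you add are harmless details that the paper leaves implicit.
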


\begin{proof}
Applying H\"{o}lder's inequality to each law admitted in $\left(
\ref{eq:D-product}\right)  $, we have
\[
\int\left\vert x-a\right\vert \left\vert z-y\right\vert d\Lambda_{\gamma
}\leqslant M_{\rho,r^{\prime}}\left(  a\right)  \left(  \int\left\vert
z-y\right\vert ^{r}d\gamma\right)  ^{1/r}.
\]
The last factor equals $W_{r}\left(  \mu,\nu\right)  $. Thus,%
\[
\mathcal{C}_{\rho,a}\left(  \gamma\right)  \leqslant M_{\rho,r^{\prime}%
}\left(  a\right)  W_{r}\left(  \mu,\nu\right)
\]
for all $a\in\mathbb{R}^{d}$. Using $\left(  \ref{eq:D-C}\right)  $ and taking
the infimum over $a$ we obtain $\left(  \ref{eq:Wr-ctrl}\right)  $.
\end{proof}

\begin{proposition}
[$W_{1}$-repair-moment-moment]\label{prop:W1-moments}Let $n\geqslant2$, $m>2$
and define
\begin{equation}
M_{\mu,\nu,m}=\inf_{b\in\mathbb{R}^{d}}\max\left\{  M_{\mu,m}\left(  b\right)
,M_{\nu,m}\left(  b\right)  \right\}  . \label{eq:M-mu-nu-pair}%
\end{equation}
Assume $M_{\rho,n}<\infty$ and $M_{\mu,\nu,m}<\infty$. Let
\[
q=\frac{n}{n-1},\text{ }\alpha=\frac{m-q}{q\left(  m-1\right)  }%
=\frac{m\left(  n-1\right)  -n}{n\left(  m-1\right)  }.
\]
If $\gamma$ is optimal for $W_{1}\left(  \mu,\nu\right)  $, then with
$\delta=W_{1}\left(  \mu,\nu\right)  ,$%
\begin{equation}
\int\left\vert y-z\right\vert ^{2}d\gamma\leqslant\left(  2M_{\mu,\nu
,m}\right)  ^{m/\left(  m-1\right)  }\delta^{\left(  m-2\right)  /\left(
m-1\right)  }, \label{eq:W1-direct}%
\end{equation}%
\begin{equation}
\mathcal{D}_{\nu}\left(  \sigma_{T_{\mu},\gamma}\right)  \leqslant\left(
2M_{\rho,n}\right)  \left(  2M_{\mu,\nu,m}\right)  ^{1-\alpha}\delta^{\alpha}.
\label{eq:W1-D}%
\end{equation}

\end{proposition}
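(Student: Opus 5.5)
Both estimates come from interpolating between the $W_1$ cost of $\gamma$ and the $m$-th moments of its marginals. Throughout, fix a center $b$ that is almost optimal in $\left(\ref{eq:M-mu-nu-pair}\right)$, write $M=\max\{M_{\mu,m}(b),M_{\nu,m}(b)\}$, and let $w=|y-z|$ under $\gamma$. The triangle inequality in $L^{m}(\gamma)$ gives
\[
\Big(\int w^{m}d\gamma\Big)^{1/m}\leqslant M_{\mu,m}(b)+M_{\nu,m}(b)\leqslant 2M,
\]
while optimality of $\gamma$ gives $\int w\,d\gamma=\delta$.

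\textbf{Proof of $\left(\ref{eq:W1-direct}\right)$.} This is log-convexity of $L^r$ norms, i.e. Hölder's inequality. Write $2=\theta\cdot1+(1-\theta)m$ with $\theta=(m-2)/(m-1)$ and $1-\theta=1/(m-1)$. Then
\[
\int w^{2}d\gamma\leqslant\Big(\int w\,d\gamma\Big)^{\theta}\Big(\int w^{m}d\gamma\Big)^{1-\theta}\leqslant\delta^{(m-2)/(m-1)}(2M)^{m/(m-1)}.
\]
Letting $b$ approach the infimum gives $\left(\ref{eq:W1-direct}\right)$.

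\textbf{Proof of $\left(\ref{eq:W1-D}\right)$.} By Lemma~\ref{lem:ctrl-by-corr} it suffices to show
\[
\mathcal{C}_{\rho,a}(\gamma)\leqslant M_{\rho,n}(a)\,(2M)^{1-\alpha}\delta^{\alpha}
\]
for every $a$, and then take the infimum over $a$. Fix any admissible triple law $\Lambda_\gamma$.

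1. Apply Hölder's inequality with exponents $n$ and $q=n/(n-1)$. This bounds $\int|x-a|\,w\,d\Lambda_\gamma$ by $M_{\rho,n}(a)\,\|w\|_{L^{q}(\gamma)}$, since the $x$-marginal is $\rho$ and the $(y,z)$-marginal is $\gamma$.

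2. Interpolate $q$ between $1$ and $m$. This requires $1<q\leqslant m$, which holds because $n\geqslant2$ gives $q\leqslant2<m$. Write $1/q=\alpha/1+(1-\alpha)/m$. Solving gives
\[
\alpha=\frac{1/q-1/m}{1-1/m}=\frac{m-q}{q(m-1)},
\]
which is exactly the $\alpha$ in the statement.

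3. The $L^p$ interpolation inequality then gives $\|w\|_{L^{q}(\gamma)}\leqslant\|w\|_{1}^{\alpha}\|w\|_{m}^{1-\alpha}\leqslant\delta^{\alpha}(2M)^{1-\alpha}$.

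4. Take the supremum over $\Lambda_\gamma$ to get the bound on $\mathcal{C}_{\rho,a}(\gamma)$. Then $\left(\ref{eq:D-C}\right)$ doubles it, giving $2M_{\rho,n}(a)(2M)^{1-\alpha}\delta^{\alpha}$.

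5. Take infima over $a$ and $b$ to obtain $\left(\ref{eq:W1-D}\right)$.

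\textbf{Main obstacle.} The only delicate points are bookkeeping: confirming that the interpolation exponents match the stated $\alpha$, and checking that the factor $2$ from Lemma~\ref{lem:ctrl-by-corr} combines correctly with the factor $2$ in $2M$. Degenerate cases such as $\delta=0$ or $M=0$ need no separate treatment, since the inequalities remain valid with both sides zero.
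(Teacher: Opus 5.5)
Your proof is correct and is essentially the paper's argument. Both use the triangle inequality in $L^{m}(\gamma)$, H\"older interpolation between $L^{1}(\gamma)$ and $L^{m}(\gamma)$ for the $L^{2}$ and $L^{q}$ norms, H\"older with exponents $n,q$ on each admissible triple law, then the factor $2$ from the correlation lemma and an infimum over $a$. The only cosmetic difference is that you take an almost-optimal center $b$ and pass to the limit, whereas the paper uses an exact minimizer $b_{\ast}$ of $\max\{M_{\mu,m}(b),M_{\nu,m}(b)\}$.
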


\begin{proof}
Since $n\geqslant2$, $1<q\leqslant2<m$,%
\[
q\left(  m-1\right)  -\left(  m-q\right)  =m\left(  q-1\right)  >0.
\]
So $\alpha\in\left(  0,1\right)  $. Since%
\[
m\alpha=\frac{m\left(  m-q\right)  }{q\left(  m-1\right)  },\text{ }%
1-\alpha=\frac{m\left(  q-1\right)  }{q\left(  m-1\right)  },
\]
hence
\[
\alpha+\frac{1-\alpha}{m}=\frac{1}{q}.
\]
Let $b_{\ast}$ minimize the definition of $M_{\mu,\nu,m}$ (see the discussion
at the beginning of Section \ref{sec:target-profile}). By the definition of
$M_{\mu,\nu,m}$ we have
\begin{equation}
\left\Vert z-y\right\Vert _{L^{m}\left(  \gamma\right)  }\leqslant\left\Vert
z-b_{\ast}\right\Vert _{L^{m}\left(  \nu\right)  }+\left\Vert y-b_{\ast
}\right\Vert _{L^{m}\left(  \mu\right)  }\leqslant2M_{\mu,\nu,m},
\label{eq:repair-nm-Lm}%
\end{equation}
Let $r_{1}=1/\left(  \alpha q\right)  $ and $r_{2}=m/\left(  \left(
1-\alpha\right)  q\right)  $. Both exponents exceed one and $1/r_{1}%
+1/r_{2}=\alpha q+\left(  1-\alpha\right)  q/m=1$. Hence by H\"{o}lder
interpolation we obtain
\[
\int\left\vert z-y\right\vert ^{q}d\gamma=\int\left\vert z-y\right\vert
^{\alpha q}\left\vert z-y\right\vert ^{\left(  1-\alpha\right)  q}%
d\gamma\leqslant\left(  \int\left\vert z-y\right\vert d\gamma\right)  ^{\alpha
q}\left(  \int^{m}\left\vert z-y\right\vert ^{m}d\gamma\right)  ^{\left(
1-\alpha\right)  q/m}.
\]
Taking the $q$-th root and using $\int\left\vert z-y\right\vert d\gamma
=\delta$ we get
\[
\left\Vert z-y\right\Vert _{L^{q}\left(  \gamma\right)  }\leqslant
\delta^{\alpha}\left(  2M_{\mu,\nu,m}\right)  ^{1-\alpha}.
\]
For each admissible $\Lambda_{\gamma}$ in $\left(  \ref{eq:D-product}\right)
$, H\"{o}lder's inequality with exponents $n,$ $q$ therefore gives
\[
\int\left\vert x-a\right\vert \left\vert z-y\right\vert d\Lambda_{\gamma
}\leqslant M_{\rho,n}\left(  a\right)  \delta^{\alpha}\left(  2M_{\mu,\nu
,m}\right)  ^{1-\alpha}.
\]
Taking the supremum over $\Lambda_{\gamma}$, using $\left(  \ref{eq:D-C}%
\right)  $, and then taking the infimum over $a$, we obtain $\left(
\ref{eq:W1-D}\right)  $. For the direct coupling estimate $\left(
\ref{eq:W1-direct}\right)  $, H\"{o}lder interpolation with the relation
\[
\frac{1}{2}=\frac{m-2}{2\left(  m-1\right)  }+\frac{m}{2\left(  m-1\right)
}\frac{1}{m}%
\]
gives
\[
\Vert z-y\Vert_{L^{2}\left(  \gamma\right)  }^{2}=\int\left\vert
y-z\right\vert ^{2}d\gamma\leqslant\Vert z-y\Vert_{L^{1}\left(  \gamma\right)
}^{(m-2)/(m-1)}\Vert z-y\Vert_{L^{m}\left(  \gamma\right)  }^{m/(m-1)}.
\]
Substituting $\left(  \ref{eq:repair-nm-Lm}\right)  $ into the RHS yields
$\left(  \ref{eq:W1-direct}\right)  $.
\end{proof}

\begin{proposition}
[$W_{1}$-repair-moment-compact]\label{prop:W1-nth-compact}Let $n\geqslant2$.
Assume that $M_{\rho,n}<\infty$, the targets $\mu,\nu$ are supported in a
common ball $B\left(  b,R_{Y}\right)  ,$ $R_{Y}>0.$ Let
\[
q=\frac{n}{n-1}.
\]
If $\gamma$ is optimal for $W_{1}\left(  \mu,\nu\right)  $, then with
$\delta=W_{1}\left(  \mu,\nu\right)  ,$%
\[
\int\left\vert y-z\right\vert ^{2}d\gamma\leqslant2R_{Y}\delta,
\]
and%
\begin{equation}
\mathcal{D}_{\nu}\left(  \sigma_{T_{\mu},\gamma}\right)  \leqslant\left(
2M_{\rho,n}\right)  \left(  2R_{Y}\right)  ^{1/n}\delta^{1-1/n}.
\label{eq:D-nth-compact}%
\end{equation}

\end{proposition}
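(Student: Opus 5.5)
The argument mirrors Proposition~\ref{prop:W1-moments}, with the $L^{m}$ bound on $|z-y|$ replaced by an $L^{\infty}$ bound from the common support. Since $\mu,\nu$ are both supported in $B(b,R_{Y})$, every pair $(y,z)$ in the support of the $W_{1}$-optimal $\gamma$ satisfies
\[
|y-z|\leqslant|y-b|+|z-b|\leqslant2R_{Y}.
\]
The direct estimate then follows by splitting $|y-z|^{2}=|y-z|\cdot|y-z|$. We bound one factor by $2R_{Y}$ and integrate the other against $\gamma$, which gives
\[
\int|y-z|^{2}d\gamma\leqslant2R_{Y}\int|y-z|\,d\gamma=2R_{Y}\delta.
\]

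For the objective gap, we first invoke Lemma~\ref{lem:ctrl-by-corr}, which gives $\mathcal{D}_{\nu}(\sigma_{T_{\mu},\gamma})\leqslant2\mathcal{C}_{\rho,a}(\gamma)$ for every $a\in\mathbb{R}^{d}$. We then bound each admissible triple law $\Lambda_{\gamma}$ separately, using that its $x$-marginal is $\rho$ and its $(y,z)$-marginal is $\gamma$. Applying H\"{o}lder's inequality with exponents $n$ and $q=n/(n-1)$ gives
\[
\int|x-a||z-y|\,d\Lambda_{\gamma}\leqslant M_{\rho,n}(a)\,\Vert z-y\Vert_{L^{q}(\gamma)}.
\]
Next we interpolate the $L^{q}$ norm between $L^{1}$ and $L^{\infty}$, using $|z-y|^{q}\leqslant(2R_{Y})^{q-1}|z-y|$ on the support of $\gamma$. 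This yields
\[
\Vert z-y\Vert_{L^{q}(\gamma)}\leqslant(2R_{Y})^{(q-1)/q}\delta^{1/q}=(2R_{Y})^{1/n}\delta^{1-1/n},
\]
since $(q-1)/q=1/n$ and $1/q=1-1/n$.

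Combining these estimates, we take the supremum over $\Lambda_{\gamma}$, multiply by $2$ as Lemma~\ref{lem:ctrl-by-corr} requires, and then take the infimum over $a$. This turns $M_{\rho,n}(a)$ into $M_{\rho,n}$ and gives \eqref{eq:D-nth-compact} in the form $(2M_{\rho,n})(2R_{Y})^{1/n}\delta^{1-1/n}$.

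There is no real obstacle here. The only step needing care is the exponent bookkeeping in the $L^{1}$–$L^{\infty}$ interpolation, together with checking that the bound $|y-z|\leqslant2R_{Y}$ holds $\gamma$-a.e.; the latter is immediate because $\gamma$ is concentrated on $\operatorname{supp}\mu\times\operatorname{supp}\nu$.
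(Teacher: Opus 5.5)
Your proposal is correct and follows essentially the same route as the paper's proof. Both bound $|y-z|\leqslant 2R_Y$ on the support of $\gamma$ and use the reduction $\mathcal{D}_\nu\leqslant 2\mathcal{C}_{\rho,a}(\gamma)$ from Lemma~\ref{lem:ctrl-by-corr}. Both then apply H\"older with exponents $n,q$, the interpolation $\int|z-y|^q\,d\gamma\leqslant(2R_Y)^{q-1}\delta$, the supremum over triple laws, and finally the infimum over $a$.
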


\begin{proof}
Since $\left\vert y-z\right\vert ^{2}\leqslant2R_{Y}\left\vert y-z\right\vert
$, integrating it against $\gamma$ yields,%
\[
\int\left\vert y-z\right\vert ^{2}d\gamma\leqslant2R_{Y}W_{1}\left(  \mu
,\nu\right)  =2R_{Y}\delta.
\]
To prove the bound for $\mathcal{D}_{\nu}\left(  \sigma_{T_{\mu},\gamma
}\right)  $, it is sufficient to bound $\mathcal{C}_{\rho,a}\left(
\gamma\right)  $. Fix an admissible $\Lambda_{\gamma}$ in $\left(
\ref{eq:D-product}\right)  $. By H\"{o}lder's inequality with exponents $n,$
$q,$ we therefore get%
\[
\int\left\vert x-a\right\vert \left\vert z-y\right\vert d\Lambda_{\gamma
}\leqslant M_{\rho,n}\left(  a\right)  \left\Vert z-y\right\Vert
_{L^{q}\left(  \gamma\right)  }.
\]
Since $q>1,$ we can write%
\[
\int\left\vert z-y\right\vert ^{q}d\gamma=\int\left\vert z-y\right\vert
\left\vert z-y\right\vert ^{q-1}d\gamma\leqslant\left(  2R_{Y}\right)
^{q-1}\int\left\vert z-y\right\vert d\gamma=\left(  2R_{Y}\right)
^{q-1}\delta.
\]
Taking $q$-th root yields%
\[
\left\Vert z-y\right\Vert _{L^{q}\left(  \gamma\right)  }\leqslant\left(
2R_{Y}\right)  ^{1-1/q}\delta^{1/q}=\left(  2R_{Y}\right)  ^{1/n}%
\delta^{1-1/n}.
\]
Therefore
\[
\int\left\vert x-a\right\vert \left\vert z-y\right\vert d\Lambda_{\gamma
}\leqslant\left(  2R_{Y}\right)  ^{1/n}M_{\rho,n}\left(  a\right)
\delta^{1-1/n}.
\]
Taking the supremum over $\Lambda_{\gamma}$, using $\left(  \ref{eq:D-C}%
\right)  $ and then taking the infimum over $a$, we obtain the bound for
$\mathcal{D}_{\nu}\left(  \sigma_{T_{\mu},\gamma}\right)  $.
\end{proof}

As with Proposition \ref{prop:annuli}, for fast decaying source its tail
information may be exploited to obtain tighter estimate of the repaired
objective gap $\mathcal{D}_{\nu}\left(  \sigma_{T_{\mu},\gamma}\right)  $ than
simply using its moment. Recall that,
\[
\tau_{1,\rho,a}\left(  S\right)  =\int_{\left\{  \left\vert x-a\right\vert
>S\right\}  }\left\vert x-a\right\vert \rho\left(  x\right)  .
\]

\begin{proposition}
\label{prop:D-1-tail}Let $\mu,$ $\nu$ be probability measures supported in
$B\left(  b,R_{Y}\right)  ,$ $R_{Y}>0$. If $\gamma$ is optimal for
$W_{1}\left(  \mu,\nu\right)  $, then
\begin{equation}
\mathcal{D}_{\nu}\left(  \sigma_{T_{\mu},\gamma}\right)  \leqslant2\inf
_{a\in\mathbb{R}^{d},\text{ }S>0}\left\{  SW_{1}\left(  \mu,\nu\right)
+2R_{Y}\tau_{1,\rho,a}\left(  S\right)  \right\}  . \label{eq:D-1-tail}%
\end{equation}

\end{proposition}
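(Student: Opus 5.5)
By Lemma~\ref{lem:ctrl-by-corr} it suffices to show that, for every $a\in\mathbb{R}^{d}$ and $S>0$,
\[
\mathcal{C}_{\rho,a}(\gamma)\leqslant SW_{1}(\mu,\nu)+2R_{Y}\tau_{1,\rho,a}(S).
\]
The factor $2$ in (\ref{eq:D-1-tail}) then comes from (\ref{eq:D-C}), and we take the infimum over $a$ and $S$ at the end. Since $\mathcal{C}_{\rho,a}(\gamma)$ is a supremum over triple laws, I will fix one admissible law $\Lambda_{\gamma}$, with $x$-marginal $\rho$ and $(y,z)$-marginal $\gamma$, and bound
\[
\int|x-a||z-y|\,d\Lambda_{\gamma}
\]
by a quantity that does not depend on $\Lambda_{\gamma}$.

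I split the integral according to whether $|x-a|\leqslant S$ or $|x-a|>S$.

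\emph{Near part.} On $\{|x-a|\leqslant S\}$ the integrand is at most $S|z-y|$. Integrating over the whole space and using the $(y,z)$-marginal $\gamma$, together with the $W_{1}$-optimality of $\gamma$, bounds this part by
\[
S\int|z-y|\,d\gamma=SW_{1}(\mu,\nu).
\]

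\emph{Far part.} Since $\mu$ and $\nu$ are both supported in $B(b,R_{Y})$, $\gamma$ is concentrated on $B(b,R_{Y})^{2}$, so $|z-y|\leqslant2R_{Y}$ holds $\Lambda_{\gamma}$-a.e. On $\{|x-a|>S\}$ the integrand is therefore at most $2R_{Y}|x-a|$. Using the $x$-marginal $\rho$, this part is at most
\[
2R_{Y}\int_{\{|x-a|>S\}}|x-a|\,d\rho=2R_{Y}\tau_{1,\rho,a}(S).
\]

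Adding the two parts gives the claimed bound on $\mathcal{C}_{\rho,a}(\gamma)$. No step is a real obstacle; the only point needing care is that $|z-y|\leqslant2R_{Y}$ holds only almost everywhere, namely off a $\gamma$-null set. If $\tau_{1,\rho,a}(S)=\infty$ the inequality is trivial. Since $\rho\in\mathcal{P}_{2}$, the tail is finite in any case.
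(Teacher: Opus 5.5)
Your proposal is correct and follows the paper's proof essentially step for step. Both reduce via Lemma~\ref{lem:ctrl-by-corr} to bounding $\mathcal{C}_{\rho,a}(\gamma)$, fix an admissible triple law and split it at $|x-a|=S$. The near part is bounded by $SW_{1}(\mu,\nu)$ through the $(y,z)$-marginal $\gamma$, and the far part by $2R_{Y}\tau_{1,\rho,a}(S)$ using $|z-y|\leqslant 2R_{Y}$ and the $x$-marginal $\rho$; the infimum over $a,S$ is taken at the end.
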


\begin{proof}
Write $\delta=W_{1}\left(  \mu,\nu\right)  .$ Fix $a\in\mathbb{R}^{d}$ and
$S>0$. Fix a triple law $\Lambda_{\gamma}$ admissible in $\left(
\ref{eq:D-product}\right)  $. On the set $\left\vert x-a\right\vert \leqslant
S$, we have $\left\vert x-a\right\vert \left\vert z-y\right\vert \leqslant
S\left\vert z-y\right\vert .$ So%
\[
\int_{\left\{  \left\vert x-a\right\vert \leqslant S\right\}  }\left\vert
x-a\right\vert \left\vert z-y\right\vert d\Lambda_{\gamma}\leqslant S\delta.
\]
On the complement $\left\vert x-a\right\vert >S,$ we have $\left\vert
z-y\right\vert \leqslant2R_{Y}$. Hence%
\[
\int_{\left\{  \left\vert x-a\right\vert >S\right\}  }\left\vert
x-a\right\vert \left\vert z-y\right\vert d\Lambda_{\gamma}\leqslant2R_{Y}%
\int_{\left\{  \left\vert x-a\right\vert >S\right\}  }\left\vert
x-a\right\vert d\Lambda_{\gamma}=2R_{Y}\tau_{1,\rho,a}\left(  S\right)  .
\]
The bounds are independent of the admissible triple law $\Lambda_{\gamma}$. We
now add these inequalities, take the supremum over $\Lambda_{\gamma}$ and use
$\left(  \ref{eq:D-C}\right)  $, then $\left(  \ref{eq:D-1-tail}\right)  $
follows since the resulting inequality is valid for every $a$ and $S$.
\end{proof}

\begin{remark}
The $n$th-moment condition is stronger than the tail bound $\left(
\ref{eq:D-1-tail}\right)  .$ Suppose $M_{\rho,n}\left(  a_{\ast}\right)
=M_{\rho,n}<\infty$ for $n\geqslant2,$ then%
\[
\tau_{1,\rho,a_{\ast}}\left(  S\right)  =\int_{\left\{  \left\vert x-a_{\ast
}\right\vert >S\right\}  }\left\vert x-a_{\ast}\right\vert \rho\left(
x\right)  \leqslant S^{1-n}\int\left\vert x-a_{\ast}\right\vert ^{n}%
\rho\left(  x\right)  =M_{\rho,n}S^{1-n}.
\]
Subsitituting this into $\left(  \ref{eq:D-1-tail}\right)  $ yields $\left(
\ref{eq:D-nth-compact}\right)  .$
\end{remark}

Analogous $W_{1}$-repairs are obtainable under other source target
assumptions. But to avoid repetition, we state them without proof. For the
$W_{1}$-repair with source supported in $B\left(  0,R_{X}\right)  $ and target
with $m$th-moment, $m>2,$ the direct coupling term is the same as $\left(
\ref{eq:W1-direct}\right)  ,$ while the correlation term is bounded by
\begin{equation}
\int\left\vert x-a\right\vert \left\vert z-y\right\vert d\Lambda_{\gamma
}\leqslant R_{X}\delta. \label{eq:repair-compact-m}%
\end{equation}
For the $W_{1}$-repair with source supported in $B\left(  0,R_{X}\right)  $
and target supported in $B\left(  0,R_{Y}\right)  ,$ the direct coupling term
and the correlation term are bounded respectively by%

\[
\int\left\vert y-z\right\vert ^{2}d\gamma\leqslant2R_{Y}\delta\text{ and}%
\int\left\vert x-a\right\vert \left\vert z-y\right\vert d\Lambda_{\gamma
}\leqslant R_{X}\delta.
\]

\section{Quantitative stability of Brenier maps with common source}

We are now ready to prove the quantitative stabilities of Brenier maps.

\begin{theorem}
[Stability-compact-compact]\label{thm:compact-maps}If $\rho$ is suppored in
$B\left(  a,R_{X}\right)  $, $\rho=fdx$ with $f\in L^{p}$, $p>1$, $s=1-1/p$
and $\mu,$ $\nu$ are supported in $B\left(  b,R_{Y}\right)  $, then
\[
\left\Vert T_{\mu}-T_{\nu}\right\Vert _{L^{2}(\rho)}\leqslant CW_{1}\left(
\mu,\nu\right)  ^{s/\left[  2\left(  1+s\right)  \right]  },
\]
where the constant $C>0$ depends only on $d,s,R_{X},R_{Y}$, not on $a,b$ or on
the targets. For $p=\infty$, this recovers the $1/4$ exponent of \cite[Theorem
3.1]{merigot2026sharp}, which is sharp for the general class when
$d\geqslant2$ by \cite{letrouit2026unstable}. If instead $\rho$ satisfies the
small-set condition $\left(  \ref{eq:small-set-density}\right)  $ with
$0<s\leqslant1$, then the same stability holds although the constant now also
depends on $C_{V}$.
\end{theorem}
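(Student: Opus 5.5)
Apply the repaired-plan inequality \eqref{eq:map-graph} with $\gamma$ a $W_1$-optimal coupling of $\mu$ and $\nu$, and set $\delta=W_1(\mu,\nu)$. This gives
\[
\|T_\mu-T_\nu\|_{L^2(\rho)}^2\leqslant 2\int|y-z|^2d\gamma+2\mathcal{E}_\nu(\sigma_{T_\mu,\gamma}).
\]
Since $\mu,\nu$ are supported in $B(b,R_Y)$, we have $|y-z|\leqslant 2R_Y$ on $\operatorname{supp}\gamma$. Hence the first term is at most $4R_Y\delta$, exactly as in the proof of Proposition~\ref{prop:W1-nth-compact}.

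For the second term, note that $\sigma_{T_\mu,\gamma}\in\Pi(\rho,\nu)$ and the pair $(\rho,\nu)$ satisfies the hypotheses of Corollary~\ref{cor:compact-target}, since $\rho\ll dx$ gives Assumption~\ref{ass:brenier}. The corollary therefore yields
\[
\mathcal{E}_\nu(\sigma_{T_\mu,\gamma})\leqslant B_{d,\rho,p}R_X^{(d-1)s/(1+s)}R_Y^{(2+s)/(1+s)}\mathcal{D}_\nu(\sigma_{T_\mu,\gamma})^{s/(1+s)}.
\]
To control the objective gap I would use Lemma~\ref{lem:ctrl-by-corr} with the source center $a$. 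For any admissible triple law, $|x-a|\leqslant R_X$ $\rho$-a.e., so
\[
\int|x-a||z-y|\,d\Lambda_\gamma\leqslant R_X\int|z-y|\,d\gamma=R_X\delta.
\]
This is the compact–compact repair bound stated after Proposition~\ref{prop:D-1-tail}, and it gives $\mathcal{D}_\nu(\sigma_{T_\mu,\gamma})\leqslant 2R_X\delta$. Substituting,
\[
\|T_\mu-T_\nu\|_{L^2(\rho)}^2\leqslant 4R_Y\delta+C_1\delta^{s/(1+s)} .
\]

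The remaining step is to absorb the linear term. Since $\mu,\nu\subset B(b,R_Y)$, we have $\delta\leqslant 2R_Y$, and $s/(1+s)\leqslant 1/2<1$. Therefore
\[
\delta=\delta^{s/(1+s)}\delta^{1/(1+s)}\leqslant (2R_Y)^{1/(1+s)}\delta^{s/(1+s)}.
\]
Taking square roots then gives the exponent $s/[2(1+s)]$.

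For the constant: $B_{d,\rho,p}$ involves $\|f\|_{L^p}$. The statement's list "$d,s,R_X,R_Y$" must be read as also allowing $\|f\|_p$, or else one uses Hölder to bound it. Independence from $a$ and $b$ is clear because only radii enter. In the small-set case, replace $B_{d,\rho,p}$ by $B_s$ from \eqref{eq:small-set-Bs}, as permitted in Corollary~\ref{cor:compact-target}. The constant then depends on $C_V$.

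The main subtlety is the degenerate cases. If $\delta=0$ then $\mu=\nu$ and there is nothing to prove. Corollary~\ref{cor:compact-target} already handles $\mathcal{D}=0$ and $R_Y=0$. The $p=\infty$ remark follows from $s=1$, which gives the exponent $1/4$.
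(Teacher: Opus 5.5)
Your proposal is correct and follows essentially the same route as the paper: you repair along a $W_1$-optimal coupling, bound $\mathcal{D}_\nu(\sigma_{T_\mu,\gamma})\leqslant 2R_X\delta$, apply Corollary~\ref{cor:compact-target}, and insert the result into (\ref{eq:map-graph}), and your absorption of the linear term via $\delta\leqslant 2R_Y$ is a tidy substitute for the paper's split at $\delta=1$. You are also right that the constant genuinely depends on $\Vert f\Vert_{L^p}$ through $B_{d,\rho,p}$, but you should drop the suggestion that H\"{o}lder could remove this dependence, since for a probability density on a ball H\"{o}lder only bounds $\Vert f\Vert_{L^p}$ from below.
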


\begin{proof}
Let $\gamma$ be optimal for $W_{1}\left(  \mu,\nu\right)  $ and let
$\sigma_{T_{\mu},\gamma}$ be the repaired plan from
Definition~\ref{def:repaired-plan}. Since $\left\vert x-a\right\vert \leqslant
R_{X}$, $\left(  \ref{eq:D-C}\right)  $ gives%
\begin{equation}
\mathcal{D}_{\nu}(\sigma_{T_{\mu},\gamma})\leqslant2R_{X}W_{1}(\mu,\nu).
\label{thm:compact-mapsD}%
\end{equation}
Also
\[
\int\left\vert y-z\right\vert ^{2}d\gamma\leqslant2R_{Y}W_{1}\left(  \mu
,\nu\right)  .
\]
The error bound $\left(  \ref{eq:compact-target-E}\right)  $ therefore gives
\begin{equation}
\mathcal{E}_{\nu}\left(  \sigma_{T_{\mu},\gamma}\right)  \leqslant
\operatorname*{Const}\cdot\mathcal{D}_{\nu}(\sigma_{T_{\mu},\gamma
})^{s/\left(  1+s\right)  }\leqslant C_{1}W_{1}\left(  \mu,\nu\right)
^{s/\left(  1+s\right)  }. \label{thm:compact-mapsE}%
\end{equation}
where $C_{1}=2^{s/\left(  1+s\right)  }B_{d,\rho,p}R_{X}^{ds/\left(
1+s\right)  }R_{Y}^{(2+s)/(1+s)},$ $B_{d,\rho,p}$ is defined in $\left(
\ref{eq:source-const}\right)  .$ Substituting the above estimates into
$\left(  \ref{eq:map-graph}\right)  $ yields, with $\delta=W_{1}\left(
\mu,\nu\right)  $,
\[
\left\Vert T_{\mu}-T_{\nu}\right\Vert _{L^{2}\left(  \rho\right)  }%
^{2}\leqslant4R_{Y}\delta+2C_{1}\delta^{s/\left(  1+s\right)  }.
\]
For $0<\delta\leqslant1$, $s/\left(  1+s\right)  <1$, so $\delta
\leqslant\delta^{s/\left(  1+s\right)  }$. Hence%
\[
\left\Vert T_{\mu}-T_{\nu}\right\Vert _{L^{2}\left(  \rho\right)  }%
\leqslant\left(  4R_{Y}+2C_{1}\right)  ^{1/2}\delta^{s/\left[  2\left(
1+s\right)  \right]  }.
\]
The case $\delta=0$ follows from uniqueness, and for the range $\delta>1$,
use
\[
\left\Vert T_{\mu}-T_{\nu}\right\Vert _{L^{2}\left(  \rho\right)  }%
\leqslant2R_{Y}\leqslant2R_{Y}\delta^{s/\left[  2\left(  1+s\right)  \right]
}.
\]
Thus the constant valid for all $\delta\geqslant0$ is $\max\left\{  \left(
4R_{Y}+2C_{1}\right)  ^{1/2},2R_{Y}\right\}  .$
\end{proof}

\begin{theorem}
[Stability-moment-compact]Assume Assumption~\ref{ass:brenier}, let $\rho=fdx$,
$f\in L^{p}\left(  \mathbb{R}^{d}\right)  $, $1<p\leqslant\infty$, $s=1-1/p$,
$M_{\rho,n}(a)<\infty$ for some $a\in$ $\mathbb{R}^{d}$, $n\geqslant2$ and the
targets $\mu,$ $\nu$ are supported in $B\left(  b,R_{Y}\right)  $. Let%
\[
\eta=\frac{sn}{n+(d-1)s},\text{ }\lambda=\frac{\eta}{1+\eta}%
\]%
\[
\Vert T_{\mu}-T_{\nu}\Vert_{L^{2}(\rho)}\leqslant CW_{1}(\mu,\nu
)^{\lambda\left(  n-1\right)  /\left(  2n\right)  },
\]
where the constant $C>0$ depends only on $d,s,p,R_{Y}$, not on $a,b$ or the targets.
\end{theorem}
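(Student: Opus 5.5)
We follow the proof of Theorem~\ref{thm:compact-maps}, replacing the compact-source ingredients by their moment versions. Let $\gamma$ be optimal for $W_{1}(\mu,\nu)$, write $\delta=W_{1}(\mu,\nu)$, and let $\sigma_{T_{\mu},\gamma}$ be the repaired plan of Definition~\ref{def:repaired-plan}. By $(\ref{eq:map-graph})$,
\[
\Vert T_{\mu}-T_{\nu}\Vert_{L^{2}(\rho)}^{2}\leqslant2\int|y-z|^{2}d\gamma+2\mathcal{E}_{\nu}(\sigma_{T_{\mu},\gamma}),
\]
and we bound the two terms separately.

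The direct term is handled by Proposition~\ref{prop:W1-nth-compact}, which gives $\int|y-z|^{2}d\gamma\leqslant2R_{Y}\delta$. The same proposition bounds the objective gap of the repaired plan: $\mathcal{D}_{\nu}(\sigma_{T_{\mu},\gamma})\leqslant(2M_{\rho,n})(2R_{Y})^{1/n}\delta^{1-1/n}$. For the map error we invoke Corollary~\ref{cor:moment-compactE}, i.e.\ the source profile bound $\mathfrak{A}_{\rho}(h)\leqslant B_{d,\rho,p,n}(a)h^{\eta}$ of Proposition~\ref{prop:Lp-moment} combined with Theorem~\ref{thm:error-bound} at $R=R_{Y}$. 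Since the targets live in $B(b,R_{Y})$, we have $\mathfrak{T}_{\nu,b}(R_{Y})=0$. This gives $\mathcal{E}_{\nu}(\sigma)\leqslant B_{d,\rho,p,n,c}(a)R_{Y}^{(2+\eta)/(1+\eta)}\mathcal{D}_{\nu}(\sigma)^{\lambda}$. Substituting the gap bound yields
\[
\mathcal{E}_{\nu}(\sigma_{T_{\mu},\gamma})\leqslant C_{1}\delta^{\lambda(n-1)/n},\qquad C_{1}=B_{d,\rho,p,n,c}(a)R_{Y}^{(2+\eta)/(1+\eta)}\bigl[2M_{\rho,n}(2R_{Y})^{1/n}\bigr]^{\lambda}.
\]

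To combine, note that $\lambda<1$ and $(n-1)/n<1$, so $\delta\leqslant\delta^{\lambda(n-1)/n}$ whenever $0<\delta\leqslant1$. Hence
\[
\Vert T_{\mu}-T_{\nu}\Vert_{L^{2}(\rho)}^{2}\leqslant(4R_{Y}+2C_{1})\delta^{\lambda(n-1)/n},
\]
and taking square roots gives the exponent $\lambda(n-1)/(2n)$. When $\delta>1$ we use the trivial bound $\Vert T_{\mu}-T_{\nu}\Vert_{L^{2}(\rho)}\leqslant2R_{Y}\leqslant2R_{Y}\delta^{\lambda(n-1)/(2n)}$, and $\delta=0$ follows from uniqueness of the Brenier map. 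The final constant is $\max\{(4R_{Y}+2C_{1})^{1/2},2R_{Y}\}$.

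The main point needing care is the dependence of the constant. The stated $C$ depends on $d,s,p,R_{Y}$ but not on $a$, $b$ or the targets, whereas $C_{1}$ contains $M_{\rho,n}(a)$, $\Vert f\Vert_{p}$ and $B_{d,\rho,p,n}(a)$. Independence from $b$ is automatic, since only $R_{Y}$ enters. To remove the dependence on $a$, I would take the infimum over $a$ in both $M_{\rho,n}(a)$ and $B_{d,\rho,p,n}(a)$; this is legitimate because Proposition~\ref{prop:W1-nth-compact} already optimizes over the center. The resulting constant still depends on $\rho$ through $\Vert f\Vert_{p}$ and the minimized moment $M_{\rho,n}$, so in the write-up the dependence on the source should be made explicit (it is implicit in the statement). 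I would also double-check that Corollary~\ref{cor:moment-compactE} applies when $\mathcal{D}_{\nu}=0$, which again reduces to uniqueness.
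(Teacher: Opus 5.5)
Your proposal is correct and follows the paper's proof essentially step for step: the repaired plan, Proposition~\ref{prop:W1-nth-compact} for both the direct coupling term and the objective gap, Corollary~\ref{cor:moment-compactE} for the map error, and the same split into $0<\delta\leqslant1$ and $\delta>1$, ending with the same form of final constant. Your remark about the constant is also right: the paper fixes the center $a_{\ast}$ that minimizes $M_{\rho,n}(a)$, and its constant likewise depends on $\Vert f\Vert_{L^{p}}$, $n$ and $M_{\rho,n}$, so the claimed dependence ``only on $d,s,p,R_{Y}$'' should be read as holding for fixed source data.
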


\begin{proof}
Let $a_{\ast}$ attains the minimum in the definition of $M_{\rho,n}\leqslant
M_{\rho,n}(a)<\infty$. Let $\gamma$ be optimal for $W_{1}\left(  \mu
,\nu\right)  $ and let $\sigma_{T_{\mu},\gamma}$ be the repaired plan from
Definition~\ref{def:repaired-plan}. Applying Corollary
\ref{cor:moment-compactE} with source center $a_{\ast}$ gives%
\[
\mathcal{E}_{\nu}(\sigma_{T_{\mu},\gamma})\leqslant C_{1}\mathcal{D}_{\nu
}(\sigma_{T_{\mu},\gamma})^{\eta/(1+\eta)}=C_{1}\mathcal{D}_{\nu}%
(\sigma_{T_{\mu},\gamma})^{\lambda},
\]
where $C_{1}=B_{d,\rho,p,n,c}\left(  a_{\ast}\right)  R_{Y}^{(2+\eta
)/(1+\eta)}.$ With $\delta=W_{1}\left(  \mu,\nu\right)  ,$%
\[
\int\left\vert y-z\right\vert ^{2}d\gamma\leqslant2R_{Y}\delta.
\]
Using $\left(  \ref{eq:D-nth-compact}\right)  ,$ we also have%
\[
\mathcal{D}_{\nu}\left(  \sigma_{T_{\mu},\gamma}\right)  \leqslant\left(
2M_{\rho,n}\right)  \left(  2R_{Y}\right)  ^{1/n}\delta^{1-1/n}.
\]
So combining these estimates yields%
\[
\left\Vert T_{\mu}-T_{\nu}\right\Vert _{L^{2}\left(  \rho\right)  }%
^{2}\leqslant4R_{Y}\delta+2C_{1}\left(  2M_{\rho,n}\right)  ^{\lambda}\left(
2R_{Y}\right)  ^{\lambda/n}\delta^{\lambda\left(  n-1\right)  /n}.
\]
For $0<\delta\leqslant1,$ the slower exponent is $\lambda\left(  n-1\right)
/n.$ Hence
\[
\left\Vert T_{\mu}-T_{\nu}\right\Vert _{L^{2}\left(  \rho\right)  }%
\leqslant\left(  4R_{Y}+2C_{1}\left(  2M_{\rho,n}\right)  ^{\lambda}\left(
2R_{Y}\right)  ^{\lambda/n}\right)  ^{1/2}\delta^{\lambda\left(  n-1\right)
/\left(  2n\right)  }.
\]
For $\delta>1,$%
\[
\left\Vert T_{\mu}-T_{\nu}\right\Vert _{L^{2}\left(  \rho\right)  }%
\leqslant2R_{Y}\leqslant2R_{Y}\delta^{\lambda\left(  n-1\right)  /\left(
2n\right)  }.
\]
Hence we can take $C=\max\left\{  \left(  4R_{Y}+2C_{1}\left(  2M_{\rho
,n}\right)  ^{\lambda}\left(  2R_{Y}\right)  ^{\lambda/n}\right)
^{1/2},2R_{Y}\right\}  .$
\end{proof}

We now provide the stability of Brenier map for compact source and target with
moment. Since it will used later, we will give a version that slightly weakens
the target moment condition to squared tail control. The tail radius cutoff is
set to $1$ to only look at the the tail for measures that might have
singularities near the origin. A probability measure with $\beta$-th moment
implies the tail bound, but not the reverse. The tail bound controls behaviour
near infinity.

\begin{theorem}
[Stability-compact-tail]\label{thm:squared-target}Assume $\rho=fdx$ is a
probability supported in $B(a,R_{X})$, $R_{X}>0$, $f\in L^{p}\left(
\mathbb{R}^{d}\right)  $, $1<p\leqslant\infty$, $s=1-1/p$ and the targets
$\mu,$ $\nu\in\mathcal{P}_{2}\left(  \mathbb{R}^{d}\right)  $ satisfy the tail
control: for some $\beta>0$, $J\geqslant0$, and a common center $b$, that for
$R\geqslant1,$
\begin{equation}
\tau_{2,\mu,b}(R)\leqslant JR^{-\beta},\text{ }\tau_{2,\nu,b}(R)\leqslant
JR^{-\beta}. \label{eq:source-2tail}%
\end{equation}
Let%
\[
H_{s,\beta}=\beta\left(  s+1\right)  +s+2.
\]
Then with $\delta=W_{1}(\mu,\nu),$%
\[
\left\Vert T_{\mu}-T_{\nu}\right\Vert _{L^{2}(\rho)}\leqslant C\delta^{\beta
s/\left(  2H_{s,\beta}\right)  }.
\]
The constant $C$ depends only on $d,p,R_{X},J,$ but not the center $b.$
\end{theorem}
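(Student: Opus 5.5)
We follow the proof of Theorem~\ref{thm:compact-maps}, replacing the compact-target steps by tail estimates. Let $\gamma$ be optimal for $W_{1}(\mu,\nu)$ and let $\sigma=\sigma_{T_{\mu},\gamma}$ be the repaired plan of Definition~\ref{def:repaired-plan}. By $\left(\ref{eq:map-graph}\right)$,
\[
\left\Vert T_{\mu}-T_{\nu}\right\Vert _{L^{2}(\rho)}^{2}\leqslant2\int\left\vert y-z\right\vert ^{2}d\gamma+2\mathcal{E}_{\nu}(\sigma).
\]
We bound the two terms separately, each by a power of $\delta=W_{1}(\mu,\nu)$ for $0<\delta\leqslant1$. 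For $\delta>1$ we use a crude bound. Assumption~\ref{ass:brenier} holds since $\rho\ll dx$ and all measures are in $\mathcal{P}_{2}$. The case $\delta=0$ follows from uniqueness.

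\emph{Direct coupling term.} Fix $R\geqslant1$. On $\{|y-b|\leqslant R,|z-b|\leqslant R\}$ we have $|y-z|^{2}\leqslant2R|y-z|$. On the complement we have $|y-z|^{2}\leqslant4\max\{|y-b|,|z-b|\}^{2}$. Whenever this maximum exceeds $R$, its square is at most $|y-b|^{2}1_{\{|y-b|>R\}}+|z-b|^{2}1_{\{|z-b|>R\}}$. Integrating against $\gamma$ and using its marginals $\mu,\nu$ together with $\left(\ref{eq:source-2tail}\right)$ gives
\[
\int|y-z|^{2}d\gamma\leqslant2R\delta+8JR^{-\beta}.
\]
The choice $R=\delta^{-1/(1+\beta)}\geqslant1$ yields $O(\delta^{\beta/(1+\beta)})$. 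Since $H_{s,\beta}\geqslant s(1+\beta)$, we have $\beta/(1+\beta)\geqslant\beta s/H_{s,\beta}$, so this term is dominated by $\delta^{\beta s/H_{s,\beta}}$ when $\delta\leqslant1$.

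\emph{Map error term.} As in $\left(\ref{thm:compact-mapsD}\right)$, Lemma~\ref{lem:ctrl-by-corr} with $|x-a|\leqslant R_{X}$ gives $\mathcal{D}_{\nu}(\sigma)\leqslant2R_{X}\delta$. We apply Theorem~\ref{thm:error-bound} with center $b$, Proposition~\ref{prop:compact-Lp} for $\mathfrak{A}_{\rho}$, and $\mathfrak{T}_{\nu,b}(R)=8\tau_{2,\nu,b}(R)\leqslant8JR^{-\beta}$ for $R\geqslant1$. Optimizing in $h$ exactly as in the proof of Theorem~\ref{thm:power} (Lemma~\ref{lem:two-power}) gives, with $D=\mathcal{D}_{\nu}(\sigma)$,
\[
\mathcal{E}_{\nu}(\sigma)\leqslant B_{d,\rho,p}R_{X}^{(d-1)s/(1+s)}R^{(2+s)/(1+s)}D^{s/(1+s)}+8JR^{-\beta},\quad R\geqslant1.
\]
We balance the two terms with $R=D^{-s/H_{s,\beta}}$, since $\frac{2+s}{1+s}+\beta=\frac{H_{s,\beta}}{1+s}$. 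This gives $\mathcal{E}_{\nu}(\sigma)\lesssim D^{\beta s/H_{s,\beta}}\lesssim\delta^{\beta s/H_{s,\beta}}$. The constraint $R\geqslant1$ requires $D\leqslant1$. Since the right-hand side is monotone in $D$, we may replace $D$ by $D^{\prime}=2R_{X}\delta$. When $D^{\prime}>1$, i.e. $\delta>1/(2R_{X})$, we instead take $R=1$, which gives a bounded quantity that is again $\lesssim\delta^{\beta s/H_{s,\beta}}$ on the range $1/(2R_{X})<\delta\leqslant1$ after adjusting the constant. The same case distinction disposes of $D=0$.

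\emph{Large $\delta$ and bookkeeping.} For $\delta>1$ we use the uniform bound $\|T_{\mu}-T_{\nu}\|^{2}\leqslant2\int|y-b|^{2}d\mu+2\int|z-b|^{2}d\nu\leqslant4(1+J)$. This holds because the mass inside $B(b,1)$ contributes at most $1$ and the rest is controlled by $\tau_{2,\cdot,b}(1)\leqslant J$. Taking square roots and the maximum of the constants gives the exponent $\beta s/(2H_{s,\beta})$.

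The main obstacle is the bookkeeping that keeps the constant independent of $b$ and that enforces $R\geqslant1$ throughout, since the tail hypothesis is only assumed for $R\geqslant1$. The constant also depends on $\|f\|_{L^{p}}$ through $K_{d,\rho,p}$; we regard this as part of the dependence on $p$ and the source, as in Theorem~\ref{thm:compact-maps}.
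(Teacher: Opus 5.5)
Your proposal is correct and follows essentially the same route as the paper: the repaired plan built from a $W_1$-optimal coupling, the bound $\mathcal{D}_{\nu}(\sigma_{T_{\mu},\gamma})\leqslant 2R_X\delta$, the split $\int|y-z|^2d\gamma\leqslant 2R\delta+8JR^{-\beta}$ with $R=\delta^{-1/(1+\beta)}$, Theorem~\ref{thm:error-bound} optimized first in $h$ and then in $R$, and the crude bound $4(1+J)$ for $\delta>1$. The only difference is cosmetic: the paper absorbs the factor $2R_X$ into the constant and takes $R=\delta^{-s/H_{s,\beta}}$, so $R\geqslant 1$ holds for all $\delta\leqslant 1$. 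Your extra case $1/(2R_X)<\delta\leqslant 1$ with $R=1$ is therefore valid but unnecessary.
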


\begin{proof}
Proposition~\ref{prop:compact-Lp} gives, for every $h>0$,%
\begin{equation}
\mathfrak{A}_{\rho}(h)\leqslant Bh^{s}\text{ with }B=K_{d,\rho,p}%
R_{X}^{\left(  d-1\right)  s}>0. \label{eq:compact-Lp}%
\end{equation}

First assume $0<\delta\leqslant1$. Choose a optimal for $W_{1}\left(  \mu
,\nu\right)  $ $\gamma$ and the repaired plan $\sigma_{T_{\mu},\gamma}$. For
every triple law in $\left(  \ref{eq:D-product}\right)  $,
\[
\int|x-a||z-y|d\Lambda\leqslant R_{X}\int|z-y|d\gamma=R_{X}\delta.
\]
Thus $\left(  \ref{eq:D-C}\right)  $ yields%
\[
\mathcal{D}_{\nu}(\sigma_{T_{\mu},\gamma})\leqslant2R_{X}\delta.
\]
To bound the direct coupling term, write $Z=\left\vert y-z\right\vert $ and
take $R>0.$ On $\left\{  Z>2R\right\}  $, $Z^{2}\leqslant4\max\{|y-b|^{2}%
,|z-b|^{2}\}$. Hence,
\[
Z^{2}1_{\{Z>2R\}}\leqslant4|y-b|^{2}1_{\{|y-b|>R\}}+4|z-b|^{2}1_{\{|z-b|>R\}}%
.
\]
On $\{Z\leqslant2R\}$, $Z^{2}\leqslant2RZ$. Integrating the two parts gives
for $R\geqslant1$,
\[
\int Z^{2}d\gamma\leqslant2R\delta+4\tau_{2,\mu,b}(R)+4\tau_{2,\nu
,b}(R)\leqslant2R\delta+8JR^{-\beta}.
\]
Choosing $R=\delta^{-1/(\beta+1)}\geqslant1$ gives
\[
\int Z^{2}d\gamma\leqslant(2+8J)\delta^{\beta/(\beta+1)}.
\]
Theorem~\ref{thm:error-bound} combined with the source oscillation bound
$\left(  \ref{eq:compact-Lp}\right)  $ and the target tail bound $\left(
\ref{eq:source-2tail}\right)  $ gives
\[
\mathcal{E}_{\nu}\left(  \sigma_{T_{\mu},\gamma}\right)  \leqslant8R_{X}\delta
Rh^{-1}+18R^{2}Bh^{s}+8JR^{-\beta},\text{ }R\geqslant1,\ h>0.
\]
For fixed $R$, choosing $h$ to minimizes the sum of the first two terms
yields, for $R\geqslant1,$%
\[
\mathcal{E}_{\nu}\left(  \sigma_{T_{\mu},\gamma}\right)  \leqslant C_{1}%
\delta^{s/\left(  s+1\right)  }R^{\left(  s+2\right)  /\left(  s+1\right)
}+8JR^{-\beta},
\]
where $C_{1}=c_{s,1}\left(  8R_{X}\right)  ^{s/\left(  s+1\right)  }\left(
18B\right)  ^{1/\left(  s+1\right)  }.$ Now choose $R=\delta^{-s/\left[
\beta\left(  s+1\right)  +s+2\right]  }\geqslant1$ to balance the power.
Substitution gives%
\[
\mathcal{E}_{\nu}\left(  \sigma_{T_{\mu},\gamma}\right)  \leqslant\left(
C_{1}+8J\right)  \delta^{\beta s/\left[  \beta\left(  s+1\right)  +s+2\right]
}.
\]
$\left(  \ref{eq:map-graph}\right)  $ now gives
\[
\Vert T_{\mu}-T_{\nu}\Vert_{L^{2}(\rho)}^{2}\leqslant\left(  4+16J\right)
\delta^{\beta/(\beta+1)}+\left(  2C_{1}+16J\right)  \delta^{\beta s/\left[
\beta\left(  s+1\right)  +s+2\right]  }.
\]
The latter term has a slower exponent. Hence%
\[
\Vert T_{\mu}-T_{\nu}\Vert_{L^{2}(\rho)}\leqslant\left(  (4+2C_{1}+32J\right)
^{1/2}\delta^{\beta s/\left[  2\left[  \beta\left(  s+1\right)  +s+2\right]
\right]  }.
\]
Taking square roots proves the theorem in this range. For $\delta=0$,
uniqueness gives equal maps. For $\delta>1$, the tail bound condition gives
\[
\int|y-b|^{2}d\mu,\text{ }\int|z-b|^{2}d\nu\leqslant1+J.
\]
Thus%
\[
\Vert T_{\mu}-T_{\nu}\Vert_{L^{2}(\rho)}^{2}\leqslant2\int|y-b|^{2}%
d\gamma+2\int|z-b|^{2}d\gamma\leqslant4\left(  1+J\right)  ,
\]
which after taking square root is bounded by the same $\delta$-power term,%
\[
\Vert T_{\mu}-T_{\nu}\Vert_{L^{2}(\rho)}\leqslant2\left(  1+J\right)
^{1/2}\delta^{\beta s/\left[  2\left[  \beta\left(  s+1\right)  +s+2\right]
\right]  }.
\]
This proves the theorem with constant $C=\max\left\{  \left(  (4+2C_{1}%
+32J\right)  ^{1/2},2\left(  1+J\right)  ^{1/2}\right\}  .$
\end{proof}

\begin{theorem}
[Stability-moment-moment]\label{thm:both-unbounded}Assume
Assumption~\ref{ass:brenier}, let $\rho=fdx$, $f\in L^{p}\left(
\mathbb{R}^{d}\right)  $, $1<p\leqslant\infty$, $s=1-1/p$, $M_{\rho
,n}(a)<\infty$ for some $a\in$ $\mathbb{R}^{d}$, $n\geqslant2$ and the targets
$\mu,$ $\nu$ have $M_{\mu,\nu,m}<\infty$. Let
\[
\alpha=\frac{m(n-1)-n}{n(m-1)},
\]
and let $\eta$ and $\Theta_{\eta,m}$ be defined as in
Theorem~\ref{thm:unbounded-power}. Then%
\begin{equation}
\Vert T_{\mu}-T_{\nu}\Vert_{L^{2}(\rho)}\leqslant CW_{1}(\mu,\nu
)^{\alpha\Theta_{\eta,m}/2}, \label{eq:both-unbounded}%
\end{equation}
The constant $C$ depends only on $d,p,n,m,M_{\rho,n},M_{\mu,\nu,m}$.
\end{theorem}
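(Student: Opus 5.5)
We follow the template of Theorem~\ref{thm:compact-maps} and the stability-moment-compact theorem, with the $W_1$-repair of Proposition~\ref{prop:W1-moments} as input. Let $\delta=W_{1}(\mu,\nu)$ and choose $\gamma$ optimal for $W_{1}(\mu,\nu)$. Let $\sigma_{T_{\mu},\gamma}\in\Pi(\rho,\nu)$ be the repaired plan of Definition~\ref{def:repaired-plan}. The starting point is the graph inequality $\left(\ref{eq:map-graph}\right)$,
\[
\Vert T_{\mu}-T_{\nu}\Vert_{L^{2}(\rho)}^{2}\leqslant2\int|y-z|^{2}d\gamma+2\mathcal{E}_{\nu}(\sigma_{T_{\mu},\gamma}),
\]
so two terms must be controlled. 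The direct coupling term is handled by $\left(\ref{eq:W1-direct}\right)$:
\[
\int|y-z|^{2}d\gamma\leqslant(2M_{\mu,\nu,m})^{m/(m-1)}\delta^{(m-2)/(m-1)}.
\]

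For the map error term we apply Theorem~\ref{thm:unbounded-power} to the pair $(\rho,\nu)$ with source center $a_{\ast}$ minimizing $M_{\rho,n}$. Its target input is $M_{\nu,m}\leqslant M_{\nu,m}(b_{\ast})\leqslant M_{\mu,\nu,m}$, which gives
\[
\mathcal{E}_{\nu}(\sigma_{T_{\mu},\gamma})\leqslant Q\,\mathcal{D}_{\nu}(\sigma_{T_{\mu},\gamma})^{\Theta_{\eta,m}}.
\]
Here $Q$ depends only on $d,p,n,m,M_{\rho,n},M_{\mu,\nu,m}$ and $\Vert f\Vert_{p}$; the norm enters through $K_{d,\rho,p}$, so it should be included among the dependencies. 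The objective gap is bounded by $\left(\ref{eq:W1-D}\right)$, namely $\mathcal{D}_{\nu}(\sigma_{T_{\mu},\gamma})\leqslant(2M_{\rho,n})(2M_{\mu,\nu,m})^{1-\alpha}\delta^{\alpha}$. Combining the two gives $\mathcal{E}_{\nu}\leqslant C_{2}\delta^{\alpha\Theta_{\eta,m}}$.

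It remains to compare exponents for $0<\delta\leqslant1$, and this is the only real check. We have $\Theta_{\eta,m}=\eta(m-2)/H_{\eta,m}<(m-2)/(m-1)$, since $\eta(m-1)<m(1+\eta)-\eta$. We also have $\alpha<1$. Hence $\alpha\Theta_{\eta,m}<(m-2)/(m-1)$, so for $\delta\leqslant1$ the repaired-plan term dominates:
\[
\Vert T_{\mu}-T_{\nu}\Vert_{L^{2}(\rho)}^{2}\leqslant C_{3}\delta^{\alpha\Theta_{\eta,m}}.
\]
Taking square roots gives $\left(\ref{eq:both-unbounded}\right)$ in this range.

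For $\delta>1$ we use the crude bound
\[
\Vert T_{\mu}-T_{\nu}\Vert_{L^{2}(\rho)}^{2}\leqslant2\int|y-b_{\ast}|^{2}d\mu+2\int|z-b_{\ast}|^{2}d\nu\leqslant4M_{\mu,\nu,m}^{2},
\]
which holds by H\"older since $m>2$; it is then at most $4M_{\mu,\nu,m}^{2}\delta^{\alpha\Theta_{\eta,m}}$. The degenerate cases are handled separately. If $\delta=0$, the maps coincide by uniqueness. If $\mathcal{D}_{\nu}=0$ or $M_{\nu,m}=0$, the conclusion already follows from the degenerate cases treated in Theorem~\ref{thm:unbounded-power}. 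The final constant is the maximum of the two constants from the two ranges of $\delta$.
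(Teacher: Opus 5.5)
Your proposal is correct and follows the paper's proof essentially step for step. It uses the $W_{1}$-optimal repaired plan, feeds the direct-coupling and objective-gap bounds of the moment-moment $W_{1}$-repair into the graph inequality together with the moment-moment error bound at the optimal source center $a_{\ast}$, compares exponents via $\alpha\Theta_{\eta,m}<(m-2)/(m-1)$ for $0<\delta\leqslant1$, and closes with the crude bound $2M_{\mu,\nu,m}$ for $\delta>1$. Your remark that the constant also depends on $\Vert f\Vert_{p}$ (through $K_{d,\rho,p}$) is accurate and corrects a small omission in the paper's stated list of dependencies.
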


\begin{proof}
Choose a $W_{1}$-optimal coupling $\gamma\in\Pi(\mu,\nu)$, form the triple law
$\Lambda_{T_{\mu},\gamma}$ in $\left(  \ref{eq:repaired-triple}\right)  $, and
let $\sigma_{T_{\mu},\gamma}\in\Pi(\rho,\nu)$ be its repaired $(x,z)$%
-marginal. With $\delta=W_{1}\left(  \mu,\nu\right)  ,$ $\left(
\ref{eq:W1-D}\right)  $ gives
\[
\mathcal{D}_{\nu}\left(  \sigma_{T_{\mu},\gamma}\right)  \leqslant C_{1}%
\delta^{\alpha},
\]
where $C_{1}=\left(  2M_{\rho,n}\right)  \left(  2M_{\mu,\nu,m}\right)
^{1-\alpha}$. Now choosing $a_{\ast}$ that attains the infimum in the
definition of $M_{\rho,n}$ and applying $\left(  \ref{eq:unbounded-E}\right)
$ with this center $a_{\ast}$ give
\[
\mathcal{E}_{\nu}\left(  \sigma_{T_{\mu},\gamma}\right)  \leqslant
C_{2}\mathcal{D}_{\nu}\left(  \sigma_{T_{\mu},\gamma}\right)  ^{\Theta
_{\eta,m}}\leqslant C_{2}C_{1}^{\Theta_{\eta,m}}\delta^{\alpha\Theta_{\eta,m}%
},
\]
where $C_{2}=Q_{d,\rho,p,n,\nu,m}\left(  a_{\ast}\right)  .$ The $\left(
y,z\right)  $-coupling term is controlled by $\left(  \ref{eq:W1-direct}%
\right)  $,
\[
\int\left\vert y-z\right\vert ^{2}d\gamma\leqslant C_{3}\delta^{\left(
m-2\right)  /\left(  m-1\right)  },
\]
where $C_{3}=\left(  2M_{\mu,\nu,m}\right)  ^{m/\left(  m-1\right)  }.$
Substitution in $\left(  \ref{eq:map-graph}\right)  $ yields
\[
\left\Vert T_{\mu}-T_{\nu}\right\Vert _{L^{2}(\rho)}^{2}\leqslant2C_{3}%
\delta^{\left(  m-2\right)  /\left(  m-1\right)  }+2C_{2}C_{1}^{\Theta
_{\eta,m}}\delta^{\alpha\Theta_{\eta,m}}.
\]
It remains to identify the slower power of $\delta$. Since $m>2,$%
\[
0<\alpha=\frac{m(n-1)-n}{n(m-1)}<1,\text{ }\Theta_{\eta,m}=\frac{\eta\left(
m-2\right)  }{m(1+\eta)-\eta}<\frac{m-2}{m-1}.
\]
Thus, for $0<\delta\leqslant1$,%
\[
\left\Vert T_{\mu}-T_{\nu}\right\Vert _{L^{2}(\rho)}^{2}\leqslant\left(
2C_{3}+2C_{2}C_{1}^{\Theta_{\eta,m}}\right)  \delta^{\alpha\Theta_{\eta,m}}.
\]
Taking a square root proves $\left(  \ref{eq:both-unbounded}\right)  $. \ If
$\delta=0$, then $\mu=\nu$ and uniqueness of the Brenier map gives $T_{\mu
}=T_{\nu},$ $\rho$-a.e.. For $\delta>1$, for every $b$,
\begin{align*}
\left\Vert T_{\mu}-T_{\nu}\right\Vert _{L^{2}\left(  \rho\right)  }^{2}  &
\leqslant2\int\left\vert T_{\mu}-b\right\vert ^{2}d\rho+2\int\left\vert
T_{\nu}-b\right\vert ^{2}d\rho\\
&  =2\int\left\vert y-b\right\vert ^{2}d\mu+2\int\left\vert z-b\right\vert
^{2}d\nu\leqslant4\max\left\{  M_{\mu,m}\left(  b\right)  ,M_{\nu,m}\left(
b\right)  \right\}  ^{2}.
\end{align*}
The last step uses monotonicity of moment norm and that $m>2$. Taking the
infimum over $b$ gives%
\[
\left\Vert T_{\mu}-T_{\nu}\right\Vert _{L^{2}\left(  \rho\right)  }%
\leqslant2M_{\mu,\nu,m}\leqslant2M_{\mu,\nu,m}\delta^{\alpha\Theta_{\eta,m}%
/2}.
\]
Thus one constant valid for all $\delta\geqslant0$ is $\max\{(2C_{3}%
+2C_{2}C_{1}^{\Theta_{\eta,m}})^{1/2},2M_{\mu,\nu,m}\}$.
\end{proof}

\subsection{Sharpness of Theorem~\ref{thm:both-unbounded} in dimension one}

\label{sec:sharp-1D}

The sharp error bound exponent in Theorem~\ref{thm:unbounded-power} does not
necessarily make the exponent for the Brenier map bound $\left(
\ref{eq:both-unbounded}\right)  $ of Theorem~\ref{thm:both-unbounded}\ sharp,
although the latter is indeed sharp for compact source and target proved in
Theorem \ref{thm:compact-maps}. In dimension one the exponent is in fact
strictly suboptimal, as the following shows.

Let $d=1$. Recall that for any probability measure $\eta$ on $\mathbb{R}$, the
distribution function and its quantile function are given by
\[
F_{\eta}(x)=\eta\left(  ({-\infty},x]\right)  ,\text{ }G_{\eta}(t)=\inf
\left\{  x\in\mathbb{R}:F_{\eta}(x)\geqslant t\right\}  ,\text{ }0<t<1.
\]
Observe that $G_{\eta}(t)\leqslant x$ if and only if $F_{\eta}(x)\geqslant t.$
Hence%
\[
\left\{  t:G_{\eta}(t)>x\right\}  =\left(  F_{\eta}(x),1\right)  .
\]

\begin{theorem}
\label{thm:1d-stability}Let $\rho,$ $\mu,$ $\nu\in\mathcal{P}_{2}\left(
\mathbb{R}\right)  .$ Suppose that $\rho\ll dx$ and $M_{\mu,\nu,m}<\infty$ for
$m>2$. Then with $\delta=W_{1}\left(  \mu,\nu\right)  ,$%
\begin{equation}
\Vert T_{\mu}-T_{\nu}\Vert_{L^{2}(\rho)}\leqslant(2M_{\mu,\nu,m}%
)^{m/[2(m-1)]}\delta^{(m-2)/[2(m-1)]}.\label{eq:1D-two-map}%
\end{equation}
The exponent is sharp. Note that the constant does not depend on the source
parameters. Note that if $M_{\mu,\nu,m}=0$, then both targets are forced to be
the same dirac measure on the point that attains $M_{\mu,\nu,m}$.
\end{theorem}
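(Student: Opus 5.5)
In dimension one the Brenier maps are explicit: with $\rho\ll dx$, $F_\rho$ is continuous, $F_\rho(x)$ is uniform on $(0,1)$ under $\rho$, and $T_\mu=G_\mu\circ F_\rho$, $T_\nu=G_\nu\circ F_\rho$, $\rho$-a.e. (monotone rearrangement). Hence
\[
\Vert T_\mu-T_\nu\Vert_{L^2(\rho)}^2=\int_0^1|G_\mu(t)-G_\nu(t)|^2dt .
\]
The quantile coupling $\gamma_\ast=(G_\mu,G_\nu)_\#\mathcal{L}^1|_{(0,1)}$ is therefore the relevant target coupling. It is simultaneously $W_2$- and $W_1$-optimal on $\mathbb{R}$, since in 1D the monotone coupling is optimal for every convex cost $|y-z|^r$, $r\geqslant1$. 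In particular $\int_0^1|G_\mu-G_\nu|dt=W_1(\mu,\nu)=\delta$. (Equivalently, the repaired plan $\sigma_{T_\mu,\gamma_\ast}$ of Definition~\ref{def:repaired-plan} is exactly $(\operatorname{Id},T_\nu)_\#\rho$, so $\mathcal{E}_\nu=0$ in $\left(\ref{eq:map-graph}\right)$. It is cleaner, though, to use the identity directly and avoid the factor $2$.) The source enters nowhere, which explains why the constant is source-free.

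Next I apply the interpolation of Proposition~\ref{prop:W1-moments}, i.e. $\left(\ref{eq:W1-direct}\right)$, but to the coupling $\gamma_\ast$, which is $W_1$-optimal, so that proposition applies verbatim. Hölder with $\frac12=\frac{m-2}{2(m-1)}+\frac{m}{2(m-1)}\frac1m$ gives
\[
\int|y-z|^2d\gamma_\ast\leqslant\Vert y-z\Vert_{L^1(\gamma_\ast)}^{(m-2)/(m-1)}\Vert y-z\Vert_{L^m(\gamma_\ast)}^{m/(m-1)},
\]
and Minkowski about the minimizing center $b_\ast$ bounds the $L^m$ norm by $2M_{\mu,\nu,m}$. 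Taking the square root yields $\left(\ref{eq:1D-two-map}\right)$. The case $M_{\mu,\nu,m}=0$ forces $\mu=\nu=\delta_{b_\ast}$ and is trivial; $\delta=0$ gives $G_\mu=G_\nu$ a.e.

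For sharpness I would build a two-point family in which the Hölder interpolation is nearly tight, meaning $|G_\mu-G_\nu|$ is constant on a small set. Take $\rho$ uniform on $[0,1]$, $\nu=\delta_0$, and $\mu=(1-q)\delta_0+q\delta_R$ with $q=R^{-m}$. Then $M_{\mu,\nu,m}\leqslant M_{\mu,m}(0)=1$ is bounded uniformly, and $\delta=qR=R^{1-m}$. Also $\Vert T_\mu-T_\nu\Vert^2=qR^2=R^{2-m}=\delta^{(m-2)/(m-1)}$, which matches the exponent exactly as $R\to\infty$.

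The main obstacle is justifying the quantile representation and the $W_1$-optimality of the monotone coupling for general (possibly atomic) $\mu,\nu$. I would cite the standard 1D facts (e.g. Villani, or Santambrogio, Ch. 2) and check that $F_\rho(x)$ is uniform because $\rho$ is atomless. Everything else is the same Hölder computation already performed in Proposition~\ref{prop:W1-moments}.
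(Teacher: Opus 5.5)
Your proposal is correct and follows the paper's proof essentially step for step. The common steps are the quantile representation $T_\mu=G_\mu\circ F_\rho$, $T_\nu=G_\nu\circ F_\rho$, the $W_1$-optimality of the quantile coupling, the same H\"older interpolation combined with Minkowski about $b_\ast$, and the same two-point sharpness family with the roles of $\mu,\nu$ interchanged. The only difference is that you cite $W_1$-optimality as the standard one-dimensional fact for convex costs, whereas the paper checks it directly through the identity $|y-z|=\int_{\mathbb{R}}|1_{\{y>r\}}-1_{\{z>r\}}|\,dr$.

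One caution about your unused aside. The equality $\sigma_{T_\mu,\gamma_\ast}=(\operatorname{Id},T_\nu)_\#\rho$ holds only when the kernels $k_y$ are Dirac for $\mu$-a.e.\ $y$ (e.g.\ $\mu$ atomless). It fails when $\gamma_\ast$ splits an atom of $\mu$: for $\rho$ uniform on $(0,1)$, $\mu=\delta_0$, $\nu=(1-q)\delta_0+q\delta_R$, the repaired plan is $\rho\otimes\nu$ with $\mathcal{E}_\nu=2q(1-q)R^2\neq0$. Since your main argument uses the identity $\Vert T_\mu-T_\nu\Vert_{L^2(\rho)}^2=\int_0^1|G_\mu-G_\nu|^2dt$ directly, nothing depends on this aside.
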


\begin{proof}
\textbf{1}. Let $X\sim\rho,$ then the absolute continuity of $\rho$ ensures
that $F_{\rho}(X)$ is uniform on $\left(  0,1\right)  $ and
\begin{equation}
\left\{  t:G_{\mu}(t)>x\right\}  =\left(  F_{\mu}(x),1\right)  ,\text{
\ }\left\{  t:G_{\nu}(t)>x\right\}  =\left(  F_{\nu}(x),1\right)  .
\label{eq:distribution-inverse}%
\end{equation}
Moreover the non-decreasing maps $G_{\mu}(F_{\rho}(x)),$ $G_{\nu}(F_{\rho
}(x))$ pushes $\rho$ to the targets $\mu,$ $\nu$, and they are gradient of
convex functions. By uniqueness they equal $T_{\mu}\left(  x\right)  ,$
$T_{\nu}\left(  x\right)  $ for $\rho$-a.e. $x.$

\textbf{2}. Let $\gamma_{G}=(G_{\mu},G_{\nu})_{\#}dt$, here $dt$ denotes
uniform measure on $\left(  0,1\right)  .$ We claim that the joint law
$\gamma_{G}$ is optimal for $W_{1}\left(  \mu,\nu\right)  $. Indeed, for any
$\gamma\in\Pi(\mu,\nu)$, we have by Tonelli's theorem,
\begin{align*}
\int\left\vert y-z\right\vert d\gamma(y,z)  &  =\int_{\mathbb{R}}%
\int\left\vert 1_{\left\{  y>r\right\}  }-1_{\left\{  z>r\right\}
}\right\vert d\gamma(y,z)dr\geqslant\int_{\mathbb{R}}\left\vert \int\left(
1_{\left\{  y>r\right\}  }-1_{\left\{  z>r\right\}  }\right)  d\gamma
(y,z)\right\vert dr\\
&  =\int_{\mathbb{R}}|\mu((r,\infty))-\nu((r,\infty))|dr.
\end{align*}
The lower bound already equals $W_{1}\left(  \mu,\nu\right)  $ by
\cite[Proposition 2.17]{santambrogio2015optimal}. It remains to show that the
inequality is attained by $\gamma_{G}$. Note that $\gamma_{G}\in\Pi(\mu,\nu).$
In $\left(  \ref{eq:distribution-inverse}\right)  ,$ one of those terminal
intervals contains the other, so their symmetric difference is the interval
between the two left endpoints. It follows that%
\[
\int_{0}^{1}\left\vert 1_{\left\{  G_{\mu}(t)>r\right\}  }-1_{\left\{  G_{\nu
}(t)>r\right\}  }\right\vert dt=\left\vert F_{\mu}(r)-F_{\nu}(r)\right\vert .
\]
For real $y\geqslant z$, the indicators $1_{\left\{  y>r\right\}  }$ and
$1_{\left\{  z>r\right\}  }$ differ exactly on $[z,y)$, whose length is $y-z$.
So by symmetry we have
\begin{equation}
\left\vert y-z\right\vert =\int_{\mathbb{R}}\left\vert 1_{\left\{
y>r\right\}  }-1_{\left\{  z>r\right\}  }\right\vert dr.
\label{eq:indicators-difference}%
\end{equation}
Using the definition of $\gamma_{G}$ and Tonelli's theorem, we therefore have%
\begin{align*}
\int\left\vert y-z\right\vert d\gamma_{G}(y,z)  &  =\int_{0}^{1}\left\vert
G_{\mu}\left(  t\right)  -G_{\nu}\left(  t\right)  \right\vert dt=\int_{0}%
^{1}\left(  \int_{\mathbb{R}}\left\vert 1_{\left\{  G_{\mu}(t)>r\right\}
}-1_{\left\{  G_{\nu}(t)>r\right\}  }\right\vert dr\right)  dt\\
&  =\int_{\mathbb{R}}\int_{0}^{1}\left\vert 1_{\left\{  G_{\mu}(t)>r\right\}
}-1_{\left\{  G_{\nu}(t)>r\right\}  }\right\vert dtdr=\int_{\mathbb{R}%
}\left\vert F_{\mu}(r)-F_{\nu}(r)\right\vert dr\\
&  =\int_{\mathbb{R}}|\mu((r,\infty))-\nu((r,\infty))|dr.
\end{align*}
The second equality uses $\left(  \text{\ref{eq:indicators-difference}%
}\right)  $. This shows the inequality is attained by $\gamma_{G}$. Note that
the same calculation applies to squared distance. Since $U=F_{\rho}\left(
X\right)  $ is uniform, so with $Z=\left\vert G_{\mu}\left(  U\right)
-G_{\nu}\left(  U\right)  \right\vert $ we have
\[
\mathbb{E}Z=\delta,\text{ }\mathbb{E}Z^{2}=\int_{0}^{1}\left\vert G_{\mu
}(t)-G_{\nu}(t)\right\vert ^{2}dt=\left\Vert T_{\mu}-T_{\nu}\right\Vert
_{L^{2}\left(  \rho\right)  }^{2}.
\]

\textbf{3}. Recall $M_{\mu,\nu,m}$ from $\left(  \ref{eq:M-mu-nu-pair}\right)
.$ The discussion at the beginning of Section \ref{sec:target-profile}\ shows
$\max\{M_{\mu,m}(b),M_{\nu,m}(b)\}$ is continuous in $b$ and coercive. So it
attains its infimum at a center $b_{\ast}$. Let $\eta\in\left\{  \mu
,\nu\right\}  .$ By the quantile pushforward,
\[
\int_{0}^{1}\left\vert G_{\eta}(t)-b_{\ast}\right\vert ^{m}dt=\int\left\vert
y-b_{\ast}\right\vert ^{m}d\eta(y)=\left[  M_{\eta,m}(b_{\ast})\right]  ^{m}.
\]
Write $G_{\mu}-G_{\nu}=(G_{\mu}-b_{\ast})-(G_{\nu}-b_{\ast})$. By Minkowski's
inequality we have
\[
\left\Vert Z\right\Vert _{m}=\left(  \int_{0}^{1}\left\vert G_{\mu}(t)-G_{\nu
}(t)\right\vert ^{m}dt\right)  ^{1/m}\leqslant M_{\mu,m}(b_{\ast})+M_{\nu
,m}(b_{\ast})\leqslant2M_{\mu,\nu,m}.
\]
Set $r_{1}=\left(  m-2\right)  /\left(  m-1\right)  $, $r_{2}=1/\left(
m-1\right)  $. Then $r_{1}+r_{2}=1$, $r_{1}+mr_{2}=2$ and both $r_{1},$
$r_{2}$ are in $(0,1)$. Therefore we can use H\"{o}lder's inequality with
exponents $1/r_{1},$ $1/r_{2}$ to obtain%
\[
\mathbb{E}Z^{2}=\mathbb{E}\left[  Z^{r_{1}}\left(  Z^{m}\right)  ^{r_{2}%
}\right]  \leqslant\left(  \mathbb{E}Z\right)  ^{r_{1}}\left(  \mathbb{E}%
Z^{m}\right)  ^{r_{2}}\leqslant\delta^{(m-2)/(m-1)}(2M_{\mu,\nu,m}%
)^{m/(m-1)}.
\]
Taking square root yields the desired stability.

\textbf{4}. We now prove the sharpness of the exponent by constructing
probability marginals that saturate the exponent. In the following we fix
$\rho=dx$ as uniform on $(0,1)$, so the density $f$ is constant $1.$ For
$0<q<1$, we let $R=q^{-1/m}$ and define the targets%
\[
\mu_{q}=\delta_{0},\text{ \ \ }\nu_{q}=(1-q)\delta_{0}+q\delta_{R}.
\]
The source has $\Vert f\Vert_{p}=1$ for all $p>0$, and $\int_{0}^{1}%
x^{n}dx=1/(n+1)$. Since $qR^{m}=1,$ both targets have bounded centralized
$m$-th moment. Their Brenier maps are%
\[
T_{\mu_{q}}(x)=0,\text{ \ \ }T_{\nu_{q}}(x)=R1_{\left\{  x>1-q\right\}  }.
\]
The $W_{1}$-distance and the squared error are%
\[
\delta=W_{1}(\mu_{q},\nu_{q})=qR=q^{(m-1)/m},
\]%
\[
\Vert T_{\mu_{q}}-T_{\nu_{q}}\Vert_{2}^{2}=qR^{2}=q^{(m-2)/m}=\delta
^{(m-2)/(m-1)}.
\]
For sufficiently small $\delta>0,$ $\Vert T_{\mu_{q}}-T_{\nu_{q}}\Vert
_{2}=\delta^{(m-2)/\left[  2(m-1)\right]  }$ is attained by choosing
$q=\delta^{m/(m-1)}$. Therefore the proof for sharpness is complete.
\end{proof}

\emph{Improvement discussion}. Let $k_{0}$ be the exponent in $\left(
\ref{eq:both-unbounded}\right)  $. Recall that in the proof of Theorem
\ref{thm:both-unbounded}, we already know that the $\alpha$ in $k_{0}$ is in
$\left(  0,1\right)  .$ Hence
\[
\frac{k_{0}}{(m-2)/\left[  2(m-1)\right]  }=\frac{\alpha\eta(m-1)}%
{m+\eta(m-1)}<1.
\]
So the one-dimensional exponent is sharper than the general exponent given by
Theorem \ref{thm:both-unbounded}.

\appendix

\section{Comparisons with Delalande-M\'{e}rigot
\cite{delalande2023quantitative}}

\label{sec:DM-noncompact}

We compare \cite[Section 4]{delalande2023quantitative}. Its Theorem 4.3,
Remark 4.2, and Corollary 4.4 allow noncompact target supports. Write $m$ for
the target moment order called $p$ in Corollary 4.4, to distinguish it from
our source density exponent. All comparisons concern the $L^{2}(\rho)$ norm of
the map difference. Constants are uniform with the stated source and target
parameters fixed. We compare orders, not optimal constants.

\subsection{Corollary 4.4 with moment bounds alone}

Let $\rho=fdx$ be supported in $B\left(  a,R_{X}\right)  ,$ $f\in L^{\infty
}\left(  \mathbb{R}^{d}\right)  $ and the targets $\mu,$ $\nu$ have
$M_{m,\mu,\nu}\leqslant K_{m}$ for some $K_{m}>0$ and $m>2$. See $\left(
\ref{eq:M-mu-nu-pair}\right)  $ for the definion of $M_{\mu,\nu,m}$ and let
$b_{\ast}$attain the infimum of $M_{\mu,\nu,m}$. Let $\eta\in\left\{
\mu,\text{ }\nu\right\}  .$ On $\{\left\vert y-b_{\ast}\right\vert >R\}$,
$\left\vert y-b_{\ast}\right\vert ^{2}\leqslant R^{2-m}\left\vert y-b_{\ast
}\right\vert ^{m}$ gives%
\begin{equation}
\tau_{2,\eta,b_{\ast}}(R)\leqslant K_{m}R^{2-m}. \label{eq:moment-tail-bound}%
\end{equation}
Hence Theorem \ref{thm:squared-target} applies with $s=1,$ $\beta=m-2$,
$J=K_{m}$, and gives for some $C>0,$
\begin{equation}
\Vert T_{\mu}-T_{\nu}\Vert_{2}\leqslant C\delta^{k_{m}}\text{\ where }%
k_{m}=\frac{m-2}{4m-2}. \label{eq:moment-only-bound}%
\end{equation}
Here we only used a bounded containing source ball, an upper density bound and
a target with moment $m>2.$ Convexity, a positive density lower bound, and
potential regularity were not used. The constant $C$ is uniform for fixed $d,$
$\Vert f\Vert_{\infty},$ $R_{X},$ $m,$ $K_{m}$, independently of the target centers.

In \cite[Corollary 4.4]{delalande2023quantitative}, the source instead has
two-sided positive density bounds on a compact convex set, and the target
moment order satisfies $m\geqslant4$, $m>d$. Its map norm exponent is
$m/(6m+16d)$. On this common class the difference of exponents is,%
\[
k_{m}-\frac{m}{6m+16d}=\frac{m^{2}+(8d-5)m-16d}{(2m-1)(6m+16d)}.
\]
The numerator is strictly positive,%
\[
m^{2}+(8d-5)m-16d=(m-4)^{2}+(8d+3)(m-4)+16d-4>0.
\]
Thus the map upper bound is recovered and strictly improved in small-distance
order. Our moment-only bound $\left(  \ref{eq:moment-only-bound}\right)  $
also allows $2<m<4$ and $m\leqslant d$, and drops the two source density bounds.

Corollary 4.4 in the cited paper can be strengthened. Indeed, Proposition 3.3
there (or \cite[Proposition 2.22]{berman2013real}) gives the potential
exponent $\alpha=1-d/m$ when $m>d$. Substituting $q=m/2$ into Remark 4.2
yields
\[
\frac{q-1}{2(q(7-4\alpha)-3)}=\frac{m-2}{2(m(7-4\alpha)-6)}=\frac
{m-2}{6m+8d-12}\geqslant\frac{m}{6m+16d}\text{.}%
\]
The inequality is valid under Corollary 4.4's $m\geqslant4$. So this gives a
stronger refinement. For every $m>\max\left\{  2,d\right\}  $, our new
exponent $k_{m}$ still strictly exceeds this refinement,
\[
k_{m}-\frac{m-2}{6m+8d-12}=\frac{m-2}{4m-2}-\frac{m-2}{6m+8d-12}%
=\frac{(m-2)(2m+8d-10)}{(4m-2)(6m+8d-12)}>0.
\]

\subsection{Theorem 4.3 and Remark 4.2 with potential regularity}

Since this might be useful beyond the comparison, we state it as a theorem.
Recall that our moment order $m$ is $2q$ there.

\begin{theorem}
Suppose now that $\rho=fdx$ is supported on a compact convex body
$\mathcal{X}$, $f$ is bounded $f\leqslant M_{\rho}$ on $\mathcal{X}$ for some
$M_{\rho}>0$, and the targets $\mu,\nu$ have $M_{\mu,\nu,m}\leqslant
K_{m}<\infty$ for some $m\geqslant2$ and $K_{m}>0$. Suppose that both Brenier
potentials $u_{\eta}$, $\eta\in\left\{  \mu,\text{ }\nu\right\}  $ satisfy,
for fixed $H_{\alpha}>0$ and $0<\alpha<1,$%
\begin{equation}
\left\vert u_{\eta}\left(  x\right)  -u_{\eta}\left(  x^{\prime}\right)
\right\vert \leqslant H_{\alpha}\left\vert x-x^{\prime}\right\vert ^{\alpha
},\text{ }\forall x,\text{ }x^{\prime}\in\mathcal{X}.
\label{eq:Holder-potential}%
\end{equation}
Let%
\[
\kappa=\frac{1}{1-\alpha},\text{ }\ell=\max\{m,\kappa\}.
\]
Then for $m>2,$%
\begin{equation}
\Vert T_{\mu}-T_{\nu}\Vert_{2}\leqslant C\delta^{k_{\ell}}\text{ where
}k_{\ell}=\frac{\ell-2}{4\ell-2}. \label{eq:higher-regularity-bound}%
\end{equation}
The stability continues to hold at $m=2$ if $\alpha>1/2.$ The constant $C>0$
depends only on $m,K_{m},\alpha,H_{\alpha}$, the source geometry, and the
density upper bound. No target support bound is imposed.
\end{theorem}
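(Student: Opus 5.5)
The plan is to reduce to Theorem~\ref{thm:squared-target} with $s=1$. The H\"{o}lder regularity of the potentials will be used only to manufacture a better target tail bound than the raw moment bound $\left(\ref{eq:moment-tail-bound}\right)$. The point is that a H\"{o}lder potential on a compact convex body forces its gradient image to have heavy mass only on small sets, giving an effective tail decay of order $\kappa$. Concretely, for $\eta\in\{\mu,\nu\}$ and $R\geqslant1$ I aim to show
\[
\tau_{2,\eta,b}(R)\leqslant JR^{-(\ell-2)}
\]
with $J$ depending only on $m,K_m,\alpha,H_\alpha$, the geometry of $\mathcal{X}$, and $M_\rho$. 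Feeding this into Theorem~\ref{thm:squared-target} with $\beta=\ell-2$, $s=1$ gives the exponent $\beta/(2(2\beta+3))=(\ell-2)/(4\ell-2)=k_\ell$.

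The case $\ell=m$ is exactly the computation $\left(\ref{eq:moment-tail-bound}\right)$, so the work is when $\kappa>m$. Since $\nu=(\nabla u_\nu)_{\#}\rho$, I will write
\[
\tau_{2,\nu,b}(R)=\int_{\{|\nabla u_\nu(x)-b|>R\}}|\nabla u_\nu(x)-b|^2f(x)\,dx.
\]
The key geometric estimate is the following. If $x\in\mathcal{X}$ and $|p|>R$ with $p\in\partial u(x)$, then convexity gives $u(x+te)-u(x)\geqslant t|p|$ along $e=p/|p|$ whenever $x+te\in\mathcal{X}$. Combined with the H\"{o}lder bound this yields $t|p|\leqslant H_\alpha t^\alpha$, i.e.\ $t\leqslant (H_\alpha/|p|)^{\kappa}$. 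So points with large gradient lie within distance $(H_\alpha/R)^{\kappa}$ of $\partial\mathcal{X}$, at least in the outward direction of $p$. Convexity of $\mathcal{X}$, being a body with interior, then bounds the Lebesgue measure of $\{|\nabla u|>R\}$ by $C|\partial\mathcal{X}|(H_\alpha/R)^{\kappa}$, up to an inner-cone constant. Because $f\leqslant M_\rho$, this gives the level-set tail $\eta(|y-b|>R)\lesssim R^{-\kappa}$, with $b$ fixed (say $b=0$ after noting the gradient is bounded on an interior ball, or $b=b_\ast$).

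To pass to the squared tail, I write $\tau_{2}(R)=R^2\eta(|y|>R)+\int_R^\infty 2r\,\eta(|y|>r)\,dr\lesssim R^{2-\kappa}$, using $\kappa>2$; note $\kappa>2$ iff $\alpha>1/2$. Taking the better of this and the moment bound gives $R^{-(\ell-2)}$. The case $m=2$ is then covered whenever $\alpha>1/2$, since then $\ell=\kappa>2$. I also need $\mu,\nu\in\mathcal{P}_2$ and the centers in Theorem~\ref{thm:squared-target} to be common. I will use the single center at which the small-gradient bound holds and absorb the shift via $M_{\mu,\nu,m}\leqslant K_m$.

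The main obstacle is the measure estimate for $\{x\in\mathcal{X}:|\nabla u(x)|>R\}$. The one-directional convexity argument only places $x$ near the boundary in the direction of $p$, not uniformly. I would handle it using the fact that a convex body contains a ball $B(c,r_0)$, so that for $x\in\mathcal{X}$ the cone $\mathrm{conv}(\{x\}\cup B(c,r_0))\subset\mathcal{X}$. For a direction $e$ with $e\cdot(c-x)\geqslant0$ the segment stays inside, and I would first try a two-sided version. Comparing $u$ at $x$ and at $x-te$ and using the subgradient inequality in the direction where the cone gives room produces $\mathrm{dist}(x,\partial\mathcal{X})\lesssim (H_\alpha/R)^{\kappa}$ with a constant depending on $r_0/\mathrm{diam}\,\mathcal{X}$. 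From this, $|\{\mathrm{dist}(\cdot,\partial\mathcal{X})<\epsilon\}|\leqslant \mathcal{H}^{d-1}(\partial\mathcal{X})\,\epsilon$ closes the argument.
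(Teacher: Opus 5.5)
Your route is the paper's route. The H\"older bound plus the subgradient inequality controls $|\nabla u_\eta|$ by a negative power of the distance to $\partial\mathcal{X}$. A boundary-layer volume estimate with $f\le M_\rho$ then gives $\eta(|y|>R)\lesssim R^{-\kappa}$, and the layer-cake formula gives $\tau_{2,\eta,0}(R)\lesssim R^{2-\kappa}$ for $\kappa>2$. The Stability-compact-tail theorem with $s=1$, $\beta=\ell-2$ then yields $k_\ell$, using the moment tail at $b_\ast$ when $\kappa\le m$.

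The one thing to correct is the step you flag as the main obstacle. It is not an obstacle, and your proposed fix would not work.

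\begin{itemize}
\item \emph{The direct argument.} Set $r(x)=\operatorname{dist}(x,\partial\mathcal{X})$. The closed ball $B(x,r(x))$ lies in $\mathcal{X}$, so $x+r(x)e\in\mathcal{X}$ for $e=p/|p|$. Taking $t=r(x)$ in your own directional inequality gives $r(x)|p|\le H_\alpha r(x)^\alpha$, that is, $|p|\le H_\alpha r(x)^{\alpha-1}$. This holds uniformly, with no inner-cone constant, and is exactly the paper's step. Being close to the boundary along the ray in direction $p$ already means being close to the boundary, since $r(x)$ is at most the exit distance in every direction.
\item \emph{Why the cone/two-sided argument fails.} In a direction with $p\cdot e<0$, the subgradient inequality only gives the lower bound $u(x+te)\ge u(x)+t\,p\cdot e$, which says nothing about the size of $|p|$. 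In directions toward the inscribed ball it bounds only the component $p\cdot e$, not $|p|$.
\end{itemize}

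No center reconciliation is needed either, because the constant in that theorem is independent of the common center. Take $b=0$ when $\kappa>m$ and $b=b_\ast$ when $\kappa\le m$.

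Your convex-body layer bound $|\{x\in\mathcal{X}:r(x)<\epsilon\}|\le\mathcal{H}^{d-1}(\partial\mathcal{X})\,\epsilon$ is a valid, slightly more explicit substitute for the paper's Lipschitz-domain constant $K_{\mathcal{X}}$.
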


\begin{remark}
Theorem 4.3 in the cited paper assumes in addition a finite $4$-th moment and
obtains the exponent $1/[2(11-8\alpha)]$. If we do not use the H\"{o}lder
bound and only use its finite $4$-th moment assumption for the targets, then
the moment-only exponent $\left(  \ref{eq:moment-only-bound}\right)  ,$
$k_{4}=1/7,$ dominate the exponent there only when $\alpha<15/16$. The two
exponents equal at $\alpha=15/16$.
\end{remark}

\begin{proof}
\textbf{1}. \emph{The case }$m>2$. If $\kappa\leqslant m$, then $\ell=m.$ We
use the moment bound condition. Since $M_{\mu,\nu,m}\leqslant K_{m}$ implies
that the tail bound $\left(  \ref{eq:moment-tail-bound}\right)  $ holds. Now
using Theorem \ref{thm:squared-target} with $s=1,$ $\beta=\ell-2=m-2$ and
$J=K_{m}$ or simply using $\left(  \ref{eq:moment-only-bound}\right)  ,$ we
obtain the desired stability $\left(  \ref{eq:higher-regularity-bound}\right)
$ for $\kappa\leqslant m.$ If instead $\kappa>m,$ then $\ell=\kappa$. First we
derive a target tail bound from the H\"{o}lder assumption. At an interior
differentiable point $x$, let $y=\nabla u_{\eta}(x)$. If $y\neq0$, the point
$x+r\left(  x\right)  y/\left\vert y\right\vert $ belongs to $\mathcal{X}$.
The subgradient inequality and the H\"{o}lder continuity $\left(
\ref{eq:Holder-potential}\right)  $ give
\[
r\left(  x\right)  \left\vert y\right\vert \leqslant u_{\eta}\left(
x+r\left(  x\right)  y/\left\vert y\right\vert \right)  -u_{\eta}\left(
x\right)  \leqslant H_{\alpha}r\left(  x\right)  ^{\alpha}.
\]
Thus
\begin{equation}
\left\vert y\right\vert \leqslant H_{\alpha}r(x)^{\alpha-1}. \label{eq:y-rx}%
\end{equation}
It also holds for $p=0$. This is the erosion mechanism of \cite[Propositions
4.5--4.6]{delalande2023quantitative}. The calculation here records the
constants needed for our tail estimation. Since a compact convex domain is
Lipschitz, the geometric layer estimate $\left(  \ref{eq:LM-boundary-layer}%
\right)  $ holds with a domain-dependent constant $K_{\mathcal{X}}>0$. The
constant may not be optimal for convex domain, but it is sufficient for our
uses. The layer estimate also gives $\left\vert \partial\mathcal{X}\right\vert
=0$ by letting $t\rightarrow0$, so the preceding argument covers $\rho$-a.e.
$x$. Let $C_{H}=M_{\rho}K_{\mathcal{X}}H_{\alpha}^{\kappa}$. Since $(\nabla
u_{\eta})_{\#}\rho=\eta$, by $\left(  \ref{eq:y-rx}\right)  $ we have for
$R>0$ that
\[
\eta\left(  \left\vert y\right\vert >R\right)  =\rho\left(  \left\vert \nabla
u_{\eta}\right\vert >R\right)  \leqslant\rho\left(  r\left(  x\right)
<\left(  H_{\alpha}/R\right)  ^{\kappa}\right)  \leqslant C_{H}R^{-\kappa}.
\]
Since $m>2,$ so $\kappa>2$. Applying Tonelli's theorem to
\[
|y|^{2}1_{\{|y|>R\}}=R^{2}1_{\{|y|>R\}}+\int_{R}^{\infty}2t1_{\{|y|>t\}}dt
\]
we get
\begin{align}
\tau_{2,\eta,0}(R)  &  =R^{2}\eta\left(  \left\vert y\right\vert >R\right)
+\int_{R}^{\infty}2t\eta\left(  \left\vert y\right\vert >t\right)
dt\label{eq:squared-tail-bound}\\
&  \leqslant C_{H}R^{2-\kappa}+2C_{H}\int_{R}^{\infty}t^{1-\kappa}%
dt=C_{H}\frac{\kappa}{\kappa-2}R^{-(\kappa-2)}.\nonumber
\end{align}
This is a tail bound at the exponent $\kappa-2$. Using Theorem
\ref{thm:squared-target} with $s=1,$ $\beta=\kappa-2=\ell-2$, $J=C_{H}%
\kappa/(\kappa-2)$, we therefore get the stability $\left(
\ref{eq:higher-regularity-bound}\right)  $ for $\kappa>m.$

\textbf{2}. \emph{The case }$m=2$. If $\alpha>1/2$, then $\kappa>2=m$, so the
tail bound $\left(  \ref{eq:squared-tail-bound}\right)  $ we just derived
continues to hold and yields the same stability bound with strictly positive
exponent $k_{\ell}=\left(  \kappa-2\right)  /\left(  4\kappa-2\right)
=(2\alpha-1)/[2(1+\alpha)]>0.$
\end{proof}

\emph{Improvement discussion}. We now check explicitly that $\left(
\ref{eq:higher-regularity-bound}\right)  $ improves the cited Remark 4.2
exponent
\[
k_{\mathrm{DM}}(m,\alpha)=\frac{m-2}{2(m(7-4\alpha)-6)}.
\]
First we compare the case $m>2.$ The denominator of $k_{\mathrm{DM}}$ is
positive, since $7-4\alpha>3$ and $m>2$. If $\kappa\leqslant m$, then
$\ell=m.$ We form the difference%
\[
k_{\ell}-k_{\mathrm{DM}}=\left[  \frac{m-2}{4m-2}-\frac{m-2}{2(m(7-4\alpha
)-6)}\right]  =\left(  m-2\right)  \cdot\frac{2(m(7-4\alpha)-6)-(4m-2)}%
{\left(  4m-2\right)  \left[  2(m(7-4\alpha)-6)\right]  }.
\]
By the definition of $\kappa$ and that $\kappa\leqslant m$ we have
$\alpha\leqslant1-1/m$. So the numerator is positive,
\begin{align*}
2(m(7-4\alpha)-6)-(4m-2)  &  =2[m(5-4\alpha)-5]\\
&  \geqslant2[m(1+4/m)-5]=2(m-1)>0.
\end{align*}
Thus the new exponent $k_{\ell}$ is strictly larger than $k_{\mathrm{DM}}$. If
$\kappa>m$, then $\ell=\kappa.$ Substituting $\alpha=1-1/\kappa$ yields%
\[
k_{\ell}-k_{\mathrm{DM}}=\frac{\kappa-2}{4\kappa-2}-\frac{m-2}{2(3m+4m/\kappa
-6)}=\frac{(\kappa-2)(3m+4m/\kappa-6)-(m-2)(2\kappa-1)}{2(2\kappa
-1)(3m+4m/\kappa-6)}.
\]
Since $m>2,$ the numerator is strictly positive,
\[
(\kappa-2)(3m+4m/\kappa-6)-(m-2)(2\kappa-1)=(m-1)(m-2)+(\kappa-m)(m-2+8/\kappa
)>0.
\]
The common denominator is also strictly positive, since $\kappa>m>2$. This
shows strict improvement in both cases. In particular, under Theorem 4.3's
$4$th-moment assumption, $m=4.$ The equation $\kappa=m=4$ then gives
$\alpha=3/4.$ So our stability exponent is%
\[
\left\{
\begin{array}
[c]{ll}%
1/7, & 0<\alpha\leqslant3/4,\\
(2\alpha-1)/[2(1+\alpha)], & 3/4<\alpha<1.
\end{array}
\right.
\]
In the second line, we have substituted $\kappa=1/(1-\alpha)$ into $k_{\ell
}=k_{\kappa}=(\kappa-2)/(4\kappa-2)$. The two expressions agree at
$\alpha=3/4$. The preceding comparison with $m=4$ proves that they strictly
improve $1/[2(11-8\alpha)]$ throughout $0<\alpha<1$. When $m=2,$ this is
Remark 4.2 at $q=1$, the exponent there is zero. If $\alpha>1/2$, then
$\kappa>2=m$, our exponent $k_{\ell}=\left(  \kappa-2\right)  /\left(
4\kappa-2\right)  =(2\alpha-1)/[2(1+\alpha)]$ is still strictly positive.

\section{Comparisons with Letrouit-M\'{e}rigot \cite{letrouit2026gluing}}

\label{sec:LM-comparison}

We now compare with the Brenier map stability estimates \cite[Theorem 1.4
(1.7), Theorem 1.7 (1.10), Theorem 1.10 (1.13), Theorem 1.11 (1.15) and
Theorem 1.12 (1.17)]{letrouit2026gluing}. As already mentioned in its Remark
1.5, these stabilities are extendible to non-compact targets under additional
moments via the route of \cite[Section 4]{delalande2023quantitative}. For
comparison purpose we only treat compact targets, non-compact targets can be
treated as in Theorem \ref{thm:squared-target} or Theorem
\ref{thm:both-unbounded}. Therefore we assume that throughout this section,
the targets $\mu,$ $\nu$ lie in one fixed common ball $B(b,R_{Y})$, $R_{Y}>0$,
and $\delta=W_{1}(\mu,\nu)$.

\subsection{Source-tail envelopes}

\label{sec:source-envelopes}

For rapidly decaying source density, the annular estimate $\left(
\ref{eq:A-annuli}\right)  $\ with $p=\infty$ retains the linear bound$,$%
\[
\mathfrak{A}_{\rho}\left(  h\right)  \leqslant Bh,\text{ where }B=K_{d,\infty
}^{\mathrm{ann}}W_{p,a}\left(  f\right)
\]
With target measures supported in $B\left(  b,R_{Y}\right)  $, $R_{Y}>0$, the
error bound Theorem \ref{thm:error-bound} therefore gives, for every feasible
$\sigma\in\Pi\left(  \rho,\nu\right)  $,
\[
\mathcal{E}_{\nu}\left(  \sigma\right)  \leqslant4R_{Y}\mathcal{D}_{\nu
}\left(  \sigma\right)  /h+18BR_{Y}^{2}h,\text{ }h>0.
\]
Choosing the minimizing $h$ gives
\begin{equation}
\mathcal{E}_{\nu}\left(  \sigma\right)  \leqslant C_{E}\mathcal{D}_{\nu
}\left(  \sigma\right)  ^{1/2},\text{ where }C_{E}=12\left(  2\right)
^{1/2}B^{1/2}R_{Y}^{3/2}. \label{eq:E-sqrtD}%
\end{equation}

\begin{theorem}
[Gaussian upper envelope]\label{thm:logconcave}Let $\rho=fdx$ and suppose, for
some $a\in\mathbb{R}^{d}$ and $C_{0},$ $\kappa>0$, that
\[
0\leqslant f\left(  x\right)  \leqslant C_{0}e^{-\kappa\left\vert
x-a\right\vert ^{2}/2},\text{ for almost every }x.
\]
Let $\mu,$ $\nu$ be supported in $B(b,R_{Y})$, $R_{Y}>0$. Then, with
$\delta=W_{1}(\mu,\nu),$%
\begin{equation}
\left\Vert T_{\mu}-T_{\nu}\right\Vert _{L^{2}\left(  \rho\right)  }\leqslant
C\delta^{1/4}\left[  \log\left(  c/\delta\right)  \right]  ^{1/8}.
\label{eq:logconcave-map}%
\end{equation}
Constants $C>0$, $c\geqslant e$ depend only on $d,$ $C_{0},$ $\kappa,$ $R_{Y}%
$, not on the center $a$ or on the two target measures.
\end{theorem}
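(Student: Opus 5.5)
We follow the repair route of Section~\ref{sec:two-map}. Fix a $W_1$-optimal coupling $\gamma\in\Pi(\mu,\nu)$ and form the repaired plan $\sigma_{T_\mu,\gamma}$ of Definition~\ref{def:repaired-plan}. By $\left(\ref{eq:map-graph}\right)$,
\[
\Vert T_\mu-T_\nu\Vert_{L^2(\rho)}^2\leqslant 2\int|y-z|^2d\gamma+2\mathcal{E}_\nu(\sigma_{T_\mu,\gamma}).
\]
The first term is at most $4R_Y\delta$, since the targets share the ball $B(b,R_Y)$.

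For the second term we use the annular source estimate. Proposition~\ref{prop:annuli} with $p=\infty$ ($s=1$) requires $W_{\infty,a}(f)=\Vert f\Vert_{L^\infty(A_0)}+\sum_k 2^{k(d-1)}\Vert f\Vert_{L^\infty(A_k)}<\infty$. The envelope gives $\Vert f\Vert_{L^\infty(A_k)}\leqslant C_0e^{-\kappa 4^k/2}$, so the series converges to a constant depending only on $d,C_0,\kappa$ and not on $a$. Hence $\mathfrak{A}_\rho(h)\leqslant Bh$, and $\left(\ref{eq:E-sqrtD}\right)$ yields $\mathcal{E}_\nu(\sigma_{T_\mu,\gamma})\leqslant C_E\mathcal{D}_\nu(\sigma_{T_\mu,\gamma})^{1/2}$. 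Note that $\rho\in\mathcal{P}_2$ and has a density, so Assumption~\ref{ass:brenier} holds.

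It remains to control $\mathcal{D}_\nu(\sigma_{T_\mu,\gamma})$ via Proposition~\ref{prop:D-1-tail}:
\[
\mathcal{D}_\nu\leqslant 2\inf_S\{S\delta+2R_Y\tau_{1,\rho,a}(S)\}.
\]
The key computation is the Gaussian tail $\tau_{1,\rho,a}(S)\leqslant C_0\int_{|w|>S}|w|e^{-\kappa|w|^2/2}dw\leqslant C' S^{d}e^{-\kappa S^2/2}$ for $S\geqslant1$, with $C'$ depending on $d,C_0,\kappa$. Choose $S=\sqrt{(2/\kappa)\log(c/\delta)}$ for small $\delta$, enlarging the constant $c\geqslant e$ if needed. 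Then $e^{-\kappa S^2/2}=\delta/c$, and the tail term becomes $O(\delta[\log(c/\delta)]^{d/2})$. This is not dominated by $S\delta\sim\delta\sqrt{\log}$ when $d>1$.

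This is the main obstacle: the polynomial prefactor must be absorbed. We will take $S=\sqrt{(2/\kappa)(\log(c/\delta)+\tfrac{d}{2}\log\log(c/\delta))}$, or equivalently absorb $S^{d}e^{-\kappa S^2/4}$ into a constant and use exponent $\kappa/4$ in place of $\kappa/2$. Either choice gives $\mathcal{D}_\nu\leqslant C''\delta[\log(c/\delta)]^{1/2}$.

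Combining the pieces,
\[
\Vert T_\mu-T_\nu\Vert^2\leqslant 4R_Y\delta+2C_E(C'')^{1/2}\delta^{1/2}[\log(c/\delta)]^{1/4}.
\]
For $\delta\leqslant\delta_0$ the second term dominates, and taking square roots gives $\left(\ref{eq:logconcave-map}\right)$. For $\delta>\delta_0$ we use the trivial bound $\Vert T_\mu-T_\nu\Vert\leqslant 2R_Y$. Since $\delta\leqslant 2R_Y$ always, the right side $\delta^{1/4}[\log(c/\delta)]^{1/8}$ is bounded below on $[\delta_0,2R_Y]$ once $c>2R_Y e$, so the bound still holds there after enlarging $C$. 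All constants depend on $d,C_0,\kappa,R_Y$ only, because the annular sum and tail integrals are translation invariant in $a$.
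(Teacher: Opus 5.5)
Your proposal is correct and follows essentially the same route as the paper's proof. It repairs through a $W_1$-optimal coupling, uses the annular bound with $p=\infty$ to get $\mathcal{E}_\nu\leqslant C_E\mathcal{D}_\nu^{1/2}$ via $\left(\ref{eq:E-sqrtD}\right)$, and applies Proposition~\ref{prop:D-1-tail} with a Gaussian tail. Your second option for the polynomial prefactor is exactly the paper's choice: split $e^{-\kappa r^2/2}=e^{-\kappa r^2/4}e^{-\kappa r^2/4}$ to get $\tau_{1,\rho,a}(S)\leqslant C_1e^{-\kappa S^2/4}$, then take $S=\sqrt{(4/\kappa)\log(c/\delta)}$. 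The large-$\delta$ range is handled in the same way, using $\delta\leqslant 2R_Y$ and $c>2R_Ye$.
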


\begin{proof}
The envelope implies finite moments of every order. For the annuli in
Proposition~\ref{prop:annuli} with $p=\infty$,
\[
W_{\infty,a}\left(  f\right)  \leqslant C_{0}\left(  1+\sum_{k\geqslant
1}2^{k\left(  d-1\right)  }e^{-\kappa4^{k}/2}\right)  \triangleq W<\infty.
\]
The sum converges by ratio test. Thus the preceding estimate $\left(
\ref{eq:E-sqrtD}\right)  $ applies with $B=K_{d,\infty}^{\mathrm{ann}}W$.
First compute, for $S\geqslant0$,
\begin{align*}
\tau_{1,\rho,a}\left(  S\right)   &  =\int_{|u|>S}|u|f(a+u)du\leqslant
C_{0}d\omega_{d}\int_{S}^{\infty}r^{d}e^{-\kappa r^{2}/2}dr\\
&  \leqslant C_{0}d\omega_{d}e^{-\kappa S^{2}/4}\int_{S}^{\infty}%
r^{d}e^{-\kappa r^{2}/4}dr\text{ (write }e^{-\kappa r^{2}/2}=e^{-\kappa
r^{2}/4}\cdot e^{-\kappa r^{2}/4}\text{)}\\
&  \leqslant C_{0}d\omega_{d}e^{-\kappa S^{2}/4}\int_{0}^{\infty}%
r^{d}e^{-\kappa r^{2}/4}dr\text{ }\\
&  =C_{0}d\omega_{d}e^{-\kappa S^{2}/4}\left(  1/2\right)  (4/\kappa
)^{(d+1)/2}\int_{0}^{\infty}w^{(d-1)/2}e^{-w}dw\text{ (let }w=\kappa
r^{2}/4\text{)}\\
&  =C_{0}d\omega_{d}e^{-\kappa S^{2}/4}\left(  1/2\right)  (4/\kappa
)^{(d+1)/2}\Gamma((d+1)/2).
\end{align*}
Consequently
\[
\tau_{1,\rho,a}\left(  S\right)  \leqslant C_{1}e^{-c_{1}S^{2}},
\]
where $c_{1}=\kappa/4,$ $C_{1}=C_{0}d\omega_{d}\left(  1/2\right)
(4/\kappa)^{(d+1)/2}\Gamma((d+1)/2).$ Let now $\gamma$ be optimal for
$W_{1}\left(  \mu,\nu\right)  $ and construct $\sigma_{T_{\mu},\gamma}$ as in
Definition~\ref{def:repaired-plan}. For $0<\delta\leqslant1,$ let
\[
S=\left(  \ell/c_{1}\right)  ^{1/2},\text{ }\ell=\log\left(  c/\delta\right)
,\text{ }c=\max\left\{  e\max\left\{  1,2R_{Y}\right\}  ,C_{1}\right\}  .
\]
Since $\delta$ cannot exceed $2R_{Y},$ we have $\ell\geqslant1$ and
\[
\tau_{1,\rho,a}\left(  S\right)  \leqslant C_{1}e^{-\ell}=(C_{1}%
/c)\delta\leqslant\delta.
\]
Proposition~\ref{prop:D-1-tail} therefore yields
\[
\mathcal{D}_{\nu}(\sigma_{T_{\mu},\gamma})\leqslant2\delta(S+2R_{Y}%
)=2\delta(c_{1}^{-1/2}\ell^{1/2}+2R_{Y})\leqslant C_{D}\delta\ell^{1/2},
\]
where $C_{D}=2(c_{1}^{-1/2}+2R_{Y}).$ Since%
\[
\int\left\vert y-z\right\vert ^{2}d\gamma\leqslant2R_{Y}\int\left\vert
y-z\right\vert d\gamma=2R_{Y}\delta,
\]
$\left(  \ref{eq:map-graph}\right)  $ and~$\left(  \ref{eq:E-sqrtD}\right)  $
imply
\[
\Vert T_{\mu}-T_{\nu}\Vert_{L^{2}(\rho)}^{2}\leqslant4R_{Y}\delta+2C_{E}%
C_{D}^{1/2}\delta^{1/2}\ell^{1/4}\leqslant(4R_{Y}+2C_{E}C_{D}^{1/2}%
)\delta^{1/2}\ell^{1/4}.
\]
Taking square roots yields%
\[
\Vert T_{\mu}-T_{\nu}\Vert_{L^{2}(\rho)}\leqslant(4R_{Y}+2C_{E}C_{D}%
^{1/2})^{1/2}\delta^{1/4}\ell^{1/8}.
\]
For $\delta=0$, $\left(  \ref{eq:logconcave-map}\right)  $ is trivial. For
$1<\delta\leqslant2R_{Y}$, the chosen $c$ still gives $\ell\geqslant1$. So%
\[
\left\Vert T_{\mu}-T_{\nu}\right\Vert _{L^{2}(\rho)}\leqslant2R_{Y}%
\leqslant2R_{Y}\delta^{1/4}\ell^{1/8}.
\]
Thus choosing $C=\max\{(4R_{Y}+2C_{E}C_{D}^{1/2})^{1/2},\,2R_{Y}\}$ proves
$\left(  \ref{eq:logconcave-map}\right)  $.
\end{proof}

\begin{corollary}
[Bounded perturbations of strongly log-concave densities]%
\label{cor:strong-logconcave}Let $\rho=fdx$ and suppose that the density
$f=e^{-U-F}$ with $U\in C^{2}\left(  \mathbb{R}^{d}\right)  $, $D^{2}%
U\geqslant\kappa I_{d}$, $\kappa>0$ and $F\in L^{\infty}.$ Let $\mu,$ $\nu$ be
supported in $B(b,R_{Y})$. Then $\left(  \ref{eq:logconcave-map}\right)  $
holds with $C_{0}=e^{-U\left(  a\right)  +\Vert F\Vert_{\infty}}$, where $a$
is the unique minimizer of $U$. No upper Hessian bound is required.
\end{corollary}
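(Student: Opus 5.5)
The plan is to reduce the corollary to Theorem~\ref{thm:logconcave}. That theorem needs only an upper Gaussian envelope $f(x)\leqslant C_0 e^{-\kappa|x-a|^2/2}$, so it suffices to produce one. Everything then follows directly, because the proof of Theorem~\ref{thm:logconcave} used only this envelope. It used it to bound $W_{\infty,a}(f)$ in Proposition~\ref{prop:annuli} and to bound the first-order tail $\tau_{1,\rho,a}$ in Proposition~\ref{prop:D-1-tail}. In particular, no Hessian upper bound, convexity of support or density lower bound is needed.

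\emph{Step 1: a minimizer exists and is unique.} Since $U\in C^{2}$ with $D^{2}U\geqslant\kappa I_{d}$, $U$ is $\kappa$-strongly convex. Taylor's formula with integral remainder at $0$ gives
\[
U(x)\geqslant U(0)+\nabla U(0)\cdot x+\tfrac{\kappa}{2}|x|^{2}.
\]
Hence $U$ is coercive and continuous, so a minimizer exists. Strict convexity makes the minimizer $a$ unique, and it satisfies $\nabla U(a)=0$.

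\emph{Step 2: the Gaussian envelope.} Apply the same Taylor estimate at $a$:
\[
U(x)\geqslant U(a)+\nabla U(a)\cdot(x-a)+\tfrac{\kappa}{2}|x-a|^{2}=U(a)+\tfrac{\kappa}{2}|x-a|^{2}.
\]
Since $-F\leqslant\Vert F\Vert_{\infty}$ almost everywhere, we get
\[
f(x)=e^{-U(x)-F(x)}\leqslant e^{-U(a)+\Vert F\Vert_{\infty}}e^{-\kappa|x-a|^{2}/2}\quad\text{for a.e. } x.
\]
This is exactly the hypothesis of Theorem~\ref{thm:logconcave} with $C_{0}=e^{-U(a)+\Vert F\Vert_{\infty}}$. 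Note that $f\geqslant0$ automatically.

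\emph{Step 3: conclude.} Because $\rho$ is a probability measure with a density, Brenier's theorem gives Assumption~\ref{ass:brenier} for the compactly supported targets. Invoking Theorem~\ref{thm:logconcave} then yields $\left(\ref{eq:logconcave-map}\right)$, with constants depending only on $d$, $C_{0}$, $\kappa$ and $R_{Y}$.

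There is no real obstacle. The only point needing care is that the Taylor bound must use only the \emph{lower} Hessian bound, which is why no upper Hessian bound is required.
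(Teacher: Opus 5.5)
Your proposal is correct and follows the paper's own proof. You obtain the Gaussian envelope from the lower Hessian bound via the second-order Taylor estimate at the minimizer $a$ (where $\nabla U(a)=0$), absorb $F$ through $-F\leqslant\Vert F\Vert_{\infty}$, and then invoke Theorem~\ref{thm:logconcave}. Your explicit check that the minimizer exists, is unique and satisfies $\nabla U(a)=0$ supplies a step the paper leaves implicit.
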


\begin{proof}
The lower Hessian bound implies, for every $x$,
\[
U(x)\geqslant U(z)+\nabla U(z)\cdot(x-z)+\frac{\kappa}{2}|x-z|^{2},
\]
where $z$ is the unique minimizer of $U$. Using $-F(x)\leqslant\Vert
F\Vert_{\infty}$, we obtain
\[
f(x)=e^{-U(x)-F(x)}\leqslant e^{-U(a)+\Vert F\Vert_{\infty}}e^{-\kappa
|x-a|^{2}/2},\text{ for almost every }x.
\]
This is the Gaussian upper envelope with the stated $C_{0}$, therefore
Theorem~\ref{thm:logconcave} applies.
\end{proof}

\emph{Improvement discussion}. Under these strong log-concavity and bounded
perturbation assumptions, \cite[Theorem 1.4 (1.7)]{letrouit2026gluing}
additionally assumes $D^{2}U\leqslant\kappa^{\prime}I_{d}$ for its map
estimate and obtains
\[
q_{\mathrm{LM}}=\frac{\kappa}{2\kappa^{\prime}+7\kappa}\leqslant\frac{1}{9}.
\]
The accompanying refinement there also allows any positive exponent
$q<\kappa/\left(  2\kappa^{\prime}+6\kappa\right)  \leqslant1/8$. For either
choice, $q<1/4$. Thus $\left(  \ref{eq:logconcave-map}\right)  $ improves
these map exponents in the small $\delta$ range.

\begin{theorem}
[Polynomial upper envelope]\label{thm:cauchy}Let $\rho=fdx$ and suppose for
some $a\in\mathbb{R}^{d}$, $\beta>d+2,$ $C_{0}>0$ that%
\[
0\leqslant f(x)\leqslant C_{0}(1+|x-a|^{2})^{-\beta/2},\text{ for almost every
}x.
\]
Let $\mu,$ $\nu$ be supported in $B(b,R_{Y})$, $R_{Y}>0$. Then with
$\delta=W_{1}(\mu,\nu)$,%
\begin{equation}
\left\Vert T_{\mu}-T_{\nu}\right\Vert _{L^{2}(\rho)}\leqslant C\delta^{\left(
\beta-d-1\right)  /\left(  4\left(  \beta-d\right)  \right)  }.
\label{eq:cauchy-map}%
\end{equation}
The constant depends only on $d,$ $\beta,$ $C_{0},$ $R_{Y}$. No density lower
bound is assumed.
\end{theorem}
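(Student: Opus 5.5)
We follow the proof of Theorem~\ref{thm:logconcave} line by line; only the source tail changes. The argument has three inputs: a linear source profile, a polynomial bound on the source tail, and a balance of the repaired objective gap.

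\textbf{Source profile.} First we check that the annular weight in Proposition~\ref{prop:annuli} with $p=\infty$ is finite. On $A_k$ the envelope gives $\Vert f\Vert_{L^\infty(A_k)}\leqslant C_0 2^{-k\beta}$. Hence
\[
W_{\infty,a}(f)\leqslant C_0\Bigl(1+\sum_{k\geqslant1}2^{k(d-1)}2^{-k\beta}\Bigr)<\infty,
\]
which converges because $\beta>d-1$. So $\mathfrak{A}_\rho(h)\leqslant Bh$ with $B=K^{\mathrm{ann}}_{d,\infty}W$. Since the targets lie in $B(b,R_Y)$, the estimate $\left(\ref{eq:E-sqrtD}\right)$ then gives
\[
\mathcal{E}_\nu(\sigma)\leqslant C_E\mathcal{D}_\nu(\sigma)^{1/2}
\]
for every feasible plan, with $C_E$ depending only on $d,\beta,C_0,R_Y$.

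\textbf{Source tail.} In polar coordinates,
\[
\tau_{1,\rho,a}(S)\leqslant C_0 d\omega_d\int_S^\infty r^{d}(1+r^2)^{-\beta/2}\,dr\leqslant C_1 S^{d+1-\beta},
\]
which is finite since $\beta>d+1$. The constant $C_1$ depends only on $d,\beta,C_0$, and the bound holds for all $S>0$.

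\textbf{Repaired objective gap.} Take $\gamma$ optimal for $W_1(\mu,\nu)$ and form the repaired plan $\sigma_{T_\mu,\gamma}$. Proposition~\ref{prop:D-1-tail} gives
\[
\mathcal{D}_\nu(\sigma_{T_\mu,\gamma})\leqslant 2\bigl(S\delta+2R_YC_1S^{d+1-\beta}\bigr).
\]
Balancing the two terms with $S=\delta^{-1/(\beta-d)}$ gives $\mathcal{D}_\nu\leqslant C_D\delta^{(\beta-d-1)/(\beta-d)}$. We also have $\int|y-z|^2\,d\gamma\leqslant 2R_Y\delta$. Inserting these into $\left(\ref{eq:map-graph}\right)$ and the estimate above yields
\[
\Vert T_\mu-T_\nu\Vert_{L^2(\rho)}^2\leqslant 4R_Y\delta+2C_EC_D^{1/2}\delta^{(\beta-d-1)/(2(\beta-d))}.
\]
For $\delta\leqslant1$ the second exponent is below $1$, so the second term dominates; taking square roots gives the exponent $(\beta-d-1)/(4(\beta-d))$. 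For $\delta>1$, the trivial bound $\Vert T_\mu-T_\nu\Vert\leqslant 2R_Y\leqslant 2R_Y\delta^{(\beta-d-1)/(4(\beta-d))}$ suffices. The case $\delta=0$ follows from uniqueness.

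\textbf{Brenier setting.} The pair must satisfy Assumption~\ref{ass:brenier}. The envelope with $\beta>d+2$ gives $\int|x|^2\,d\rho<\infty$, so $\rho\in\mathcal{P}_2$, and $\rho$ has a density. This is where $\beta>d+2$ enters, beyond the $\beta>d+1$ needed for the tail.

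\textbf{Main obstacle.} The main point is the choice of route. Using the $n$-th moment route of Proposition~\ref{prop:W1-nth-compact} would lose exponent: with $n<\beta-d$ it gives $\delta^{1-1/n}$, which is worse than the direct tail balance. The annular profile is also essential, since it keeps the source profile linear in $h$ rather than of order $h^\eta$.
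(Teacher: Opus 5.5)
Your proposal is correct and follows the paper's proof essentially step for step. You use the annular profile with $p=\infty$ to get $\mathcal{E}_\nu\leqslant C_E\mathcal{D}_\nu^{1/2}$, the polynomial tail $\tau_{1,\rho,a}(S)\lesssim S^{-(\beta-d-1)}$, the choice $S=\delta^{-1/(\beta-d)}$ in the $W_1$-repair tail bound, and then the map-graph inequality together with the trivial bound for $\delta>1$. The only differences are cosmetic: you state the tail bound for all $S>0$ rather than $S\geqslant1$ (valid since $(1+r^2)^{-\beta/2}\leqslant r^{-\beta}$), and you make explicit that $\beta>d+2$ is used only to place $\rho$ in $\mathcal{P}_2$.
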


\begin{proof}
Write $t=\beta-d>2$. The envelope implies $\rho\in\mathcal{P}_{2}$ and
\[
W_{\infty,a}(f)\leqslant C_{0}\left(  1+\sum_{k\geqslant1}2^{k(d-1-\beta
)}\right)  \triangleq W.
\]
The geometric series converges, so $\left(  \ref{eq:E-sqrtD}\right)  $ applies
with $B=K_{d,\infty}^{\mathrm{ann}}W$. For $S\geqslant1$,
\begin{align*}
\tau_{1,\rho,a}\left(  S\right)   &  =\int_{|u|>S}|u|f(a+u)\,du\leqslant
C_{0}d\omega_{d}\int_{S}^{\infty}r^{d}(1+r^{2})^{-\beta/2}\,dr\\
&  \leqslant C_{0}d\omega_{d}\int_{S}^{\infty}r^{d-\beta}\,dr=C_{0}d\omega
_{d}\lim_{R\rightarrow\infty}\frac{R^{d-\beta+1}-S^{d-\beta+1}}{d-\beta+1}\\
&  =\frac{C_{0}d\omega_{d}}{\beta-d-1}S^{-(\beta-d-1)}=C_{\tau}S^{-(t-1)}.
\end{align*}
where $C_{\tau}=C_{0}d\omega_{d}\left(  t-1\right)  ^{-1}$. Let $\gamma$ be
optimal for $W_{1}\left(  \mu,\nu\right)  $ and construct $\sigma_{T_{\mu
},\gamma}$ as in Definition~\ref{def:repaired-plan}. For $0<\delta\leqslant1$,
choose $S=\delta^{-1/t}\geqslant1$. Then
\[
\delta S=\delta^{(t-1)/t},\text{ }S^{-(t-1)}=\delta^{(t-1)/t}.
\]
Thus Proposition~\ref{prop:D-1-tail} gives
\[
\mathcal{D}_{\nu}(\sigma_{T_{\mu},\gamma})\leqslant2\{\delta S+2R_{Y}C_{\tau
}S^{-(t-1)}\}\leqslant C_{D}\delta^{(t-1)/t},
\]
where $C_{D}=2(1+2R_{Y}C_{\tau}).$ Using $\left(  \ref{eq:map-graph}\right)  $
and $\left(  \ref{eq:E-sqrtD}\right)  $ gives
\[
\Vert T_{\mu}-T_{\nu}\Vert_{L^{2}(\rho)}^{2}\leqslant4R_{Y}\delta+2C_{E}%
C_{D}^{1/2}\delta^{(t-1)/(2t)}\leqslant(4R_{Y}+2C_{E}C_{D}^{1/2}%
)\delta^{(t-1)/(2t)}.
\]
Here $0<(t-1)/(2t)<1$ and $\delta\leqslant1$. Taking square roots gives the
exponent
\[
\frac{t-1}{4t}=\frac{\beta-d-1}{4(\beta-d)}.
\]
At $\delta=0$, uniqueness gives equality of the maps. For $\delta>1$, use
$\Vert T_{\mu}-T_{\nu}\Vert_{L^{2}(\rho)}\leqslant2R_{Y}\leqslant2R_{Y}\delta
$. Therefore $C=\max\{(4R_{Y}+2C_{E}C_{D}^{1/2})^{1/2},2R_{Y}\}$ works for
$\left(  \ref{eq:cauchy-map}\right)  $.
\end{proof}

\emph{Improvement discussion}. Under two-sided comparability with
$(1+\left\vert x\right\vert ^{2})^{-\beta/2}$, \cite[Theorem 1.12
(1.17)]{letrouit2026gluing} gives the compact target map exponent $\left(
\beta-d-2\right)  /\left(  8\beta-2d-4\right)  $. For $t=\beta-d>2$, the
improvement in $\left(  \ref{eq:cauchy-map}\right)  $ is strict,
\[
\frac{t-1}{4t}-\frac{t-2}{8t+6d-4}=\frac{4t^{2}+\left(  6d-4\right)  \left(
t-1\right)  }{4t\left(  8t+6d-4\right)  }>0.
\]
Our theorem also removes the density lower bound.

\subsection{John domains}

\label{sec:LM-John}

In \cite[Theorem 1.7]{letrouit2026gluing}, the source $\rho$ has a density $f$
bounded above and below by positive constants on a bounded John domain
$\mathcal{X}$. The map stability conclusion there additionally assumes
rectifiable boundary of finite $\mathcal{H}^{d-1}$ measure and has
$L^{2}\left(  \rho\right)  $-norm exponent $1/6$. For this setting, the
small-set condition is satisfied, since for every Borel $E$, it holds%
\[
\rho(E)=\int_{E\cap\mathcal{X}}fdx\leqslant\Vert f\Vert_{\infty}\left\vert
E\right\vert .
\]
So Theorem~\ref{thm:compact-maps} applies with $s=1,$ $C_{V}=\Vert
f\Vert_{\infty}$ and gives
\begin{equation}
\left\Vert T_{\mu}-T_{\nu}\right\Vert _{2}\leqslant C\delta^{1/4}%
.\label{eq:LM-John-improved}%
\end{equation}
Thus the map estimate \cite[Theorem 1.7 (1.10)]{letrouit2026gluing} is
strictly improved in the small $\delta$ order. Neither the source density
lower bound, the John domain condition, connectedness, nor boundary
rectifiability is used in $\left(  \ref{eq:LM-John-improved}\right)  $, only a
ball containing the source and an upper density bound are used. In particular
the same comparison applies to the finite union extension in \cite[Section
4.3]{letrouit2026gluing}. Note that the improvement in this setting has
already been made recently in \cite[Theorem 3.1]{merigot2026sharp} as remarked
in Theorem \ref{thm:compact-maps}.

\subsection{Boundary powers}

\label{sec:LM-boundary}

To avoid conflicting notation, we write $\zeta$ for the boundary exponent
called $\delta$ in \cite[Theorem 1.10 (1.13)]{letrouit2026gluing}, while
keeping $\delta$ for $W_{1}(\mu,\nu)$ here. Let $\mathcal{X}$ be a bounded
Lipschitz domain, $r(x)=\operatorname{dist}(x,\partial\mathcal{X})$, and
suppose the probability density, extended by zero outside $\mathcal{X}$,
satisfies for $\zeta>-1$,%
\begin{equation}
c_{1}r(x)^{\zeta}\leqslant f(x)\leqslant c_{2}r(x)^{\zeta}\text{, }%
x\in\mathcal{X}. \label{eq:boundary-powers}%
\end{equation}
Note that only the upper bound will be used below. The map exponent in
\cite[Theorem 1.10]{letrouit2026gluing} is%
\begin{equation}
q_{\mathrm{LM}}(\zeta)=\left\{
\begin{array}
[c]{ll}%
(1+\zeta)/6, & -1<\zeta\leqslant0,\\
(2+\zeta)/[12(1+\zeta)], & \zeta\geqslant0.
\end{array}
\right.  \label{eq:LM-boundary-old}%
\end{equation}
The two expressions agree at zero.

\emph{Nonnegative boundary powers. }Assume $\zeta\geqslant0$. Let
$D_{\mathcal{X}}=\operatorname{diam}(\mathcal{X})>0$. Since $r(x)\leqslant
D_{\mathcal{X}},$ the density is bounded by $c_{2}D_{\mathcal{X}}^{\zeta}$.
$\left(  \ref{eq:LM-John-improved}\right)  $ therefore applies without using
the lower bound or the Lipschitz condition. The exact exponent difference is
\begin{equation}
\frac{1}{4}-\frac{2+\zeta}{12(1+\zeta)}=\frac{1+2\zeta}{12(1+\zeta)}>0.
\label{eq:LM-boundary-positive-difference}%
\end{equation}

\emph{Negative boundary powers and the endpoint small-set estimate}. Assume
$-1<\zeta<0$. Recall the boundary layer geometric estimate from \cite[3.2.37]%
{federer2014geometric} or \cite[Theorem 2.106]{ambrosio2000functions}: there
are $t_{0}>0$ and $K_{\mathcal{X}}>0$ depending only on the domain, such that
for $0<t\leqslant t_{0},$%
\begin{equation}
V(t)\triangleq\left\vert \left\{  x\in\mathcal{X}:r\left(  x\right)
<t\right\}  \right\vert \leqslant K_{\mathcal{X}}t.
\label{eq:LM-boundary-layer}%
\end{equation}
After replacing $K_{\mathcal{X}}$ with a larger constant, e.g., $\max\left\{
K_{\mathcal{X}},\left\vert \mathcal{X}\right\vert /t_{0}\right\}  ,$ we may
assume that the estimate holds for all $t>0$. For $0<r<t$, the identity%
\[
r^{\zeta}=t^{\zeta}+(-\zeta)\int_{r}^{t}u^{\zeta-1}\,du
\]
has nonnegative terms on the right. Since $r(x)>0$ for $x\in\mathcal{X}$,
multiplying the identity by the layer indicator gives the pointwise identity
on $\mathcal{X}$
\[
1_{\{r(x)<t\}}r(x)^{\zeta}=t^{\zeta}1_{\{r(x)<t\}}+(-\zeta)\int_{0}%
^{t}1_{\{r(x)<u\}}u^{\zeta-1}\,du.
\]
Integrate the two terms over $\mathcal{X}$. Tonelli's theorem allows the
interchange of integtation in the second term
\[
\int_{\mathcal{X}}\int_{0}^{t}1_{\{r(x)<u\}}u^{\zeta-1}dudx=\int_{0}%
^{t}u^{\zeta-1}\left(  \int_{\mathcal{X}}1_{\{r(x)<u\}}dx\right)  du=\int%
_{0}^{t}u^{\zeta-1}V(u)du.
\]
The first term integrates to $t^{\zeta}V(t)$. $\left(
\ref{eq:LM-boundary-layer}\right)  $ consequently gives
\begin{align}
\int_{\{r<t\}}r(x)^{\zeta}dx  &  =t^{\zeta}V(t)+(-\zeta)\int_{0}^{t}%
u^{\zeta-1}V(u)du\label{eq:LM-weighted-layer}\\
&  \leqslant K_{\mathcal{X}}t^{1+\zeta}+\frac{-\zeta}{1+\zeta}K_{\mathcal{X}%
}t^{1+\zeta}=\frac{K_{\mathcal{X}}}{1+\zeta}t^{1+\zeta}.\nonumber
\end{align}
The integral at $t=0$ is finite because $1+\zeta>0$. Fix a Borel set $E.$ If
$\left\vert E\right\vert >0$, choose $t=\left\vert E\right\vert
/K_{\mathcal{X}}$. Split $E\cap\mathcal{X}$ at distance $t$ from the boundary.
On $\{r\geqslant t\}$, $r^{\zeta}\leqslant t^{\zeta}$. On $\{r<t\},$ $\left(
\ref{eq:LM-weighted-layer}\right)  $ applies. Hence%
\begin{equation}
\rho(E)\leqslant c_{2}\left\{  \frac{K_{\mathcal{X}}}{1+\zeta}t^{1+\zeta
}+t^{\zeta}\left\vert E\right\vert \right\}  =c_{2}\frac{2+\zeta}{1+\zeta
}K_{\mathcal{X}}^{-\zeta}\left\vert E\right\vert ^{1+\zeta}.
\label{eq:LM-boundary-small-set}%
\end{equation}
For $\left\vert E\right\vert =0$, absolute continuity gives $\rho(E)=0$. Thus
$\left(  \ref{eq:small-set-density}\right)  $ holds with $0<s=1+\zeta<1$ and
$C_{V}=c_{2}\left(  2+\zeta\right)  K_{\mathcal{X}}^{-\zeta}/\left(
1+\zeta\right)  $. So Theorem~\ref{thm:compact-maps} gives,%
\begin{equation}
\Vert T_{\mu}-T_{\nu}\Vert_{2}\leqslant C\delta^{(1+\zeta)/[2(2+\zeta
)]},\text{ }-1<\zeta<0. \label{eq:LM-boundary-improved}%
\end{equation}
This exponent exceeds $q_{\mathrm{LM}}$ in $\left(  \ref{eq:LM-boundary-old}%
\right)  $, since
\[
\frac{1+\zeta}{2(2+\zeta)}-\frac{1+\zeta}{6}=\frac{(1+\zeta)(1-\zeta
)}{6(2+\zeta)}>0.
\]
The constant $C$ of $\left(  \ref{eq:LM-boundary-improved}\right)  $ are
controlled by the upper density coefficient, $\zeta$, $\mathcal{X}$, $d$ and
the target radius $R_{Y}$. The lower bound of $\left(
\ref{eq:boundary-powers}\right)  $ is not used. We may also replace Lipschitz
geometry by the explicit layer estimate $\left(  \ref{eq:LM-boundary-layer}%
\right)  $.

\subsection{Spherical uniform density}

\label{sec:LM-spherical}

The source in \cite[Theorem 1.11 (1.15)]{letrouit2026gluing} is%
\begin{equation}
f(x)=c_{d}|x|^{1-d}1_{\{0<|x|\leqslant1\}}, \label{eq:LM-spherical-density}%
\end{equation}
where $c_{d}=(d\omega_{d})^{-1}.$ By polar coordinates, for $0\leqslant
r\leqslant1,$%
\begin{equation}
\int_{B(0,r)}f(x)dx=c_{d}d\omega_{d}\int_{0}^{r}t^{1-d}t^{d-1}dt=r.
\label{eq:LM-f-spherical-integral}%
\end{equation}
The exponent of \cite[Theorem 1.11 (1.15)]{letrouit2026gluing} is $1/(6d)$,
which depends on the dimension. We provide two versions of improvement of the
exponent, one depends on the dimension and the other is independent of the dimension.

$L^{p}$\emph{ bound alone gives improved dimension-dependent exponent. }For
$d\geqslant2$, using polar integration we have
\[
\Vert f\Vert_{p}^{p}=c_{d}^{p}d\omega_{d}\int_{0}^{1}r^{d-1-p(d-1)}%
dr=\frac{c_{d}^{p}d\omega_{d}}{d-p(d-1)},\text{ if }1<p<\frac{d}{d-1}.
\]
The integral diverges at $p=d/(d-1)$. Hence $s=1-1/p$ ranges over $(0,1/d)$.
So the bound of $L^{p}$-norm of $f$ depends on the dimension. Applying
Theorem~\ref{thm:compact-maps}, we get a stability with the norm exponent
$s/\left[  2\left(  s+1\right)  \right]  \triangleq q$ ranges over
$0<q<1/[2(d+1)]$. Clearly the exponent obtained this way depends on the
dimension. Since
\[
\frac{1}{2(d+1)}-\frac{1}{6d}=\frac{2d-1}{6d(d+1)}>0,
\]
hence any exponent $q$ strictly between these two numbers give rise to an
improve of \cite[Theorem 1.11 (1.15)]{letrouit2026gluing}. Next we show that
how to obtain a dimension-free exponent.

\emph{A stronger estimate from shrinking shells}. The integrated oscillation
estimate $\left(  \ref{eq:osc-int}\right)  $ retains more information than the
$L^{p}$ bound. Let $d\geqslant1$, $0<h\leqslant1$ and set
\[
N=\lfloor\log_{2}(1/h)\rfloor\text{ and }r_{j}=2^{-j},\text{ }j\geqslant0.
\]
Since $\log_{2}(1/h)-1<N\leqslant\log_{2}(1/h),$ we have $h\leqslant r_{N}%
<2h$. Let $A_{j}=\left\{  r_{j+1}<\left\vert x\right\vert \leqslant
r_{j}\right\}  ,$ then $A_{j}\subset B\left(  0,r_{j}\right)  $ and%
\[
B\left(  0,1\right)  =B\left(  0,r_{N}\right)  \cup\left(  \bigcup_{0\leqslant
j<N}A_{j}\right)  .
\]
For each $1$-Lipschitz convex $v$, we use $\left(  \ref{eq:omega-2L}\right)  $
and $\left(  \ref{eq:LM-f-spherical-integral}\right)  $ to get the integrated
oscillation on $B\left(  0,r_{N}\right)  $,%
\[
\int_{B(0,r_{N})}\omega_{v,h}^{2}fdx\leqslant4r_{N}\leqslant8h.
\]
On each $A_{j}$ with $0\leqslant j<N,$ the density definition yields
$f\leqslant c_{d}r_{j+1}^{1-d}$. So using $\left(  \ref{eq:osc-int}\right)  $
with $r=2$ and $S=r_{j}$, we obtain
\[
\int_{A_{j}}\omega_{v,h}^{2}fdx\leqslant c_{d}r_{j+1}^{1-d}\int_{B(0,r_{j}%
)}\omega_{v,h}^{2}dx\leqslant c_{d}K_{d,2}^{\mathrm{osc}}r_{j+1}^{1-d}%
r_{j}^{d-1}h=c_{d}2^{d-1}K_{d,2}^{\mathrm{osc}}h.
\]
Taking the supremum over $v$, we obtain for $0<h\leqslant1$,
\begin{equation}
\mathfrak{A}_{\rho}\left(  h\right)  \leqslant8h+Nc_{d}2^{d-1}K_{d,2}%
^{\mathrm{osc}}h\leqslant B_{d}h\log\left(  e/h\right)  \leqslant B_{d}%
h\log\left(  c/h\right)  , \label{eq:LM-spherical-A}%
\end{equation}
where $B_{d}=8+c_{d}2^{d-1}K_{d,2}^{\mathrm{osc}}\left(  \log2\right)  ^{-1}$
and $c=e\max\left\{  1,2R_{Y}\right\}  .$ To prove the map stability, let
$\gamma$ be optimal for $W_{1}\left(  \mu,\nu\right)  $. The source is support
in $B\left(  0,R_{X}\right)  $ with $R_{X}=1$, so $\mathcal{D}_{\nu}%
(\sigma_{\gamma})\leqslant2\delta$. As in the proof of Corollary
\ref{cor:compact-target}\ the tail term there vanishes for compact target with
the choice $R=R_{Y}.$ Substituting $\left(  \ref{eq:LM-spherical-A}\right)  $
into $\left(  \ref{eq:error-bound}\right)  $ and substituting the resulting
inequality into $\left(  \ref{eq:map-graph}\right)  $, we get for
$0<h\leqslant1,$
\begin{equation}
\Vert T_{\mu}-T_{\nu}\Vert_{2}^{2}\leqslant4R_{Y}\delta+16R_{Y}\delta
/h+36R_{Y}^{2}B_{d}h\log\left(  c/h\right)  . \label{eq:LM-T2-bound-pre}%
\end{equation}
For $0<\delta\leqslant1$, let $\ell=\log\left(  c/\delta\right)  .$ Then
$\ell>1$ and $\ell>\delta.$ We now choose $h=\left(  \delta/\ell\right)
^{1/2}$, this $h$ belongs to $(0,1]$. By direct calculations we have%
\[
\log(c/h)=\frac{2\log c+\log\ell-\log\delta}{2}=\frac{2\log c+\log\ell
-\log\left(  c/e^{\ell}\right)  }{2}=\frac{\ell+\log c+\log\ell}{2}%
\leqslant\left(  1+e^{-1}\right)  \ell,
\]
where the inequality uses $\log\ell\leqslant\left(  1+e^{-1}\right)  \ell-\log
c$. We also have%
\[
\delta\leqslant\sqrt{\delta\ell},\text{ }\delta/h=\sqrt{\delta\ell},\text{
}h\log(c/h)\leqslant\left(  1+e^{-1}\right)  h\ell=\left(  1+e^{-1}\right)
\sqrt{\delta\ell}.
\]
Substituting this $h$ into $\left(  \ref{eq:LM-T2-bound-pre}\right)
$\ therefore gives,
\begin{equation}
\Vert T_{\mu}-T_{\nu}\Vert_{2}\leqslant\left(  1+e^{-1}\right)  ^{1/2}%
(20R_{Y}+36R_{Y}^{2}B_{d})^{1/2}\delta^{1/4}[\log(c/\delta)]^{1/4}.
\label{eq:LM-spherical-map}%
\end{equation}
For $\delta=0$, the map error is zero. For $1<\delta\leqslant2R_{Y}$ ($\delta$
cannot exceed $2R_{Y}$), the chosen $c$ has $\ell\geqslant1$. So%
\[
\Vert T_{\mu}-T_{\nu}\Vert_{2}\leqslant2R_{Y}\leqslant2R_{Y}\delta^{1/4}%
[\log(c/\delta)]^{1/4}.
\]
Therefore for all admissible $\delta\geqslant0,$ we have the modulus
$\delta^{1/4}[\log(c/\delta)]^{1/4}$. Since $1/4-1/(6d)=\left(  3d-2\right)
/\left(  12d\right)  >0$ for all $d\geqslant1$, so the modulus of $\left(
\ref{eq:LM-spherical-map}\right)  $ improves the cited exponent $1/(6d)$ for
all $d\geqslant1$ in the small $\delta$ range.

\emph{The even stronger one-dimensional case}. When $d=1$, $\left(
\ref{eq:LM-spherical-density}\right)  $ is uniform on $(-1,1)$ with bounded
density $1/2$, and $\left(  \ref{eq:LM-John-improved}\right)  $ gives $1/4$
without a logarithm. In fact all three comparisons from Section
\ref{sec:LM-John}, Section \ref{sec:LM-boundary} to Section
\ref{sec:LM-spherical} have a stronger one-dimensional conclusion. As proved
in Section~\ref{sec:sharp-1D}, the common source increasing maps induce the
monotone $W_{1}$-optimal target coupling. For targets in $B(b,R_{Y})$,
\[
\Vert T_{\mu}-T_{\nu}\Vert_{2}^{2}=\int|T_{\mu}-T_{\nu}|^{2}d\rho
\leqslant2R_{Y}\int|T_{\mu}-T_{\nu}|\,d\rho=2R_{Y}\delta.
\]
Thus the norm exponent is $1/2$ in dimension one, for any absolutely
continuous source and compactly supported targets.

\section{From objective gap to map accuracy}

\label{sec:gap-map-error}

Dual-based algorithms for optimal transport usually use primal-dual gap as
stopping criterion. An approximate optimal map is then constructed, e.g.,
through barycentric projection. The purpose of this section is to demonstrate
turning a primal-dual gap into an error bound for barycentric projection map.
Let $\Psi_{\rho,\nu}(z)$ denote the infimum on the right of $\left(
\ref{eq:error-bound}\right)  $, with objective gap replaced by $z\geqslant0$.
It is nondecreasing, since every expression being minimized is nondecreasing.

\begin{corollary}
\label{cor:primal-dual}Fix $\rho,\nu$ satisfying Assumption~\ref{ass:brenier},
a plan $\sigma\in\Pi(\rho,\nu)$, and integrable functions $\phi\in L^{1}%
(\rho)$, $\psi\in L^{1}(\nu)$ such that $\phi(x)+\psi(y)\leqslant\left\vert
x-y\right\vert ^{2}/2$ for every $x,y$. Let
\[
\epsilon=\frac{1}{2}\int|x-y|^{2}\,d\sigma-\int\phi\,d\rho-\int\psi\,d\nu
\]
be the primal-dual gap. Then $0\leqslant\mathcal{D}_{\nu}(\sigma
)\leqslant\epsilon$ and
\[
\left\Vert \bar{T}_{\sigma}-T_{\nu}\right\Vert _{2}^{2}+V_{\sigma}%
=\mathcal{E}_{\nu}\left(  \sigma\right)  \leqslant\Psi_{\rho,\nu}\left(
\epsilon\right)  .
\]

\end{corollary}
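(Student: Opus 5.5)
The plan has three steps: show that weak duality pins $\mathcal{D}_{\nu}(\sigma)$ between $0$ and $\epsilon$, then invoke Theorem~\ref{thm:error-bound} together with the monotonicity of $\Psi_{\rho,\nu}$, and finally use the variance decomposition $\left(\ref{eq:variance-decomposition}\right)$ for the identity on the left.

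\textbf{Step 1: the bound $0\leqslant\mathcal{D}_{\nu}(\sigma)\leqslant\epsilon$.} Nonnegativity holds because $\sigma\in\Pi(\rho,\nu)$ is feasible, so its cost is at least $\frac12 W_2^2(\rho,\nu)$. For the upper bound I will show $\int\phi\,d\rho+\int\psi\,d\nu\leqslant\frac12W_2^2(\rho,\nu)$.

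\begin{itemize}
\item Let $\pi$ be the optimal plan $(\operatorname{Id},T_\nu)_{\#}\rho$ from Assumption~\ref{ass:brenier}.
\item Integrate the pointwise constraint $\phi(x)+\psi(y)\leqslant|x-y|^2/2$ against $\pi$.
\item Since $\phi\in L^1(\rho)$ and $\psi\in L^1(\nu)$, the function $\phi(x)+\psi(y)$ is $\pi$-integrable, and its integral splits into $\int\phi\,d\rho+\int\psi\,d\nu$ by the marginal conditions.
\item The right-hand side integrates to $\frac12W_2^2(\rho,\nu)$, which is finite because $\rho,\nu\in\mathcal{P}_2$.
\end{itemize}

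Subtracting this inequality from the definition of $\epsilon$ gives $\mathcal{D}_{\nu}(\sigma)\leqslant\epsilon$.

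\textbf{Step 2: monotonicity.} Theorem~\ref{thm:error-bound} gives $\mathcal{E}_{\nu}(\sigma)\leqslant\Psi_{\rho,\nu}(\mathcal{D}_{\nu}(\sigma))$. Each expression
\[
\frac{4R}{h}z+18R^2\mathfrak{A}_\rho(h)+\mathfrak{T}_{\nu,b}(R)
\]
is nondecreasing in $z$, since $4R/h>0$. An infimum of nondecreasing functions is nondecreasing, so $\Psi_{\rho,\nu}(\mathcal{D}_\nu(\sigma))\leqslant\Psi_{\rho,\nu}(\epsilon)$.

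\textbf{Step 3: the identity.} The equality $\|\bar T_\sigma-T_\nu\|_2^2+V_\sigma=\mathcal{E}_\nu(\sigma)$ is exactly $\left(\ref{eq:variance-decomposition}\right)$. Its cross term vanishes by the definition of the barycenter, provided the conditional kernels have finite first moments. This holds for $\rho$-a.e. $x$ because $\nu\in\mathcal{P}_2$, so $\int|y|^2\,d\sigma<\infty$ and hence $\int|y|^2\,d\sigma_x<\infty$ for $\rho$-a.e. $x$.

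The only point needing care is the integrability used in Step 1 to split $\int(\phi+\psi)\,d\pi$ into the two marginal integrals. This is immediate from $\phi\in L^1(\rho)$ and $\psi\in L^1(\nu)$, so I do not expect a real obstacle.
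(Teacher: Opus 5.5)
Your proposal is correct and follows the paper's proof, which uses the same three ingredients: Theorem~\ref{thm:error-bound}, the monotonicity of $\Psi_{\rho,\nu}$, and the variance decomposition. Your Step~1 integrates the dual constraint $\phi(x)+\psi(y)\leqslant|x-y|^2/2$ against an optimal plan to get $\int\phi\,d\rho+\int\psi\,d\nu\leqslant\frac12W_2^2(\rho,\nu)$, hence $\mathcal{D}_\nu(\sigma)\leqslant\epsilon$; this simply makes explicit the weak-duality step the paper leaves implicit.
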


\begin{proof}
It is a result of Theorem~\ref{thm:error-bound}, the monotonicity of
$\Psi_{\rho,\nu}$ and $\left(  \ref{eq:variance-decomposition}\right)  $.
\end{proof}

\begin{corollary}
\label{cor:barycentric-proj}Suppose the setting of
Corollary~\ref{cor:primal-dual} holds for a family $\sigma_{t}\in\Pi(\rho
,\nu)$ of plans with the corresponding primal-dual gaps $\epsilon_{t}\leqslant
C_{0}t^{\zeta}$, where $C_{0},\zeta>0$. If $\Psi_{\rho,\nu}(z)\leqslant
Kz^{\Theta}$ for $z\geqslant0$, then
\[
\Vert\bar{T}_{\sigma_{t}}-T_{\nu}\Vert_{2}\leqslant K^{1/2}C_{0}^{\Theta
/2}t^{\zeta\Theta/2}\text{ and }V_{\sigma_{t}}\leqslant KC_{0}^{\Theta
}t^{\zeta\Theta}.
\]
For example, under Theorem~\ref{thm:unbounded-power} take $\Theta=\Theta
_{\eta,m}$ and $K=Q_{d,\rho,p,n}$. For the logarithmic class, replace
$Kz^{\Theta}$ by $C_{\mathrm{up}}\varphi(z)$ on $0<z\leqslant t_{\mathrm{up}}%
$, as in $\left(  \ref{eq:explicit-upper-constant}\right)  $.
\end{corollary}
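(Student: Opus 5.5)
The argument is a direct substitution, with one monotonicity remark doing most of the work. Apply Corollary~\ref{cor:primal-dual} to each plan $\sigma_t$. It gives $\mathcal{D}_\nu(\sigma_t)\leqslant\epsilon_t$ and
\[
\Vert\bar T_{\sigma_t}-T_\nu\Vert_2^2+V_{\sigma_t}=\mathcal{E}_\nu(\sigma_t)\leqslant\Psi_{\rho,\nu}(\epsilon_t).
\]

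Next I would bound the right-hand side using the monotonicity of $\Psi_{\rho,\nu}$. The function $\Psi_{\rho,\nu}$ is nondecreasing, being an infimum of expressions each nondecreasing in the gap variable. Hence $\epsilon_t\leqslant C_0t^\zeta$ gives
\[
\Psi_{\rho,\nu}(\epsilon_t)\leqslant\Psi_{\rho,\nu}(C_0t^\zeta)\leqslant K C_0^{\Theta}t^{\zeta\Theta},
\]
where the last step applies the hypothesis $\Psi_{\rho,\nu}(z)\leqslant Kz^\Theta$ at $z=C_0t^\zeta$. Alternatively, one can skip the monotonicity and apply the power bound directly at $z=\epsilon_t$, using that $z\mapsto z^\Theta$ is increasing because $\Theta>0$.

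Both summands on the left are nonnegative, so each is bounded separately by $KC_0^\Theta t^{\zeta\Theta}$. Taking the square root of the bound on the barycentric term gives $\Vert\bar T_{\sigma_t}-T_\nu\Vert_2\leqslant K^{1/2}C_0^{\Theta/2}t^{\zeta\Theta/2}$, and the variance bound is read off directly.

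For the examples in the statement:
\begin{itemize}
\item Under Theorem~\ref{thm:unbounded-power}, the bound $\Psi_{\rho,\nu}(z)\leqslant Q z^{\Theta_{\eta,m}}$ holds for all $z\geqslant0$. This is because the proof of that theorem bounds exactly the infimum defining $\Psi$ with $\mathcal{D}$ replaced by an arbitrary $z$; the case $z=0$ is handled by the limiting argument given there.
\item For the logarithmic class, the same chain works with $C_{\mathrm{up}}\varphi(\cdot)$ from (\ref{eq:explicit-upper-constant}), provided $C_0t^\zeta\leqslant t_{\mathrm{up}}$. The bound $\Psi_{\rho,\nu}\leqslant C_{\mathrm{up}}\varphi$ on $(0,t_{\mathrm{up}}]$ follows from Steps 1--4 of the proof of Theorem~\ref{thm:class-law}, which bound $\Psi$ pointwise in $z$.
\end{itemize}

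Monotonicity of $\varphi$ near $0$ is needed to pass from $\epsilon_t$ to $C_0t^\zeta$. It can be checked quickly from $\varphi'(t)>0$ for small $t$, or avoided by applying the bound at $\epsilon_t$ itself.

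There is no real obstacle. The only point needing care is confirming that the cited theorems control $\Psi_{\rho,\nu}$ as a function of $z$, not only at $z=\mathcal{D}_\nu(\sigma)$. This holds because their proofs optimize the right-hand side of (\ref{eq:error-bound}) with the gap treated as a free parameter.
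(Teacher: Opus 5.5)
Your proof is correct and is essentially the paper's argument made explicit. Corollary~\ref{cor:primal-dual}, the monotonicity of $\Psi_{\rho,\nu}$ and the assumed power bound give $\mathcal{E}_\nu(\sigma_t)\leqslant KC_0^{\Theta}t^{\zeta\Theta}$, and, as in the paper, the examples rest on the proofs of \ref{thm:unbounded-power} and \ref{thm:class-law} bounding $\Psi_{\rho,\nu}$ pointwise in the gap variable.

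One small simplification in the logarithmic case: you do not need monotonicity of $\varphi$, because $\Psi_{\rho,\nu}(\epsilon_t)\leqslant\Psi_{\rho,\nu}(C_0t^{\zeta})\leqslant C_{\mathrm{up}}\varphi(C_0t^{\zeta})$ whenever $C_0t^{\zeta}\leqslant t_{\mathrm{up}}$. This route also avoids evaluating $\varphi$ at $\epsilon_t=0$, where it is undefined, which your proposed alternative of applying the bound at $\epsilon_t$ itself would hit.
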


\begin{proof}
The bounds follow from the proof of Theorem~\ref{thm:unbounded-power} and
Section~\ref{sec:class-proof}.
\end{proof}

\section{An optimization of sum of powers}

\begin{lemma}
\label{lem:two-power}Let $A,B,\alpha,\beta>0$. Then
\begin{equation}
\inf_{t>0}\left\{  At^{\alpha}+Bt^{-\beta}\right\}  =c_{\alpha,\beta}%
A^{\beta/\left(  \alpha+\beta\right)  }B^{\alpha/\left(  \alpha+\beta\right)
}, \label{eq:two-power}%
\end{equation}%
\begin{equation}
c_{\alpha,\beta}=\left(  \frac{\beta}{\alpha}\right)  ^{\alpha/\left(
\alpha+\beta\right)  }+\left(  \frac{\alpha}{\beta}\right)  ^{\beta/\left(
\alpha+\beta\right)  }. \label{eq:two-power-constant}%
\end{equation}
The unique minimizer is
\begin{equation}
t_{\ast}=\left(  \frac{\beta B}{\alpha A}\right)  ^{1/\left(  \alpha
+\beta\right)  }. \label{eq:two-power-minimizer}%
\end{equation}

\end{lemma}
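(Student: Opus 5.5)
The approach is to treat $g(t)=At^{\alpha}+Bt^{-\beta}$ on $(0,\infty)$ by elementary calculus. I will show that it has a unique critical point, that this point is a global minimizer, and then evaluate $g$ there.

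First, $g$ is smooth with
\[
g'(t)=\alpha At^{\alpha-1}-\beta Bt^{-\beta-1}=t^{-\beta-1}\left(\alpha At^{\alpha+\beta}-\beta B\right).
\]
Since $\alpha,\beta,A,B>0$, the factor $\alpha At^{\alpha+\beta}-\beta B$ is strictly increasing in $t$. It vanishes exactly at $t_{\ast}$ from $\left(\ref{eq:two-power-minimizer}\right)$, is negative before $t_{\ast}$ and positive after it. So $g$ strictly decreases on $(0,t_{\ast})$ and strictly increases on $(t_{\ast},\infty)$. This proves that $t_{\ast}$ is the unique minimizer and that the infimum is attained.

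Second, I evaluate $g(t_{\ast})$. Write $\gamma=\alpha+\beta$. Then
\[
t_{\ast}^{\alpha}=\left(\beta/\alpha\right)^{\alpha/\gamma}B^{\alpha/\gamma}A^{-\alpha/\gamma},
\qquad
t_{\ast}^{-\beta}=\left(\alpha/\beta\right)^{\beta/\gamma}A^{\beta/\gamma}B^{-\beta/\gamma}.
\]
Hence
\[
At_{\ast}^{\alpha}=\left(\beta/\alpha\right)^{\alpha/\gamma}A^{\beta/\gamma}B^{\alpha/\gamma},
\qquad
Bt_{\ast}^{-\beta}=\left(\alpha/\beta\right)^{\beta/\gamma}A^{\beta/\gamma}B^{\alpha/\gamma}.
\]
These use $1-\alpha/\gamma=\beta/\gamma$ and $1-\beta/\gamma=\alpha/\gamma$. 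Adding the two terms gives exactly $c_{\alpha,\beta}A^{\beta/\gamma}B^{\alpha/\gamma}$, which is $\left(\ref{eq:two-power}\right)$ with the constant $\left(\ref{eq:two-power-constant}\right)$.

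There is no real obstacle; the only care needed is the exponent bookkeeping in the second step.
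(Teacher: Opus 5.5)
Your proposal is correct and follows essentially the same route as the paper: differentiate, locate the unique critical point $t_{\ast}$, and add the two evaluated terms. The only difference is that you certify global minimality through the sign of $g'$ on either side of $t_{\ast}$, where the paper uses $\Phi(t)\to+\infty$ at both endpoints together with uniqueness of the critical point; both arguments are equally valid here.
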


\begin{proof}
For $\Phi\left(  t\right)  =At^{\alpha}+Bt^{-\beta}$,
\[
\Phi^{\prime}\left(  t\right)  =\alpha At^{\alpha-1}-\beta Bt^{-\beta-1}.
\]
Thus $\Phi^{\prime}\left(  t\right)  =0$ if and only if
\[
\alpha At^{\alpha+\beta}=\beta B,
\]
which gives $\left(  \ref{eq:two-power-minimizer}\right)  $. Since
$\Phi(t)\rightarrow+\infty$ at both endpoints $0$ and $+\infty$, this critical
point is the global minimizer. Substitution gives
\begin{align*}
At_{\ast}^{\alpha}  &  =A\left(  \frac{\beta B}{\alpha A}\right)
^{\alpha/(\alpha+\beta)}=\left(  \frac{\beta}{\alpha}\right)  ^{\alpha
/(\alpha+\beta)}A^{\beta/(\alpha+\beta)}B^{\alpha/(\alpha+\beta)},\\
Bt_{\ast}^{-\beta}  &  =B\left(  \frac{\beta B}{\alpha A}\right)
^{-\beta/(\alpha+\beta)}=\left(  \frac{\alpha}{\beta}\right)  ^{\beta
/(\alpha+\beta)}A^{\beta/(\alpha+\beta)}B^{\alpha/(\alpha+\beta)}.
\end{align*}
Adding these two identities proves $\left(  \ref{eq:two-power}\right)  $.
\end{proof}

\bibliographystyle{plain}
\bibliography{main}
\end{document}